\documentclass[11pt]{amsart}
\usepackage{amsfonts, bm}
\usepackage{mathrsfs}
\allowdisplaybreaks
\usepackage{hyperref}
\usepackage{color}
\usepackage{cite,enumerate}
\makeatletter

\newtheorem{theorem}{Theorem}[section]
\newtheorem{lemma}[theorem]{Lemma}

\newtheorem{proposition}[theorem]{Proposition}
\numberwithin{equation}{section}

\allowdisplaybreaks \numberwithin{equation}{section}

\begin{document}
\title[Precise propagation profile for   monostable free boundary problems]{Precise propagation profile for some monostable free boundary problems in time-periodic media}
\author[Y. Du, Z. Ma and Z.-C. Wang ]{Yihong Du$^\dag$, Zhuo Ma$^\dag$ and Zhi-Cheng Wang$^\ddag$}
 \thanks{
 \mbox{$^{\dag}$ School of Science and Technology, University of New England, Armidale, NSW 2351, Australia.} \\
 \mbox{\ \ \ \ \  $^{\ddag}$ School of Mathematics and Statistics, Lanzhou University, Lanzhou, Gansu 730000, China.} \\
\mbox{\ \ \ \ \ \ \    Emails:}  ydu@turing.une.edu.au (Y. Du),\ mazh25@foxmail.com (Z. Ma), \ wangzhch@lzu.edu.cn (Z.-C. Wang).
}
\date{\today}
\maketitle 
\begin{abstract}
  We consider  reaction-diffusion equations of the form
\begin{equation*}
u_t - d u_{xx} = f(t,u), \quad t>0,\ \ x \in [g(t), h(t)],
\end{equation*}
where $f(t,u)$ is periodic in $t$ and monostable in $u$, and the interval $[g(t), h(t)]$ represents the one dimensional population range of a species with density $u(t,x)$ at time $t$ and spatial location $x$.  The free boundaries $x=g(t)$ and $x=h(t)$ evolve subject to a ``preferred population density" condition at the habitat edges.
Analogous to the traveling wave solutions in the corresponding Cauchy problem, semi-wave solutions play a fundamental role in understanding the propagation phenomena governed by the  free boundary problem here.  But in contrast to the  Cauchy problem, where the KPP condition plays a subtle role in the precise approximation of its solution (with compactly supported initial function) by the traveling wave solution with minimal speed, here we  prove the existence and uniqueness of a semi-wave in a general monostable setting,  and  obtain a precise description of the convergence of the solution toward the semi-wave as time goes to infinity, where the KPP condition plays no special role. Previously,  such a sharp result was proved for a free boundary model only when $f$ is autonomous ($f=f(u)$, see \cite{D} or \cite{DL15} for a related free boundary model), or  a less precise result was obtained in the time-periodic case under an extra strong KPP condition on $f$ (see \cite{MDW}, or \cite{DGP} for a related free boundary model). This work appears to be the first to prove the sharp convergence result for a general monostable free boundary problem in a heterogeneous environment,
and we believe the methods developed here should have applications to related free boundary problems in heterogeneous media with nonlinearities more general than those of KPP type.

\bigskip
\textbf{Keywords}: Time-periodic media, free boundary,   sharp estimate, semi-wave solution

\medskip 
\textbf{AMS Subject Classification}: 35B40, 35K55, 35R35
\end{abstract}

\section{Introduction and main results}
The mathematical investigation of species expansion through reaction-diffusion models can be traced back to the classical works of Fisher \cite{Fisher} and Kolmogorov-Petrovskii-Piskunov (KPP) \cite{KPP}, who independently introduced what is now known as the Fisher-KPP equation
\begin{equation}\label{eq-kpp}
  u_t - d u_{xx} = f(u), \qquad  t>0,\ x\in\mathbb R
\end{equation}
with $f$ satisfying 
\[
f(0)=f(1)=0,~ f(u)>0 \text{ for } u\in(0,1), f(u)<0 \text{ for } u\in(1,\infty) \text{ and }f(u) \leq f'(0)u \text{ for } u>0.
\]
It was shown that \eqref{eq-kpp} admits traveling wave solutions of the form \( u(t,x) = \Phi(x - ct) \) if and only if \( c \geq c^* \), where \( c_* =2\sqrt{d\,f'(0)}\) is called the minimal speed. 
Moreover, \( c_* \) coincides with the asymptotic spreading speed of solutions with compactly supported initial data. These results laid the foundation for the modern theory of propagation in reaction-diffusion equations, including extensive works on a broad range of problems with various nonlinearities and boundary conditions (see, e.g., \cite{DL10, DL15,DN,S51, AW78, DM, zlatos, KMY, W82, fife77, DMZ,BLM,FLWW,CDLL,DHL}).

Classical autonomous models of the form \eqref{eq-kpp} provide a basic framework for studying reaction-diffusion phenomena. In the real world, however, periodic factors such as seasonal changes, diurnal cycles, and tidal rhythms modulate key population parameters, including diffusion and growth rates, carrying capacities, etc. Introducing temporal periodicity is therefore important for capturing realistic ecological variability and improving predictive accuracy.  Such temporal heterogeneity has duly  been taken into account in many works on propagation dynamics in reaction-diffusion systems (see, e.g., \cite{SLZ, DDL19, DGP,  Xin, ABC, FZ, wein02, shen11, N099,LLS,W16}). It should be noted that mathematically, such explicit time dependence gives rise to substantial analytical difficulties: the loss of autonomy renders classical techniques — such as phase-plane analysis and Lyapunov function methods — inapplicable, thereby compelling the development of new analytical tools.

 In this paper, we are concerned with  time-periodic free boundary problems of the form 
\begin{equation}\label{free-bound}
\begin{cases}
  u_t-du_{xx}=f(t,u), &t>0,~g(t)<x<h(t),\\
  u(t,g(t))=u(t,h(t))=\delta, &t>0,\\
  g'(t)=-\frac{d}{\delta}u_x(t,g(t)),&t>0,\\
  h'(t)=-\frac{d}{\delta}u_x(t,h(t)),&t>0,\\
  -g(0)=h(0)=h_0, u(0,x)=u_0(x), &-h_0<x<h_0,
  \end{cases}
\end{equation}
where $d>0$ and $h_0>0$ are given constants.  Such a model may be viewed as describing the expansion of some newly introduced or invasive species, whose territorial variation occurs without substantial biological or energetic trade-offs.  Here, $u(t,x)$ represents the population density at time $t$ and position $x$,  while the constant $\delta \in (0,1)$ represents a preferred population density of the species. 

The free boundaries $x = g(t)$ and $x = h(t)$ determine the moving fronts of the one dimensional population range $[g(t), h(t)]$, which evolve according to the equations in the second, third and fourth lines of \eqref{free-bound},
which arise from the biological assumption that the species maintains its preferred density $\delta$ at the range boundary by advancing or retreating the fronts (a detailed derivation  can be found in \cite{D}).  

The nonlinear function \( f:\mathbb{R}\times[0,\infty)\to\mathbb{R} \) is assumed to be
\begin{equation}\label{f-smooth}
\begin{cases}\mbox{\( C^{\alpha/2} \) in \( t\in\mathbb R \) uniformly with respect to \( u\ge0 \) for some \( \alpha\in(0,1) \), }\\
\mbox{\( C^1 \) in \( u\geq 0 \) uniformly with respect to \( t\in\mathbb{R} \).}
\end{cases}
\end{equation}
   Moreover, $f(t,u)$ is time-periodic in $t$ and monostable in $u$, namely:
\begin{enumerate}
  \item[($\rm{\bf{f_p}}$):] (Time-periodic) There exists $T>0$ such that $f(t+T,\cdot)=f(t,\cdot)$ for $t\in\mathbb{R}$;
  \item[($\rm{\bf{f_m}}$):](Monostable) For every fixed $t\in\mathbb R$, $f(t, 0)=f(t,1)=0$, $f(t, u)>0 $ for $u\in(0,1)$, $f(t, u)<0 $ for $u\in(1,+\infty)$, $\partial_uf(t, 0)>0>\partial_uf(t,1)$.
  \end{enumerate}
  
  The initial function $u_0(x)$ is always taken from the set 
 \begin{equation*}
  I(h_0):=\{\psi\in C^2([-h_0,h_0]):\psi(x)>0\text{ in }[-h_0,h_0], \ \psi(\pm h_0)=\delta\}.
 \end{equation*}

 When the nonlinear term $f$ in \eqref{free-bound} is time-independent, it was shown in \cite{D} and \cite{DLNN} that the unique solution $(u,g,h)$ exhibits successful spreading for all admissible initial data in both the monostable case with $\delta \in (0,1)$ and the bistable case (to reflect Allee effect) with $\delta \in (\theta_f^*,1)$, although the population range may temporarily shrink. Here for a bistable $f(u)$, $\theta_f^* \in (0,1)$ is the unique value determined by
\[\int_0^{\theta_f^*}f(s)ds=0,\quad \int_0^uf(s)ds>0 \text{ for }u\in(\theta_f^*,1].\]
In this latter case, the above conclusion indicates that the Allee effect can be suppressed if the species maintains its population density at an appropriate level along the range boundary.  

Moreover, the spreading proved in \cite{D,DLNN} has an asymptotic  speed and profile which can be  uniquely determined by the associated semi-wave solution.

In time-periodic media, under the assumptions ($\rm{\bf{f_p}}$) and  ($\rm{\bf{f_m}}$),    persistent spreading of  \eqref{free-bound} has been established in \cite{MDW}.  Furthermore, if additionally $f$ is of strong KPP type\footnote{This implies the usual KPP condition: $f(t,u)\leq f_u(t,0)u$ for $u\geq 0$.}, namely
\smallskip

  (${\bf{f_{kpp}}}$): $\frac{f(t,u)}{u}$ is nonincreasing in $u$ for all $t\in\mathbb R$,\smallskip
  \\
 then it was shown in \cite{MDW}  that  the associated semi-wave problem 
\begin{equation}\label{eqn-wave}
   \begin{cases}
     \Phi_t-d\Phi_{xx}+k(t)\Phi_x=f(t,\Phi),&t\in[0,T],~x\in(0,\infty),\\
\Phi(t,0)=\delta,&t\in[0,T],\\
\Phi(0,x)=\Phi(T,x), ~\Phi(t,\infty)=1, &t\in[0,T],~x\in[0,\infty)\\
   \end{cases}
  \end{equation}
admits a unique solution pair $(k(t), \Phi(t,x))$ satisfying 
  \begin{equation}\label{eq-phix1}
    \Phi_x(t,0)=\frac{\delta}{d}k(t)\quad  \text{ for }t\in[0,T],
  \end{equation}
  and the solution $(u,g,h)$ of \eqref{free-bound} has the following properties:
 \begin{equation*}
  \begin{cases}
   \lim_{t\to\infty}\frac{h(t)}{t}=\bar k^*:=\frac{1}{T}\int_{0}^{T}k^*(s)ds,\quad \lim_{t\to\infty}\frac{g(t)}{t}=-\bar k^*,\\
  \lim_{t\to\infty}\max_{|x|\leq (\bar{k}^*-\epsilon)t}|u(t,x)-1|=0 \text{ for any small }\epsilon>0.
  \end{cases}
\end{equation*}

For \eqref{free-bound} and related free boundary problems in heterogeneous media (see, e.g., \cite{DGP, DL}), it has been suspected that the KPP type conditions such as ${\bf (f_{kpp})}$ are only technical, but so far no one has been able to remove them in the theorems. 

In this paper we show that for \eqref{free-bound}, the extra condition ${\bf (f_{kpp})}$ indeed can be removed. Moreover, we considerably sharpen the above results on the asymptotic behavior of  $(u, g, h)$.
\smallskip

 Our main results are the following two theorems.

 \begin{theorem}\label{theo-semi-wave} Assume that  \eqref{f-smooth}, ${\bf (f_p)} $  and ${\bf (f_m)}$ hold.  Then for any given $\delta\in(0,1)$, there exists a unique positive, continuous and  $T$-periodic function  $k^*(t)=k_\delta^*(t)$  such that  problem \eqref{eqn-wave} has a  solution  $\Phi^*(t,x)$ satisfying \eqref{eq-phix1}. Moreover, such a solution $\Phi^*$ is unique and satisfies $\Phi_x^*(t,x)>0$ for $t\in[0,T]$ and  $x\in[0,\infty)$.
\end{theorem}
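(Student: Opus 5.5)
We plan to obtain the semi-wave through a one-parameter family of periodic half-line problems with a prescribed advection, followed by a fixed-point/shooting argument for the free boundary condition \eqref{eq-phix1}. For a given positive continuous $T$-periodic $k(t)$, we first solve
\[
\Phi_t-d\Phi_{xx}+k(t)\Phi_x=f(t,\Phi),\quad \Phi(t,0)=\delta,\quad \Phi(t+T,x)=\Phi(t,x),\quad \Phi(t,\infty)=1 .
\]
The construction uses the monotone iteration / sub-supersolution method on bounded intervals $[0,L]$ with $\Phi(t,L)=1$, and then lets $L\to\infty$.

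The supersolution is $1$. For a subsolution we use a small periodic compactly supported bump built from the principal periodic eigenfunction of $\partial_t-d\partial_{xx}+k(t)\partial_x-\partial_uf(t,0)$ on a long interval. This eigenvalue is negative for large intervals because the advection can be removed by the Galilean change $y=x-\int_0^tk$ up to a periodic correction, and $\overline{\partial_uf(\cdot,0)}>0$. We also use $\max\{\delta\text{-profile},\ \text{bump}\}$. Monotonicity in $x$ follows by sliding/comparing $\Phi(t,x+s)$ with $\Phi(t,x)$, using that the iteration starting from $1$ gives a decreasing sequence of $x$-increasing functions. Uniqueness of $\Phi=\Phi_k$ for fixed $k$ comes from a sliding argument combined with $\partial_uf(t,1)<0$ near $x=\infty$. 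The strong maximum principle then gives $\Phi_x>0$, and the Hopf lemma at $x=0$ strictly.

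Next we study the map $k\mapsto \Phi_k$ and the boundary flux $\Phi_{k,x}(t,0)$. Increasing $k$ (pointwise, $k_1\le k_2$, $k_1\not\equiv k_2$) makes $\Phi_{k_2}\le\Phi_{k_1}$, because $\Phi_{k_1}$ is a supersolution of the $k_2$ problem (as $\Phi_x>0$). Hence $\Phi_{k,x}(t,0)$ is nonincreasing in $k$. Since $k$ is a whole function and \eqref{eq-phix1} is a pointwise-in-time identity, we cannot simply shoot in a scalar. Instead we set up the map
\[
\mathcal{F}(k)(t):=\frac{d}{\delta}\Phi_{k,x}(t,0)
\]
on the cone of positive continuous $T$-periodic functions, and seek a fixed point $k=\mathcal{F}(k)$. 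The operator $\mathcal{F}$ is order-reversing, so we instead run an alternating scheme. Alternatively, and this is our preferred route, we obtain $k^*$ dynamically: starting with the solution $(u,g,h)$ of \eqref{free-bound} (spreading is known from \cite{MDW}), we look at the front $h(t+nT)$ and at $u(t+nT,h(t+nT)-x)$ for $n\to\infty$. Parabolic estimates give subsequential limits, which are entire solutions on a half line with free boundary speed $k_\infty(t)$.

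To show that the limit is time-periodic, we use a zero-number / Lyapunov-type comparison of $u(t,x)$ with its time-$T$ shift in the moving frame. Given such an entire limit $(k_\infty,\Phi_\infty)$, we show it is $T$-periodic by squeezing it between shifted copies and using the sliding method. The nondegeneracy $\partial_uf(t,1)<0$ gives exponential convergence to $1$ at infinity, which keeps the sliding argument effective.

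Uniqueness of $(k^*,\Phi^*)$ is argued directly. Suppose $(k_1,\Phi_1)$ and $(k_2,\Phi_2)$ both solve \eqref{eqn-wave} and \eqref{eq-phix1}. Pass to moving coordinates $X_i(t)=\int_0^tk_i$, so that $\Phi_i$ become solutions of the same equation $w_t=dw_{xx}+f(t,w)$ on $x>-X_i(t)$ with the free boundary condition of \eqref{free-bound}. Shift one solution far to the right so that it lies below the other, namely $\Phi_1(t,x-X_1(t)-\sigma)\le\Phi_2(t,x-X_2(t))$ on the common domain, with ordered boundaries. Then decrease $\sigma$ to the critical value $\sigma^*$. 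The comparison principle for the free boundary problem, together with the strong maximum principle and Hopf lemma at the contact point (boundary or interior), shows the two coincide at $\sigma^*$. Hence $k_1=k_2$ and $\Phi_1=\Phi_2$. The drift $X_1-X_2$ is periodic-plus-linear, so if $\bar k_1\ne\bar k_2$ the ordering eventually fails, and this forces $\bar k_1=\bar k_2$ before the sliding step.

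The main obstacle is existence, specifically producing a $T$-periodic limit without the $\mathbf{(f_{kpp})}$ monotonicity, which in \cite{MDW} gave uniqueness of positive periodic solutions and a clean monotone structure. Our first attempt is the dynamical route combined with the uniqueness-by-sliding argument: any two entire limits must coincide up to translation. Periodicity of the front then follows by applying this to the limit and its $T$-shift.
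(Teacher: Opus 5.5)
There is a genuine gap in the existence part. Your preferred ``dynamical route'' needs the limits of $u(t+nT,h(t+nT)-\cdot)$ to be $T$-periodic. That is a Liouville-type classification of entire solutions of the half-line free boundary problem, and nothing in your plan provides it.

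The sliding argument you sketch for uniqueness closes only because both profiles are periodic. One compares over a single period and returns to the starting time, and one uses a time at which the relative shift $\int_0^t(k_2-k_1)$ is extremal. For an entire, a priori non-periodic limit there is no period to close the loop and no compactness in $t$ to produce a touching point. There is also no Lyapunov functional in this non-autonomous setting. The zero-number comparison of $u$ with its $T$-shift, which has a different free boundary, is not set up.

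In the paper, limits of exactly this type are identified only in Section 4 (Lemma~\ref{lem-lim}, Lemma~\ref{lemm-gr} and the proposition after it). This is done by comparison with the already constructed $\Phi^*$, along a time sequence at which the relative front position attains its $\liminf$. So your route is circular. The alternating scheme for the order-reversing map $\mathcal F$ does not close either: it gives $k_-\le k_+$ with $\mathcal F(k_-)=k_+$ and $\mathcal F(k_+)=k_-$, a $2$-cycle rather than a fixed point.

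The route you abandoned is the one that works, and it is what the paper uses. By \cite[Lemma 3.4]{MDW} the maximal solution $\Phi^k$ exists and is strictly decreasing in $k$. The only ingredient that previously required ${\bf (f_{kpp})}$ is uniqueness of the positive solution of \eqref{eqn-wave} for fixed $k$. The paper obtains this from its Case (i) sliding argument with $k_1=k_2=k$, which is exactly the claim you already make. Uniqueness plus parabolic compactness makes $\mathcal F$ continuous. Since $0<\mathcal F(k)\le\mathcal F(0)$ for $k\ge0$, $\mathcal F$ maps $\{k:\ 0\le k\le\mathcal F(0),\ [k]_{C^{\alpha/2}}\le C\}$ into itself for suitable $C$. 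A fixed point then follows (e.g.\ by Schauder), so order-reversal is no obstacle.

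Your uniqueness argument is in the spirit of the paper's: sliding in the moving frame, Hopf at the free boundary at an extremal time of the relative shift, and a separate case $\overline{k_1}\neq\overline{k_2}$. However, the step ``shift one solution far so that it lies below the other'' fails near $x=\infty$. Both profiles tend to $1$, and their tails are not known to be comparable; by Lemma~\ref{lem-1-phi} the decay rate depends on the mean speed. The paper tolerates an error at infinity and removes it with the perturbation $\sigma\rho(t)$, where $\rho$ is built from $\int f_u(s,1)\,ds$ and satisfies $\rho(t_0-T)=1>\rho(t_0)$, working within one period. For $\overline{k_1}>\overline{k_2}$ it also stays within one period, using $l(t_0-T)>0=l(t_0)$ and monotonicity in $x$, rather than forward comparison for all $t>0$. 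Your forward-comparison version can be made rigorous, but only with a perturbed upper solution as in Lemma~\ref{lemma-upp} (adding $qe^{-\sigma_0 t}$ and a bounded front correction).

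Finally, your eigenvalue claim is false for large mean $k$: the Galilean change moves a bounded domain, so the advection cannot be removed. It is also unnecessary. The constant $\delta$ is a subsolution since $f(t,\delta)>0$, and any periodic solution with values in $[\delta,1]$ automatically tends to $1$ as $x\to\infty$.
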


\begin{theorem}\label{thm-exact} Assume that   \eqref{f-smooth}, ${\bf (f_p)} $  and ${\bf (f_m)}$ hold. Let $(k^*,\Phi^*)$ be the semi-wave pair given in Theorem {\rm\ref{theo-semi-wave}} and $(u,g,h)$  the  solution of \eqref{free-bound} with $u_0\in I(h_0) $. Then there exist $g^*$, $h^*\in\mathbb{R}$ $($depending on $u_0)$ such that 
  \begin{eqnarray*}\begin{cases}
    \lim_{t\to\infty}[g(t)+\int_0^tk^*(s)ds]=g^*,~&\lim_{t\to\infty}[g'(t)+k^*(t)]=0,\\
    \lim_{t\to\infty}[h(t)-\int_0^tk^*(s)ds]=h^*,~&\lim_{t\to\infty}[h'(t)-k^*(t)]=0,
    \end{cases}
  \end{eqnarray*}
  \begin{eqnarray*}
    \begin{cases}\lim_{t\to\infty} \sup_{x\in[g(t),0]}|u(t,x)-\Phi^*(t,x-g(t))|=0,\\
    \lim_{t\to\infty} \sup_{x\in[0, h(t)]}|u(t,x)-\Phi^*(t,h(t)-x)|=0.
    \end{cases}
  \end{eqnarray*}
 \end{theorem}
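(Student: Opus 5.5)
We will prove Theorem \ref{thm-exact} by the approach of \cite{D} for the autonomous case, adapted to the time-periodic setting. By symmetry of the argument it suffices to treat the right front $h(t)$; the left front follows by applying the same reasoning to $\tilde u(t,x)=u(t,-x)$. Throughout we write $K(t):=\int_0^t k^*(s)\,ds$ and use the semi-wave $(k^*,\Phi^*)$ of Theorem \ref{theo-semi-wave}, in particular $\Phi^*_x>0$ and $\Phi^*(t,\infty)=1$.

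\textbf{Step 1: spreading and $O(1)$ bounds.} From \cite{MDW} (whose spreading part uses only ${\bf (f_p)}$, ${\bf (f_m)}$) we have spreading and $u\to 1$ locally uniformly. We then build sub- and supersolutions from the semi-wave. For the supersolution we take
\[
\bar h(t)=K(t)+C_1 - M e^{-\gamma t},\qquad \bar u(t,x)=(1+\varepsilon e^{-\gamma t})\,\Phi^*\bigl(t,\bar h(t)-x\bigr).
\]
For the subsolution we take
\[
\underline h(t)=K(t-t_0)-C_2 + M e^{-\gamma t},\qquad \underline u(t,x)=(1-\varepsilon e^{-\gamma t})\,\Phi^*\bigl(t,\underline h(t)-x\bigr),
\]
started at a large time $t_0$ once $u$ is close to $1$ on a long interval. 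The parameters are chosen using $\partial_u f(t,1)<0$ near $u=1$ and the positive lower bound of $\Phi^*_x$ on compact sets near $x=0$. The boundary condition at $x=\underline h$ or $\bar h$ has to be adjusted, since $(1\pm\varepsilon e^{-\gamma t})\Phi^*=\delta$ at a slightly shifted point. We handle this by defining the front through $\Phi^*(t,\cdot)=\delta/(1\pm\varepsilon e^{-\gamma t})$, which is possible since $\Phi^*_x>0$. Because $\gamma>0$, the corrections are summable. The comparison principle for \eqref{free-bound} then gives $|h(t)-K(t)|\le C$ for all large $t$ and $u(t,x)\approx\Phi^*(t,h(t)-x)$ up to $O(\varepsilon)$ plus a bounded shift.

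\textbf{Step 2: convergence along time sequences.} Set $v(t,y)=u(t,h(t)-y)$ and $\eta(t)=h(t)-K(t)$, which is bounded by Step 1. Take $t_n=n_jT\to\infty$ and use parabolic estimates to extract a limit $(V,\eta_\infty)$ defined for all $t\in\mathbb R$. This is an entire solution of the moving-frame problem satisfying
\[
\Phi^*(t,y+a_1)\le V(t,y)\le \Phi^*(t,y+a_2).
\]
Here the bounds come from Step 1 with $\varepsilon$ arbitrary, after re-running the comparison at later starting times. We then prove a Liouville-type result: any such entire solution equals $\Phi^*(t,y)$ and has $\eta_\infty'=0$, so that $\eta_\infty$ is constant. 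To prove it we define $a^-=\sup\{a:\ V\ge \Phi^*(\cdot,\cdot+a)\text{ shifted appropriately}\}$ and argue by sliding. If $a^-<a^+$, the strong maximum principle together with the Hopf lemma at the free boundary, combined with exponential stability near $1$ (from $\partial_uf(t,1)<0$), allows the shift to be improved, giving a contradiction. Autonomy is not needed because all comparisons are done at times in $T\mathbb Z$ and the periodicity of $\Phi^*$ is used.

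\textbf{Step 3: existence of the limit $h^*$.} Step 2 gives $\omega$-limit points of $\eta(nT)$, but we must show that $\eta(t)$ itself converges; this is the main obstacle. We plan a squeezing argument. Given a limit value $\eta_\infty$ attained along $n_jT$, for large $j$ the solution at time $n_jT$ is $\varepsilon$-close to $\Phi^*(\cdot,h-\cdot)$ on a long interval, and close to $1$ elsewhere. Re-applying the Step 1 sub- and supersolutions from time $n_jT$, with $M,\varepsilon$ proportional to this closeness, gives
\[
|\eta(t)-\eta_\infty|\le C\varepsilon\quad\text{for all } t\ge n_jT.
\]
This forces a unique limit $h^*$. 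The delicate point is that the perturbation constants in Step 1 must depend linearly on $\varepsilon$ uniformly, which requires care at the free boundary where $\Phi^*=\delta$.

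\textbf{Step 4: remaining conclusions.} The convergence of $h'(t)-k^*(t)$ follows from $C^{1+\alpha}$ boundary estimates combined with $h'=-\frac d\delta u_x$ and \eqref{eq-phix1}. Uniform convergence of $u(t,x)$ to $\Phi^*(t,h(t)-x)$ on $[0,h(t)]$ follows from the local convergence of Step 2 together with the near-$1$ bounds of Step 1 in the far field.
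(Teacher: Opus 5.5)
Your overall architecture (semi-wave sub- and supersolutions, an $O(1)$ bound, compactness, a Liouville step, a restart) follows the paper's, but Step 2 has a genuine gap. Work in the frame $x-K(t)$; in your frame $y=h(t)-x$, constant shifts of $\Phi^*$ do not solve the limit equation, whose drift is $k^*+\eta_\infty'$. Call the limiting front $\Gamma(t)$. Sliding $\Phi^*(t,L-x)$ from below, as in the proof of Lemma \ref{lemm-gr}, can push $L$ only up to $\inf_t\Gamma(t)$, and sliding from above only down to $\sup_t\Gamma(t)$. Past that point the shift cannot be improved, and the stability near $1$ coming from $\partial_uf(t,1)<0$ only controls the far field. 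The Hopf lemma gives a contradiction only at a time $t_1$ where the shifted semi-wave touches $V$ at the free boundary \emph{and} $\Gamma'(t_1)=0$, that is, only if an extremum of $\Gamma$ is attained. For a limit along an arbitrary sequence $n_jT$ nothing guarantees this, so your argument does not exclude $a^-=\inf\Gamma<\sup\Gamma=a^+$.

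The missing idea is the choice of sequence. Take $t_n\to\infty$, not restricted to $T\mathbb Z$ (write $t_n=k_nT+r_n$ with $r_n\to r_*$), such that $h(t_n)-K(t_n)\to\liminf_{t\to\infty}[h(t)-K(t)]$. Then the limit front satisfies $\Gamma(t)\ge\Gamma(r_*)$ for all $t$, so $\Gamma'(r_*)=0$. Only the one-sided bound $V\ge\Phi^*(t,L_0-x)$ from the subsolution is needed: sliding gives $L^*=\Gamma(r_*)$, and Hopf at $(r_*,\Gamma(r_*))$ forces $V\equiv\Phi^*(t,L^*-x)$. This also makes your Step 3 one-sided. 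If $\limsup>\liminf$, restart only the supersolution at $t_n$ with all its perturbation parameters small; this gives $\limsup\le\liminf$ plus an arbitrarily small constant, a contradiction. No refined subsolution is needed, and once $h-K$ converges every sequence is a liminf sequence, which yields full convergence. Your general Liouville claim can probably be rescued when the extrema are not attained, but only by feeding in forward-stability estimates of both signs, which is not in your sketch.

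There are also two concrete problems in Step 1, both repairable.
\begin{itemize}
\item Your single-front subsolution $(1-\varepsilon e^{-\gamma t})\Phi^*(t,\underline h(t)-x)$ has no admissible left boundary. On $[g(t),\underline h(t)]$ it is about $1-\varepsilon>\delta=u(t,g(t))$ at $x=g(t)$. Using an interior line instead would require $u\ge 1-Ce^{-\gamma t}$ there for all later times, which you do not have. The paper avoids this with the two-front function $\Phi^*(t,\underline h-x+x_\delta)+\Phi^*(t,\underline h+x+x_\delta)-1-q_1e^{-\sigma_1 t}$ on $[-\underline h(t),\underline h(t)]$. Its interaction error is controlled by the exponential decay of $1-\Phi^*$ from Lemma \ref{lem-1-phi}.
\item For your supersolution, $\Phi^*=\delta/(1+\varepsilon e^{-\gamma t})<\delta$ is reached only at a negative argument, outside the domain of $\Phi^*$. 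You must extend $\Phi^*$ there; the paper uses a quadratic with slope $\frac{\delta}{d}k^*(t)$ at $0$. You then need to check the differential inequality and the free boundary condition on that piece.
\end{itemize}
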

 
 As mentioned above, the existence and uniqueness of a semi-wave pair to \eqref{eqn-wave} and \eqref{eq-phix1} were previously  established in \cite{MDW} under the additional assumption ${\bf (f_{kpp})}$. 
Theorem \ref{theo-semi-wave} above extends that result by removing this strong KPP condition and establishing its validity in the general monostable case.

 In  homogeneous environment, the corresponding result of Theorem \ref{theo-semi-wave} was proved in \cite{D} by extending a phase-plane approach used by Du and Lou~\cite{DL15} for a related but different free boundary problem.  For the time-periodic case, where the phase-plane technique is no longer applicable, a similar result was first proved by Du, Guo and Peng~\cite{DGP} for a model with the  free boundary condition of \cite{DL15}, and with a special nonlinear function $f(t,u)$ that satisfies the strong KPP condition ${\bf (f_{kpp})}$ automatically (and this property is crucial in the proofs there). One of the observations in \cite{DGP} is that the mean value of  $k(t)$ can be conveniently used for the classification and comparison of different semi-wave solutions, which will also be used in the analysis of this paper, but this is not enough to remove ${\bf (f_{kpp})}$ -- a rather new approach in the proof here is required (see the proof of Theorem \ref{theo-semi-wave} in Section 3 for details).

Theorem \ref{thm-exact} provides  a precise description of the asymptotic behavior of \eqref{free-bound} with a general monostable nonlinearity. Related results in homogeneous media and time-periodic media with a strong KPP nonlinearity can be found in \cite{D, DLNN, DMZ, DLZ15, DMZ2, SLZ,DGP} and the references therein. Theorem \ref{thm-exact} appears to be the first result of this nature to cover the general monostable case in a heterogeneous environment setting with free boundary. 

Since  ${\bf (f_{kpp})}$ is no longer required, the above results can be easily applied to some bistable cases. More precisely, suppose $f(t,u)$ satisfies \eqref{f-smooth} and ${\bf (f_p)}$, but instead of ${\bf (f_m)}$ it satisfies
\begin{enumerate}
  \item[($\rm{\bf{f_b}}$):](Bistable) $\begin{cases} \mbox{ For some $\theta\in (0,1)$ and every fixed $t\in\mathbb R$, $f(t, 0)=f(t,\theta)=f(t,1)=0$,}\\
  \mbox{ $f(t, u)>0 $ for $u\in(\theta,1)$, $f(t, u)<0 $ for $u\in (0, \theta)\cup (1,+\infty)$, }\\
  \mbox{ $\partial_uf(t, 0), \partial_uf(t,1)<0$ and $\underline f(u):=\inf_{t\in\mathbb R}f(t,u)$ satisfies $\int_0^1\underline f(u)du>0$.}
  \end{cases}$
  \end{enumerate}
  Then we have the following result.
  \begin{theorem}\label{thm-bistable} Suppose that  \eqref{f-smooth}, ${\bf (f_p)} $, ${\bf (f_b)}$ hold, and $\delta\in (\theta^*_{\underline f}, 1)$.  Then all the conclusions in Theorems \ref{theo-semi-wave} and \ref{thm-exact} remain valid.  
  \end{theorem}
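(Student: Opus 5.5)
The plan is to reduce the bistable case to the monostable framework. The proofs of Theorems \ref{theo-semi-wave} and \ref{thm-exact} only use the behaviour of $f$ on $[\delta,\infty)$ once we know that $u\ge\delta$ on the relevant region. So the first step is to show that, for $\delta\in(\theta^*_{\underline f},1)$, the solution $(u,g,h)$ of \eqref{free-bound} eventually stays above $\delta$ away from the boundary and that spreading happens. I would then modify $f$ below $\delta$ to obtain a monostable $\tilde f$ and transfer the conclusions.

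\textbf{Semi-waves.} A solution $\Phi$ of \eqref{eqn-wave} should satisfy $\Phi>\delta$ for $x>0$. I would prove this by a maximum principle argument on the $T$-periodic problem: a periodic minimum below $\delta$ would be an interior point where $\Phi<\delta<\theta$... but $f<0$ there, so the argument needs care. Instead I would construct semi-waves directly for a monostable modification $\tilde f(t,u)$ that equals $f(t,u)$ for $u\ge\delta'$ with $\theta<\delta'<\delta$. Since $\theta^*_{\underline f}>\theta$, this is possible and keeps \eqref{f-smooth}, ${\bf (f_p)}$, ${\bf (f_m)}$. Theorem \ref{theo-semi-wave} then gives a unique pair $(k^*,\Phi^*)$ with $\Phi^*_x>0$, hence $\Phi^*\ge\delta$, so it solves the original problem.

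For uniqueness with the original $f$, I would show that any semi-wave $\Phi$ of \eqref{eqn-wave} has $\Phi\ge\delta$. Suppose $\min\Phi<\delta$. Then on the set where $\Phi<\theta$ we have $f<0$, so $\Phi$ is a periodic supersolution of a linear problem and cannot attain an interior minimum below $\delta$ unless it lies below $\theta$. There it contradicts the strong maximum principle applied to $\Phi_t-d\Phi_{xx}+k\Phi_x=f(t,\Phi)<0$ at a periodic minimum point. If instead $\theta\le\min\Phi<\delta$, I would use $\tilde f$-comparison. This is messy, so the cleanest choice is $\delta'\in(\theta,\delta)$ chosen very close to $\theta$; then $f\le\tilde f$ everywhere, and semi-waves of $f$ are subsolutions of the $\tilde f$ problem. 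The mean-speed comparison of Section 3 then forces coincidence.

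\textbf{Spreading.} This is the main obstacle. With Allee effect, spreading for all initial data requires the energy condition. For $\delta>\theta^*_{\underline f}$, I would use the autonomous bistable problem with $\underline f$, which satisfies $\underline f\le f$. By \cite{DLNN}, its solution spreads for every initial datum. Comparison for free boundary problems with the same Stefan-type condition gives $g\le\underline g$, $h\ge\underline h$ and $u\ge\underline u$. Hence $u\to1$ locally uniformly and $h-g\to\infty$. In particular $u(t,x)\ge\delta'$ on $[g(t),h(t)]$ for large $t$. This holds near the boundary by continuity from the value $\delta$ and $\delta'<\delta$, but it needs a uniform gradient bound near the free boundary. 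I would prove it by comparison with the compact-support solution of the $\underline f$ problem, which stays above $\delta'$. Once $u\ge\delta'$, $u$ solves \eqref{free-bound} with $f$ replaced by $\tilde f$ from some time $t_0$ on. Applying Theorem \ref{thm-exact} with initial time $t_0$, which is allowed by time-periodicity after shifting $t_0$ to a multiple of $T$, yields all the asserted convergences.
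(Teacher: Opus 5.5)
\textbf{Spreading step.} This is the weak point, and everything else hinges on it. You assert that the solution of the autonomous $\underline f$-problem is a lower solution, i.e.\ $g\le\underline g$, $h\ge\underline h$, $u\ge\underline u$. Problem \eqref{free-bound} has no such order-preserving comparison principle. The paper stresses in the introduction that ordered initial data need not stay ordered, precisely because the boundary value is $\delta$ rather than $0$.

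To compare $u$ with $\underline u$ on $[\underline g(t),\underline h(t)]$, you need $u(t,\underline h(t))\ge\delta=\underline u(t,\underline h(t))$, and likewise at $\underline g(t)$, whenever the lower front lies inside $[g(t),h(t)]$. This is exactly the extra hypothesis in the lower-solution version of Lemmas \ref{lem-comp}--\ref{lem-comp2}. In Lemma \ref{lemma-lower} it is verified only because $u\ge\delta$ is already known for large $t$. Here $u\ge\delta$ is what you are trying to prove. For general $u_0\in I(h_0)$ it can fail at early times: $u_0$ may dip below $\delta$, and the range may shrink. So the comparison is circular.

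Even granting it, it would only control $u$ on $[\underline g(t),\underline h(t)]$. A gradient bound near $h(t)$ gives $u\ge\delta'$ only on a strip of fixed width, and nothing you say bounds $h(t)-\underline h(t)$. So ``$u\ge\delta'$ on all of $[g(t),h(t)]$'' is unsupported.

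The paper avoids this entirely. It checks that the time-periodic $f$ itself satisfies the hypotheses of Theorem 3.3 in \cite{DLNN}, whose proof yields $T_0$ with $u\ge\delta$ on the whole of $[g(t),h(t)]$ for $t\ge T_0$. After that, modifying $f$ below $\theta^*_{\underline f}$ does not affect $(u,g,h)$. The paper modifies $f$ on $[\theta,\theta^*_{\underline f}]$ so that $\theta$ plays the role of $0$; your modification on $[0,\delta']$ works equally well. Theorems \ref{theo-semi-wave}--\ref{thm-exact} then apply from $T_0$ on, and your restart at a multiple of $T$ is a fine way to finish.

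\textbf{Semi-wave uniqueness.} Your argument for semi-waves of the original $f$ has the maximum principle backwards. At an interior, time-periodic minimum, $\Phi_t=\Phi_x=0\le\Phi_{xx}$, so $\Phi_t-d\Phi_{xx}+k\Phi_x\le 0$. This contradicts $f(t,\Phi)>0$, so minima with values in $(\theta,\delta)$ are excluded, and the value $\theta$ is excluded by the strong maximum principle. But it is perfectly consistent with $f(t,\Phi)<0$. So the case $\min\Phi<\theta$ is the one you cannot rule out this way, while the case you call messy is the easy one.

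The fallback (``semi-waves of $f$ are subsolutions for $\tilde f$, and the mean-speed comparison forces coincidence'') is not an argument. Section 3 compares two exact solutions with $\Phi^i>\delta$ for $x>0$; for instance, Step 2 uses $\Phi^{2,z}(t,0)\ge\Phi^2(t,z)>\delta$. Moreover, coincidence with a $\tilde f$-profile already presupposes $\Phi\ge\delta'$.

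The paper only works with the semi-wave of the modified nonlinearity. What you can honestly claim is uniqueness among semi-waves with $\Phi>\theta$. By the corrected minimum argument, these satisfy $\Phi>\delta$ for $x>0$, hence solve the $\tilde f$-problem, so Theorem \ref{theo-semi-wave} applies. That is all Theorem \ref{thm-exact} needs.
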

  \begin{proof}
  We easily see that the conditions of Theorem 3.3 in \cite{DLNN} are satisfied. By the proof of this theorem in \cite{DLNN}, there exists $T_0>0$ such that
  \[
  u(t,x)\geq \delta \mbox{ for } t\geq T_0,\ x\in [g(t), h(t)].
  \]
  Since $\delta>\theta^*_{\underline f}>\theta$, we see that for $t\geq T_0$, the behaviour of $(u(t,x), g(t), h(t))$ is completely determined by $f(t,u)|_{\mathbb R\times [\theta^*_{\underline f}, \infty)}$. We may now modify $f(t,u)|_{\mathbb R\times [\theta, \theta^*_{\underline f}]}$,  so that the modified $f|_{\mathbb R\times [\theta, \infty)}$ satisfies  ${\bf (f_m)}$ with $0$  replaced by $\theta$. As $(u,g,h)$ is not affected by the modification for $t\geq T_0$, the conclusions in Theorems \ref{theo-semi-wave} and \ref{thm-exact} apply.
    \end{proof}
  
  Theorem \ref{thm-bistable} can be similarly extended to the combustion case, but we leave the details to the interested reader.

\medskip

To put this work into perspective against a more general background, let us mention some further related works.  
 As proved in \cite{MDW} (see also \cite{DGP,DG,D} for similar results for related free boundary models), the free boundary problem \eqref{free-bound} is closely related to the associated Cauchy problem
\begin{equation}\label{eq-cauchy}
  \begin{cases}
    U_t - dU_{xx} = f(t, U), & t > 0,~ x \in \mathbb{R}, \\
    U(0,x) = U_0(x),
  \end{cases}
\end{equation}
where
\begin{equation*}
  U_0(x) =
  \begin{cases}
    u_0(x), & x \in [-h_0, h_0], \\
    0, & x \in \mathbb{R} \setminus [-h_0, h_0].
  \end{cases}
\end{equation*}
To be precise, as \( \delta \to 0 \), the solution $(u^\delta, g^\delta, h^\delta)$ of \eqref{free-bound} satisfies  \( (g^\delta(t), h^\delta(t)) \to (-\infty, \infty) \) and \( u^\delta \to U \) locally uniformly in \( (t,x)\in(0,\infty)\times\mathbb{R} \). This conclusion indicates that the Cauchy problem \eqref{eq-cauchy} can be regarded as a limiting case of the free boundary problem when the boundary density level tends to zero. 

Nevertheless, the fine asymptotic behavior of the free boundary model differs substantially from that of the corresponding Cauchy problem, where the KPP condition plays a crucial role.  For example,
 a well-known ``logarithmic shifting term" appears in the wave approximation of the solution to the Cauchy problem, but this is not the case in the free boundary models (see Theorem \ref{thm-exact} for \eqref{free-bound} and \cite{DMZ} for a related but different free boundary model).

 More precisely,
in the homogeneous setting, when \( f \) satisfies the KPP condition, 
it is well known that there exist constants $C^+$ and $C^-$ such that the solution $U$ of the Cauchy problem with a compactly supported initial function satisfies
\[
\lim_{t \to +\infty} \sup_{x \in \mathbb R_{\pm}}
\left| U(t,x) - \Phi_{c_*}\!\Big( \pm x - c_* t + \frac{3}{c_*}\log t + C^{\pm} \Big) \right| = 0,
\]
where \( \Phi_{c_*} \) denotes the traveling wave with minimal speed \( c_* \); 
see \cite{Bramson,  HNRR, RRR} and references therein. It is well known that such a precise approximation result does not remain valid if the KPP condition is dropped.

In heterogeneous media, \eqref{eq-cauchy} and its many  variations have also been extensively investigated, with their spreading dynamics well characterized within the framework of generalised traveling wave solutions; see, for example, \cite{GF, wein02, HR11, LYZ,shen11,HR15,shen10} and the references therein.
In particular, for the spatial periodic version of \eqref{eq-cauchy}, the logarithmic shifting term was determined in \cite{HNRR16} under  the KPP  condition, namely, there exists  some bounded  function \( \xi : (0,\infty) \to \mathbb{R} \)  such that
\begin{equation*}
\lim_{t \to \infty} 
\left\| 
U(t,\cdot) - \Phi_{c_*}\!\left( t - \frac{3}{2c_* \lambda_*} \log t + \xi(t), \cdot \right)
\right\|_{L^\infty(\mathbb R_+)} 
= 0,
\end{equation*}
where \( \Phi_{c_*} \) is the pulsating traveling wave corresponding to the minimal speed \( c_* \), whose existence and uniqueness were established by
 Berestycki, Hamel and Roques \cite{BHR05}, and $\lambda^*>0$ is uniquely determined by $\Phi_{c_*}$. See also   \cite{G14, D15} for related results in asymptotically periodic or asymptotically homogeneous media. We further refer to \cite{HR15, N099, shen10,LYZ} and the references therein for results in the time-periodic setting. 

Let us also  briefly mention several related works concerning the asymptotic behavior of the Cauchy problem with non-compactly supported initial data. When the initial profile \( U_0 \) decays to zero at the same rate as a supercritical traveling wave $(c>c_*)$ as \( x \to \infty \), Hamel and Roques~\cite{HR11} established that, under the KPP condition, the corresponding solution converges to \( \Phi_c \) without any phase shift. 
If, instead, the initial datum lies between two supercritical traveling waves propagating at the same speed, it follows from~\cite{BMR} that the solution converges to the entire family of translates of this wave. 
Moreover, when both the initial data and its derivative decay exponentially, with a rate exceeding a certain critical threshold, Giletti~\cite{G14} proved the existence of a function \( m(t) \) satisfying \( m(t) = o(t) \) as \( t \to \infty \) such that
\begin{equation}\label{eq-up}
\lim_{t\to\infty} \|U(t,\cdot) - \Phi_{c^*}(t - m(t), \cdot)\|_{L^\infty(\mathbb{R}^+)} = 0.
\end{equation}
See also~\cite{ducrot,DM20,DM2,P20} and the references therein for related results involving more general nonlinearities.

Finally we would like to point out two significant differences of the current free boundary model \eqref{free-bound} from the one of Du and Lin \cite{DL10,DL15} and the Cauchy problem \eqref{eq-cauchy}. Firstly, the latter two models share the comparison principle that if two  solutions  $u(t,x)$ and $\tilde u(t,x)$ are  ordered at $t=0$, say $u(0,x)\leq \tilde u(0,x)$, then such an order is retained for all  later time $t>0$ (i.e., $u(t,x)\leq \tilde u(t,x)$). However, this is no longer the case for \eqref{free-bound}, and this difference creates considerable technical difficulties in the mathematical treatment of \eqref{free-bound}, although at the end, these three models share many common features in their long-time dynamics. Secondly, in the free boundary model
of \cite{DL10,DL15}, the population range $[g(t), h(t)]$ always expands as time increases, but this is no longer the case for \eqref{free-bound}, where the population range may shrink; see \cite{CDN, DLNN} for more details.\footnote{In the cases considered here, shrinking may occur before a certain time $t_0>0$, depending on the shape of the initial density $u_0(x)$; see Proposition \ref{prop-summ} (ii).}  
\medskip

The remainder of this paper is organized as follows.
Section~2 recalls several existing results, including comparison principles for free boundary problems and some basic properties of solutions to~\eqref{free-bound}, which form of foundation for the subsequent analysis. In Section~3, we prove the existence and uniqueness of semi-wave solutions without assuming ${\bf (f_{kpp})}$,
 and then determine its  asymptotic behavior as \(x \to \infty\). 
Section~4 is devoted to the proof of Theorem~\ref{thm-exact} based on  delicate constructions of upper and lower solutions, which enable us to obtain uniform bounds for \( |g(t)+\int_0^t k^*(s)\,ds| \) and \( |h(t)-\int_0^t k^*(s)\,ds| \), constituting a crucial step  in the proof. The convergence of the solution
 is then proved by first showing the convergence to some limiting functions along a special time sequence, made possible by the above bounds, and then proving the limiting functions are obtained from the semi-wave, which allows us to  revisit the construction of upper solutions to obtain a refined upper solution that leads to the desired convergence result. 

 \section{Preliminaries}

 \subsection{Some basic known results}
 For convenience, we collect here several basic facts  from \cite{MDW}, which will serve as the foundation for the analysis in the subsequent sections. We begin with two  comparison results.

 \begin{lemma}{\rm\cite[Lemma 2.2]{MDW}\label{lem-comp}}
  Assume that  $f\in C^1$ in $u$ uniformly with respect to $t\in \mathbb R$, $\tau\in(0,\infty)$ and   $(u,g,h)$ is the solution of \eqref{free-bound}. If  $\bar{g},\bar{h}\in C^1([0,\tau])$, $\bar{u}\in C(\bar{\Sigma}_\tau)\cap C^{1,2}( \Sigma_\tau)$ with $\Sigma_\tau=\{(t,x)\in\mathbb{R}^2:0<t\leq \tau,\bar{g}(t)< x<\bar{h}(t)\}$ satisfy 
 \begin{equation}\label{eq-com}
   \begin{cases}
     \bar{u}_t-d\bar{u}_{xx}\geq f(t,\bar{u}), &t\in (0, \tau],\ x\in ( \bar{g}(t), \bar{h}(t)),\\
    h(t)>\bar{g}(t)\geq g(t),   \bar{u}(t,\bar{g}(t))\geq u(t,\bar{g}(t)), &t\in (0,\tau],\\
    \bar{u}(t,\bar{h}(t))=\delta,~ \bar{h}'(t)\geq -\frac{d}{\delta}\bar{u}_x,&t\in (0, \tau],\ x=\bar{h}(t),\\
    \bar{u}(t,h(t))\geq u(t,h(t)), & t\in\{s\in (0, \tau]: h(s)\leq \bar{h}(s)\},\\
    h_0<\bar{h}(0),u_0(x)\leq,\not\equiv  \bar{u}(0,x),&x\in[\bar{g}(0),h_0].
   \end{cases}
 \end{equation}
 Then
 \begin{align*}
   &h(t)<\bar{h}(t)\text{ in }[0,\tau],\\
   &u(t,x)< \bar{u}(t,x)\text{ for }t\in(0,\tau] \text{ and }\bar{g}(t)<x<h(t).
 \end{align*}
 \end{lemma}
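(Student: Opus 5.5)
The plan is a contradiction argument at the first time the conclusion fails, combined with the strong maximum principle and the Hopf lemma. Set $w=\bar u-u$ on the region $\{(t,x): \bar g(t)\le x\le \min(h(t),\bar h(t))\}$. As a first step I establish strict inequalities for small $t>0$. Since $h_0<\bar h(0)$ and everything is continuous, $h(t)<\bar h(t)$ on some $[0,\epsilon]$. On this interval $w$ satisfies a linear inequality
\[
w_t-dw_{xx}\ge c(t,x)w, \qquad c=\int_0^1\partial_uf(t,u+s(\bar u-u))\,ds,
\]
where $c$ is bounded because $f$ is $C^1$ in $u$ uniformly in $t$. Its boundary data are $w(t,\bar g(t))\ge0$ and $w(t,h(t))=\bar u(t,h(t))-\delta\ge0$; the latter is provided by the fourth line of \eqref{eq-com}, since $h\le\bar h$ there. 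The initial data satisfy $w(0,\cdot)\ge,\not\equiv0$. The parabolic strong maximum principle then gives $w>0$ in the interior for $t\in(0,\epsilon]$.

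Next define
\[
t^*=\sup\{s\in(0,\tau]: h<\bar h \text{ on }[0,s] \text{ and } u<\bar u \text{ on } \bar g(t)<x<h(t),\ t\in(0,s]\}.
\]
If $t^*=\tau$ and both strict inequalities persist at $\tau$, we are done. Otherwise continuity leaves two possibilities. In the first, $h(t^*)=\bar h(t^*)$. In the second, $h(t^*)<\bar h(t^*)$ but $u(t^*,x_0)=\bar u(t^*,x_0)$ at some interior point $x_0\in(\bar g(t^*),h(t^*))$.

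The second possibility is excluded directly. On $(0,t^*]$ the comparison hypotheses still hold with $h\le\bar h$, so $w\ge0$ there. Since $w\not\equiv0$, the strong maximum principle applied on $(0,t^*]$ gives $w>0$ at interior points at time $t^*$. This contradicts $w(t^*,x_0)=0$.

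The main obstacle is the first possibility, the boundary-touching case. Here $h(t^*)=\bar h(t^*)=:x^*$, so $u(t^*,x^*)=\bar u(t^*,x^*)=\delta$. Moreover $w>0$ inside on $(0,t^*)$, and $w(t^*,x^*)=0$ is attained on the parabolic boundary, which is a lateral boundary point. The curve $x=h(t)$ is $C^1$, so the domain satisfies an interior parabolic ball condition near $(t^*,x^*)$, and the Hopf lemma yields $w_x(t^*,x^*)<0$, that is, $\bar u_x(t^*,x^*)<u_x(t^*,x^*)$. Combining this with the free boundary conditions gives
\[
\bar h'(t^*)\ge -\frac d\delta\bar u_x(t^*,x^*)>-\frac d\delta u_x(t^*,x^*)=h'(t^*).
\]
On the other hand, $h<\bar h$ on $[0,t^*)$ together with equality at $t^*$ forces $h'(t^*)\ge\bar h'(t^*)$, which is a contradiction. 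Some care is needed to justify applying Hopf when $w$ is only continuous up to the moving boundary. I would use the $C^{1,2}$ regularity of $u$ up to $x=h(t)$ (standard for \eqref{free-bound}), the assumed regularity of $\bar u$ near $\bar h$ (taken for granted from the setting), and a local change of variables $y=x-h(t)$ that flattens the boundary. The left boundary $\bar g$ never causes trouble, because we compare only on $x>\bar g(t)$ and assume $\bar g(t)<h(t)$.
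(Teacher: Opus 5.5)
The paper gives no proof of this lemma; it is imported from \cite{MDW}. Your argument is the standard proof of this kind of comparison principle and is correct: you apply the weak and strong maximum principles to $w=\bar u-u$ on $\{\bar g(t)<x<h(t)\}$ up to the first time $t^*$ with $h(t^*)=\bar h(t^*)$. The Hopf lemma at $(t^*,h(t^*))$ then gives $\bar h'(t^*)>h'(t^*)$, which contradicts $h'(t^*)\ge\bar h'(t^*)$.

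There is one small slip in your case analysis. If $t^*<\tau$ and both strict inequalities hold at $t^*$, continuity alone does not force either of your two cases. You need to note that $h<\bar h$ persists on some $[0,t^*+\epsilon]$ and rerun the strong maximum principle there, which your case-two argument already supplies. Alternatively, define $t^*$ simply as the first touching time of $h$ and $\bar h$.
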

 \begin{lemma}{\rm\cite[Lemma 2.3]{MDW}}\label{lem-comp2}
  Assume that $f\in C^1$ in $u$ uniformly with respect to $t\in \mathbb R$, $\tau\in(0,\infty)$ and $(u,g,h)$ is the solution of \eqref{free-bound}.  If $\bar{g},\bar{h}\in C^1([0,\tau])$, $\bar{u}\in C(\bar{\Sigma}_\tau)\cap C^{1,2}( \Sigma_\tau)$ with $\Sigma_\tau=\{(t,x)\in\mathbb{R}^2:0<t\leq \tau,\bar{g}(t)< x<\bar{h}(t)\}$ satisfy
 \begin{equation*}
   \begin{cases}
     \bar{u}_t-d\bar{u}_{xx}\geq f(t,\bar{u}), &t\in (0,\tau],\ x\in (\bar{g}(t), \bar{h}(t)),\\
     \bar{u}=\delta, \bar{g}'(t)\leq -\frac{d}{\delta}\bar{u}_x,&t\in (0, \tau],\ x=\bar{h}(t),\\
    \bar{u}=\delta, \bar{h}'(t)\geq -\frac{d}{\delta}\bar{u}_x,&t\in (0, \tau], \ x=\bar{h}(t),\\
    \bar{u}(t,g(t))\geq u(t,g(t)), & t\in\{s\in (0, \tau]:  g(s)\geq \bar{g}(s)\},\\
    \bar{u}(t,h(t))\geq u(t,h(t)), & t\in \{s\in (0, \tau]:  h(s)\leq \bar{h}(s)\},\\
    [-h_0,h_0]\subset (\bar{g}(0),\ \bar{h}(0)),u_0(x)\leq,\not\equiv  \bar{u}(0,x),&x\in[-h_0,h_0].
   \end{cases}
 \end{equation*}
 Then
 \begin{align*}
   &[g(t),h(t)]\subset (\bar{g}(t),\bar{h}(t))\text{ for  }t\in[0,\tau],\\
   &u(t,x)< \bar{u}(t,x)\text{ for }t\in(0,\tau] \text{ and }g(t)<x<h(t).
 \end{align*}
 \end{lemma}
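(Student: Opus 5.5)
The plan is the standard ``first touching time'' argument for free boundary problems, run on the domain of the true solution $\{(t,x): 0<t\le\tau,\ g(t)<x<h(t)\}$. That domain is contained in $\Sigma_\tau$ as long as $[g(t),h(t)]\subset(\bar g(t),\bar h(t))$. We read the second line of the hypotheses as the condition at the left end $x=\bar g(t)$, i.e. $\bar u(t,\bar g(t))=\delta$ and $\bar g'(t)\le -\frac d\delta \bar u_x(t,\bar g(t))$, which is clearly what is intended.

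Since $[-h_0,h_0]\subset(\bar g(0),\bar h(0))$ and all four boundary curves are continuous, there is a maximal $t^*\in(0,\tau]$ such that $\bar g(t)<g(t)$ and $h(t)<\bar h(t)$ for all $t\in[0,t^*)$. Suppose, for contradiction, that $t^*<\tau$ or that one of these strict inequalities fails at $t=\tau$. Then at the terminal time $t^*$ we have $g(t^*)=\bar g(t^*)$ or $h(t^*)=\bar h(t^*)$.

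First I treat the time interval $(0,t^*]$. Set $w=\bar u-u$ on $D^*=\{0<t\le t^*,\ g(t)<x<h(t)\}$. Because $f$ is $C^1$ in $u$ uniformly in $t$, and both $u$ and $\bar u$ are bounded there, $w$ satisfies the linear inequality
\[
w_t-dw_{xx}\ge c(t,x)\,w
\]
with $c$ bounded (mean value theorem). On the lateral boundaries we have $g(t)\ge\bar g(t)$ and $h(t)\le\bar h(t)$ throughout $[0,t^*]$, so the hypotheses give $w(t,g(t))\ge0$ and $w(t,h(t))\ge0$. At $t=0$, $w(0,\cdot)\ge0$ and $w(0,\cdot)\not\equiv0$. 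After the usual change of variable $x\mapsto (x-g(t))/(h(t)-g(t))$ to a fixed strip, the weak and then strong maximum principles give $w>0$ in $D^*$.

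Next, suppose $h(t^*)=\bar h(t^*)$. Then
\[
w(t^*,h(t^*))=\delta-\delta=0,
\]
while $w>0$ inside. The parabolic Hopf lemma on the $C^1$ moving boundary (again applied after straightening) yields $w_x(t^*,h(t^*))<0$, i.e. $\bar u_x<u_x$ there. Hence
\[
\bar h'(t^*)\ \ge\ -\tfrac d\delta\bar u_x(t^*,\bar h(t^*))\ >\ -\tfrac d\delta u_x(t^*,h(t^*))=h'(t^*).
\]
But $h<\bar h$ on $[0,t^*)$ with equality at $t^*$ forces $h'(t^*)\ge\bar h'(t^*)$, a contradiction. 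The case $g(t^*)=\bar g(t^*)$ is symmetric: Hopf gives $w_x>0$ at the left end, so $\bar g'(t^*)<g'(t^*)$, contradicting $\bar g<g$ before $t^*$.

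Therefore $[g(t),h(t)]\subset(\bar g(t),\bar h(t))$ on all of $[0,\tau]$. Repeating the maximum principle argument on the whole of $(0,\tau]$ gives $u<\bar u$ for $g(t)<x<h(t)$.

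The main technical point is the Hopf lemma at a moving, merely $C^1$ boundary, with a possibly sign-changing zero-order coefficient $c$. I would handle it by substituting $w=e^{Kt}\tilde w$ with $K\ge\|c\|_\infty$ to make the zeroth-order term nonnegative. I would then flatten the boundary, which introduces a bounded drift term; the standard Hopf lemma for parabolic operators applies to the resulting equation. If the $C^1$ regularity of the curves turns out to be insufficient, I would first compare with slightly perturbed data (shrink $h_0$, or shift $\bar h$ by $\varepsilon$) so that strict inequalities hold, and then let $\varepsilon\to0$.
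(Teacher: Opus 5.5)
Your argument is correct, and your reading of the second hypothesis as the condition at $x=\bar g(t)$ is the intended one. The paper does not prove this lemma itself; it simply quotes \cite[Lemma 2.3]{MDW}. Your proof is the standard argument for comparison lemmas of this type: take the first touching time $t^*$, note that the lateral hypotheses apply precisely because $\bar g\le g$ and $h\le \bar h$ on $[0,t^*]$, use the strong maximum principle to get $\bar u>u$ inside the solution's domain, and then the Hopf lemma at the touching point contradicts $h'(t^*)\ge \bar h'(t^*)$ (resp.\ $g'(t^*)\le \bar g'(t^*)$).
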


 In what follows, we refer to the triple \( (\bar{u}, \bar{g}, \bar{h}) \) given in Lemmas~\ref{lem-comp} and~\ref{lem-comp2} as an upper solution of \eqref{free-bound}. 
A lower solution is defined analogously by reversing these inequalities where appropriate.

We now recall several results concerning the qualitative properties of the unique solution to problem~\eqref{free-bound}.

 \begin{proposition}{\rm\cite{MDW}}\label{prop-summ}
  Assume that  $\rm{(\bf{f_p})}$ and   $\rm{(\bf{f_m})}$ hold, and   $(u,g,h)$ is the solution of \eqref{free-bound} with $u_0\in I(h_0)$. Then the following statements hold true:
  \begin{enumerate}[{\rm(i)}]
    \item There exists $M_0>0$ such that $0<u(t,x)\leq M_0$ and $|h'(t)|,|g'(t)|\leq M_0$ for $t\in[0,\infty)$ and $x\in[g(t),h(t)]$.
    \item There exists $T_0>0$ such that $-g'(t),h'(t)>0$ and $u(t,x)\geq \delta$ for $t\geq T_0$ and $x\in[g(t),h(t)]$.
    \item  As $t\to\infty$, 
    \begin{equation*}
     (g(t),h(t))\to\mathbb R \text{ and }u(t,x)\to 1 \text{ locally uniformly in } x\in\mathbb{R}.
        \end{equation*}
         \item $\limsup_{t\to\infty} u(t,x)\leq 1 \mbox{ uniformly for } x\in [g(t), h(t)].$
  \end{enumerate}
 \end{proposition}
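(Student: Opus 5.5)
We would prove the four items in the order (i), then (iii) together with (ii), then (iv), relying only on the maximum principle, the comparison Lemmas \ref{lem-comp} and \ref{lem-comp2}, and standard parabolic estimates.

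\textbf{Step (i): a priori bounds.} For the bound on $u$, let $\bar v(t)$ solve the periodic ODE $\bar v'=f(t,\bar v)$ with $\bar v(0)=\max\{\|u_0\|_\infty,1\}$. By ${\bf (f_m)}$, $\bar v\ge 1\ge\delta$ and $\bar v$ is bounded. Taking $\bar u=\bar v$ on the interval $[g(t),h(t)]$ and comparing with the parabolic maximum principle gives $u\le M:=\max\{\|u_0\|_\infty,1\}$. Positivity of $u$ follows from the strong maximum principle, since $f(t,0)=0$, the boundary values equal $\delta>0$ and $u_0>0$.

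For the bound on $h'$ we would use the Du--Lin barrier near $x=h(t)$. Set
\[
w(t,x)=\delta+(M-\delta)\bigl[2\kappa(h(t)-x)-\kappa^2(h(t)-x)^2\bigr]
\]
on $h(t)-\kappa^{-1}\le x\le h(t)$, with $\kappa$ large depending on $M$, $d$, $\|f_u\|_\infty$ and $\|u_0'\|_\infty$. We would show $u\le w$ there, which gives $u_x(t,h(t))\ge -2\kappa(M-\delta)$ and hence an upper bound on $h'$. The reflected barrier $\delta-(\ldots)$, which stays positive provided it is cut off where it reaches $\delta/2$ and $u\ge\delta/2$ there, gives the lower bound. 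Because $h$ may move left, $w$ contains a term with $h'$. For this reason we would first straighten the domain via $y=(x-g(t))/(h(t)-g(t))$, obtain uniform $C^{1+\alpha}$ estimates on unit time windows, and then conclude $|h'|,|g'|\le M_0$ with $M_0$ independent of $t$.

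\textbf{Steps (iii) and (ii): spreading and $u\ge\delta$.} The key observation is a minimum principle. On the set where $u\le\delta<1$ we have $f(t,u)\ge 0$, so $m(t)=\min_{[g(t),h(t)]}u(t,\cdot)$ cannot decrease while $m(t)<\delta$, because the boundary value is $\delta$. Comparing with the ODE solution started at $m(0)$, which converges to $1$, shows $m(t)\ge\delta$ after some $T_0$.

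Once $u\ge\delta$ on the whole interval and $u(t,\cdot)\not\equiv\delta$, the Hopf lemma gives $u_x(t,h(t))<0<u_x(t,g(t))$, hence $h'>0>g'$. Strictness after $T_0$ would follow from the strong maximum principle applied to $u-\delta\ge0$.

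It remains to prove $h-g\to\infty$. Suppose instead that $h(t)\to h_\infty<\infty$. Then the bound $|h'|\le M_0$ together with Hölder estimates for $h'$ forces $h'(t)\to0$, so $u_x(t,h(t))\to0$. On the other hand, the bounded interval solutions converge along sequences to a solution of the periodic problem $\Phi_t-d\Phi_{xx}=f$ with $\Phi\ge\delta$ and $\Phi=\delta$ on the boundary. By the Hopf lemma, $\Phi_x\ne0$ at the boundary, which is a contradiction. Hence the interval grows without bound.

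Local convergence $u\to1$ would then follow by comparison from below on a large fixed interval $[-L,L]$ with boundary data $\delta$. The corresponding periodic Dirichlet problem has a unique positive $T$-periodic solution $P_L$, and $P_L\to1$ locally as $L\to\infty$. On the upper side, $u\le\bar v(t)\to1$.

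\textbf{Step (iv) and the main obstacle.} Item (iv) is immediate from $u\le\bar v(t)$ and $\bar v(t)\to1$.

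We expect the main obstacle to be the uniform-in-time derivative bound in (i) and the contradiction argument giving $h-g\to\infty$. The difficulty in both is the lack of an order-preserving comparison for \eqref{free-bound}, which means that lower solutions cannot be used as freely as in the Du--Lin model. We would handle it through the minimum-principle argument on the actual solution combined with compactness in the straightened variable $y$, rather than through lower free-boundary solutions.
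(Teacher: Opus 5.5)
\textbf{Overall.} The paper does not prove this proposition; it is quoted from \cite{MDW}, so your argument has to stand on its own. Items (ii)--(iv) are essentially sound:
\begin{itemize}
\item The minimum principle for $m(t)=\min_{[g(t),h(t)]}u(t,\cdot)$ works, because $u_x=0$ at an interior minimum, so the motion of the domain does not enter.
\item The Hopf step is fine once you write $(u-\delta)_t-d(u-\delta)_{xx}-c(t,x)(u-\delta)=f(t,\delta)>0$. This form is needed because $u$ may exceed $1$, so $f(t,u)\ge 0$ is not available.
\item The contradiction between $h'\to0$ and the Hopf lemma, the Dirichlet comparison on $[-L,L]$, and $u\le\bar v(t)\to1$ are all correct.
\end{itemize}

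\textbf{The gap: the time-uniform bound on $|h'|,|g'|$ in (i).}
\begin{itemize}
\item For your barrier $w$, $w_t=2\kappa(M-\delta)(1-\kappa(h-x))h'$ has the wrong sign precisely when $h'<0$.
\item The reflected barrier $\delta-A[2\kappa(h-x)-\kappa^2(h-x)^2]$ has the same defect. It is a subsolution only if $-h'\le d\kappa$, but it returns only $-h'\le 2d\kappa A/\delta$. So a bootstrap closes only when $A<\delta/2$, i.e.\ only if you already know $u>\delta/2$.
\item Your remedy is circular. After $y=(x-g)/(h-g)$ the drift is $(g'+y(h'-g'))/(h-g)$, and the free boundary condition becomes $h'=-\frac{d}{\delta(h-g)}u_y(t,1)$. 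Uniform $L^p$ or Schauder estimates therefore need bounds on $h',g'$ and a lower bound on $h-g$ as \emph{input}.
\end{itemize}
This is not a technicality. Near a retreating front $u<\delta$, and \eqref{free-bound} is then of supercooled Stefan type in $u-\delta$, where front speeds can blow up in finite time. A bound valid for all times must therefore use the positivity of $u$, which your step (i) never does.

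\textbf{The fix: change the order.} Your minimum-principle proof of (ii) uses no derivative bounds, so do it first.
\begin{itemize}
\item For $t\ge T_0$ you then have $h'>0>g'$, $u\ge\delta$ and $h(t)-g(t)\ge h(T_0)-g(T_0)$.
\item The Du--Lin barrier now works verbatim on $\{t>T_0,\ h(t)-\kappa^{-1}<x<h(t)\}$, because the $h'$-term in $w_t$ has the good sign. Choose $\kappa$ so large that $\kappa^{-1}<h(T_0)-g(T_0)$, $(M-\delta)\kappa\ge\|u_x(T_0,\cdot)\|_\infty$ and $2d\kappa^2(M-\delta)\ge\max_{[0,T]\times[0,M]}f$.
\item Compare $w-u$ using $u_t-du_{xx}\le\max f$. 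This gives $0<h'(t)\le 2d\kappa(M-\delta)/\delta$, and no lower barrier is needed.
\item On $[0,T_0]$ the bound is just continuity of $h',g'$ for the global classical solution the statement presupposes; global well-posedness is part of \cite{MDW}.
\end{itemize}
If you did not take global existence as given, this finite-time bound is exactly where a real argument exploiting $u\ge\min\{\min u_0,\delta\}>0$ is needed, and neither of your barriers provides it. With (i) obtained this way, your proof of (iii) goes through, using local boundary estimates near $x=h(t)$ for the Hölder bound on $h'$.
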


\section{Existence, uniqueness and asymptotic behavior of the semi-wave}

In this section, we revisit the existence and uniqueness of semi-wave solutions in \cite{MDW} and extend the result there from the KPP case to a general monostable setting. The proof of uniqueness proceeds in two main steps: first, we show that equal integral averages of propagation speeds imply identical speeds and profiles; second, we deduce uniqueness by proving that such speed averages  are unique. In addition, we also investigate the asymptotic rate at which the semi-wave profile approaches the steady state~$1$. It is interesting to note that this convergence rate depends on $f_u(t,1)$, $d$, and the integral average of the wave speed, but is independent of~$\delta$. The design of our approach here is inspired by ~\cite{ABC,DGP}.

\begin{proof}[Proof of Theorem \ref{theo-semi-wave}]
From  \cite[Lemma 3.4]{MDW}, for any given $T$-periodic
nonnegative function $k(t)\in C^{\alpha/2}([0,T])$, problem \eqref{eqn-wave} admits a maximal positive  solution $\Phi^k$, which is strictly increasing in  $x$ and  strictly decreasing  with respect to $k(t)$ in the sense that $k_1(t)\leq, \not\equiv k_2(t) $ implies
\begin{eqnarray*}
  \Phi^{k_1}(t,x)>  \Phi^{k_2}(t,x),~   \Phi_x^{k_1}(t,0)>\Phi_x^{k_2}(t,0)\quad \text{ for }t\in[0,T], x\in(0,\infty).
\end{eqnarray*}
Our conclusion in Case (i) below (applied to the case $k_1=k_2=k$) indicates that $\Phi^k$ is indeed the unique  positive solution of \eqref{eqn-wave} for each given function $k(t)$. It then follows, by an argument analogous to that in the proof of \cite[Lemma~3.5]{MDW}, that for every $\delta\in (0,1)$ there exists a $T$-periodic function $k^*(t)\in C^{\alpha/2}([0,T])$ such that the pair $(k^*,\Phi^*)$ satisfies both \eqref{eqn-wave} and \eqref{eq-phix1}. In the following, we show that the pair $(k^*,\Phi^*)$  is  unique.

 Assume that both  $(k_1,\Phi^1)$ and $(k_2,\Phi^2)$ are solutions of  \eqref{eqn-wave}  and
 \begin{equation}\label{eq-phi12}
    \Phi^i_x(t,0)=\frac{\delta}{d}k_i(t)\quad  \text{ for }t\in[0,T],\ i=1,2.
 \end{equation}
Denote 
\begin{equation*}
  \lambda:=-\frac{1}{T}\int_0^Tf_u(s,1)ds=-\frac{1}{T}\int_{t-T}^{t}f_u(s,1)ds>0.
\end{equation*}
Since  $f_u$ is continuous in $u$ uniformly in $t$, there exists $\eta>0$ such that 
\begin{equation}\label{eq-fu}
      |f_u(t,u)-f_u(t,1)|\leq \frac{\lambda}{2}\quad \text{ for } t\in[0,T],~1-\eta\leq u\leq 1+\eta.
\end{equation}
 Define
\begin{equation*}
  \overline{k_i}:=\frac{1}{T}\int_{0}^{T}k_i(t)dt, ~~l_i(t):=\int_0^tk_i(s)ds \quad \text{ for }i=1,2
\end{equation*}
and 
\begin{equation*}
  l(t):=l_2(t)-l_1(t).
\end{equation*}
According to the relationship between $\overline{k_1}$ and $\overline{k_2}$, we divide the arguments into two cases.

\textbf{Case (i):} $\overline{k_1} =\overline{k_2}$. In this case, we show that $k_1\equiv k_2$ and $\Phi_1^*\equiv\Phi_2^*$.

Since $\overline{k_1} =\overline{k_2}$,  $l(t)$ is $T$-periodic in $t$ and $l(0)=l(T)=0$. Then there exists $t_0\in(0,T]$ such that 
\begin{equation*}
M_0:=\min_{t\in \mathbb R}l(t)=l(t_0+nT)\leq 0 \text{ for }n\in\mathbb Z.
\end{equation*}
 It follows that $ l(t)-M_0\geq 0$ for  $t\in\mathbb R$ and  $l(t_0+nT)-M_0=0$ for $n\in\mathbb N$. Thus, 
 \begin{equation}\label{eq-k12}
 l'(t_0+nT)=k_2(t_0+nT)-k_1(t_0+nT)=0 \text{ for }n\in\mathbb Z.
 \end{equation}
 Define  
 \begin{equation}\label{eq-rho}\begin{cases}
 \rho(t):=e^{\frac{\lambda }{2}(t-t_0+T)+\int_{t_0-T}^tf_u(s,1)ds} \text{ for }t\in[t_0-T,t_0],\\
  \sigma_0:=\displaystyle \frac{2\eta}{\|\rho\|_{L^\infty([t_0-T,t_0])}}.\end{cases}
 \end{equation}
 We note that $\rho(t_0-T)=1$ and $\sigma_0\leq 2\eta$.
 Since $\Phi^i(t,+\infty)=1$ uniformly in $t$, there exists $\xi_0>0$ large enough such that 
\begin{equation}\label{eq-phi-1}
|\Phi^i(t,x)-1|<\sigma_0/2 \quad \text{ for }t\in\mathbb R,~ x\geq\xi_0, ~i=1,2.
\end{equation} 
Moreover,  noting that $\delta<\Phi^i(t,x)<1$ for $t\in\mathbb R$ and $x>0$,  we can further choose $z_0>0$ sufficiently large such that   for every $ z\geq z_0$, it holds
\begin{equation}\label{eq-comp}
  \Phi^1(t,x)\leq \begin{cases}
    \Phi^2(t,x+l(t)-M_0+z), &t\in[t_0-T,t_0], ~x\in[0,\xi_0],\\
    \Phi^2(t,x+l(t)-M_0+z)+\sigma_0/2, &t\in[t_0-T,t_0],~ x\in(\xi_0,\infty).
  \end{cases}
\end{equation}
We will complete the proof in three steps.

\underline{Step 1.} We show that
\begin{equation}\label{sigmin}
  \sigma_{min}:=\inf\{\sigma>0:  \Phi^1(t_0-T,x)\leq \Phi^2(t_0-T,x+z)+\sigma \text{ for }x\geq 0, z\geq z_0\}=0.
\end{equation}

Thanks to \eqref{eq-comp},  we have $ \sigma_{min}\in[0,\sigma_0/2]$. We need to show that $\sigma_{min}=0$. 
For fixed $z\geq z_0$, let 
 \[\Phi^{2,z}(t,x):=\Phi^2(t,x+l(t)-M_0+z)+\sigma_{min}\rho(t).\]
 Then using \eqref{eq-rho} we obtain,
 by direct calculation,
\begin{align*}
  \mathcal{N}^1[\Phi^{2,z}]:&=\Phi^{2,z}_t-d\Phi^{2,z}_{xx}+k_1(t)\Phi^{2,z}_x-f(t,\Phi^{2,z})\\
  &=\sigma_{min} \rho'(t)+f(t,\Phi^2)-f(t,\Phi^{2,z})\nonumber \\
&=\Big[f_u(t,1)+\frac{\lambda}{2}-\int_0^1f_u(t,\Phi^{2}+\theta \sigma_{min} \rho   )d\theta\Big]\sigma_{min} \rho(t)\\
&\geq 0 \ \ \text{ for }t\in[t_0-T,t_0],\  x\in[\xi_0,\infty),\ 
\end{align*}
where, to reach the last inequality, we have used    \eqref{eq-fu} and the estimate
\begin{equation*}
 1-\eta \leq \Phi^{2}(t,x)+\theta(t,x) \sigma_{min} \rho(t)\leq 1+\frac{\sigma_0\|\rho\|_{L^\infty([t_0-T,t_0])}}{2}\leq 1+\eta \text{ for }t\in[t_0-T,t_0], x\in[\xi_0,\infty).
\end{equation*}
By  \eqref{eq-comp} and \eqref{sigmin}, we see that
\[
\Phi^1(t,x)\leq \Phi^{2,z}(t,x) \mbox{ on the parabolic boundary of } [t_0-T,t_0]\times(\xi_0,\infty).
\]
Therefore we can compare $\Phi^1(t,x)$ and $\Phi^{2,z}(t,x)$ over the region $[t_0-T,t_0]\times(\xi_0,\infty)$  to conclude, by the comparison principle, that
\begin{equation*}
  \Phi^1(t,x)\leq \Phi^{2,z}(t,x) \text{ for }t\in(t_0-T,t_0],\  x\in (\xi_0,\infty).
\end{equation*}
In particular, 
\begin{equation}\label{eq-psi12}
  \Phi^1(t_0,x)\leq\Phi^{2, z}(t_0,x)= \Phi^2(t_0,x+z)+\sigma_{min}\rho(t_0) \text{ for } x\in (\xi_0,\infty),\ z\geq z_0.
\end{equation}
Since  $\Phi^i(t,x), i=1,2, $ are periodic in $t$, it follows that
\begin{equation*}
  \Phi^1(t_0-T,x)\leq  \Phi^2(t_0-T,x+z)+\sigma_{min}\rho(t_0) \text{ for } x\in (\xi_0,\infty),  z\geq z_0.
\end{equation*}
This, together with \eqref{eq-comp} for $x\in[0,\xi_0]$ and the definition of  $\sigma_{min}$, implies that $\sigma_{min}\leq \sigma_{min} \rho(t_0)$. As $\rho(t_0)<1$, we must have $\sigma_{\min}=0$, as desired.
It follows that 
\begin{equation*}
  \Phi^1(t_0-T,x)\leq \Phi^2(t_0-T,x+z) \text{ for } x\geq 0,\ z\geq z_0.
\end{equation*}

\underline{Step 2.} We prove that
\begin{equation*}
  z_{min}:=\inf\{\tilde{z}>0:\Phi^1(t_0-T,x)\leq \Phi^2(t_0-T,x+z) \text{ for } x\geq 0,\ z\geq \tilde{z}\}=0.
\end{equation*}

Clearly, $z_{min}\in [0,z_0]$.   Suppose to the contrary that $z_{min}>0$.  Since   $\Phi^{2,z}(t,x)=\Phi^2(t,x+l(t)-M_0+z)$  satisfies, for each $z\geq z_{min}>0$,
\begin{equation*}
  \begin{cases}
    \Phi^{2,z}_t-d\Phi^{2,z}_{xx}+k_1(t)\Phi^{2,z}_x=f(t,\Phi^{2,z}), &t\in[t_0-T,t_0], x>M_0-l(t)-z, \\
    \Phi^{2,z}(t,0)\geq \Phi^2(t,z)> \delta, &t\in[t_0-T,t_0],
  \end{cases}
\end{equation*}
one deduces from the strong maximum principle and the definition of $z_{min}$ that 
\begin{equation*}
  \Phi^1(t,x)<\Phi^{2,z}(t,x)= \Phi^2(t,x+l(t)-M_0+z)~~ \text{ for }t\in(t_0-T,t_0],\ x\geq 0,\ z\geq z_{min}.
\end{equation*} 
This implies, by the continuity of $\Phi^2$ and the fact that $\Phi^i(t,\infty)=1$ uniformly in $t\in\mathbb R$, that   \eqref{eq-comp} holds  for every $z\geq z_{min}-\epsilon$ provided that $\epsilon>0$ is small enough. We may now repeat the arguments in  Step 1 to obtain
\begin{equation*}
  \Phi^1(t_0-T,x)\leq \Phi^2(t_0-T,x+z)~~ \text{ for } x\geq 0,\ z\geq z_{min}-\epsilon.
\end{equation*}
But this contradicts the definition of $z_{min}$. Hence $z_{min}=0$.

\underline{Step 3.} We show that  $k_1\equiv k_2$ and $\Phi^1\equiv \Phi^2$.

By Step 2 and $l(t)-M_0\geq 0$  we obtain 
\begin{equation*}\begin{cases}
  \Phi^1(t_0-T,x)\leq \Phi^2(t_0-T,x)=\Phi^2(t_0-T, x+l(t_0-T)-M_0)=\Phi^{2,0}(t_0-T,x) \text{ for } x\geq 0,\\
  \Phi^1(t,0)=\delta=\Phi^2(t,0)\leq \Phi^2(t,l(t)-M_0)=\Phi^{2,0}(t,0) \text{ for }t\in[t_0-T,t_0].
  \end{cases}
\end{equation*}
Therefore, unless 
\begin{equation}\label{equiv}
 \begin{cases} \Phi^1(t_0-T,x)\equiv \Phi^2(t_0-T,x) \text{ for } x\geq 0,\\
 l(t)\equiv M_0 \mbox{ for } t\in\mathbb R,
 \end{cases}
\end{equation}
we can apply the strong maximum principle to $\Phi^1(t,x)-\Phi^{2,0}(t,x)$ over $(t_0-T,t_0]\times[0,\infty)$ to deduce
\begin{equation*}
  \Phi^1(t,x)< \Phi^{2,0}(t,x) \text{ for }t\in(t_0-T,t_0],\ x\in(0,\infty).
\end{equation*}
Since  
\begin{equation*}
 \Phi^1(t_0,0)=\delta=\Phi^2(t_0,0)=\Phi^{2,0}(t_0,0), 
\end{equation*}
the above inequality and the Hopf boundary lemma imply that 
\begin{equation*}
  \Phi^1_x(t_0,0)<\Phi^{2,0}_x(t_0,0)=\Phi^2_x(t_0,0),
\end{equation*}
which in turn implies $k_1(t_0)<k_2(t_0)$ by \eqref{eq-phi12}, so we reach a contradiction to \eqref{eq-k12}. Hence \eqref{equiv} holds, which leads to  $k_1\equiv k_2$ and $\Phi^1\equiv \Phi^2$, as desired.

\textbf{Case (ii):} $\overline{k_1} \neq \overline{k_2}$. In this case, we derive a contradiction, which means  this case can never happen.

Without loss of generality, we assume that $\overline{k_1}>\overline{k_2}$, as the other  case can be argued symmetrically. Then 
\begin{equation*}
    \lim_{t\to-\infty}\frac{l_1(t)}{t}=\overline{k_1}>\overline{k_2} =\lim_{t\to-\infty}\frac{l_2(t)}{t}.
\end{equation*}
Noting  $l_1(0)=l_2(0)=0$, there exists $t_0\leq 0$ such that 
\begin{equation*}
l(t):=l_2(t)-l_1(t)>0 \text{ for }t<t_0,  \quad   l(t_0)=0,   
\end{equation*}
and therefore
\begin{equation}\label{eq-k122}
  l'(t_0)=k_2(t_0)-k_1(t_0)\leq 0.
\end{equation}

Similarly to Case (i),  we can choose $z_0$ large enough such that,   for each $ z\geq z_0 $,
\begin{equation}\label{eq-ps12}
  \Phi^1(t,x)\leq \begin{cases}
    \Phi^2(t,x+l(t)+z), &t\in[t_0-T,t_0],\  x\in[0,\xi_0],\\
    \Phi^2(t,x+l(t)+z)+\sigma_0/2, &t\in[t_0-T,t_0],\ x\in(\xi_0, +\infty),
  \end{cases}
\end{equation}
with  $\sigma_0$ and $\xi_0$  given by \eqref{eq-rho} and \eqref{eq-phi-1}, respectively.
Define 
\begin{equation*}
  \tilde{\sigma}_{min}:=\inf\{\sigma>0:  \Phi^1(t_0-T,x)\leq \Phi^2(t_0-T,x+l(t_0-T)+z)+\sigma \text{ for }x\geq 0, z\geq z_0\}.
\end{equation*}
Then $\tilde{\sigma}_{min}\in[0,\sigma_0/2]$.
Let $\rho(t)$ be  given by \eqref{eq-rho}. By an argument analogous to Step~1 of Case~(i) and comparing  $\Phi^1(t,x)$ and $\Phi^2(t,x+l(t)+z)+\tilde{\sigma}_{min}\rho(t)$ over the region   $[t_0-T,t_0]\times[\xi_0,\infty)$, we obtain 
 \begin{equation*}
  \Phi^1(t_0,x)\leq \Phi^2(t_0,x+l(t_0)+z)+\tilde{\sigma}_{min}\rho(t_0)  \text{ for } x\in [\xi_0,\infty),\ z\geq z_0.
\end{equation*}
Since $l(t_0-T)>0=l(t_0)$ and $\Phi^2$ is increasing in $z$,  it follows that 
  \begin{equation*}
  \Phi^1(t_0,x)\leq \Phi^2(t_0,x+l(t_0-T)+z)+\tilde{\sigma}_{min}\rho(t_0)   \text{ for } x\in [\xi_0,\infty),\ z\geq z_0.
\end{equation*}
By  the periodicity of $\Phi^{i}$ and  \eqref{eq-ps12}, we further obtain
 \begin{equation*}
  \Phi^1(t_0-T,x)\leq \Phi^2(t_0-T,x+l(t_0-T)+z)+\tilde{\sigma}_{min} \rho(t_0)  \text{ for } x\in [0,\infty),\ z\geq z_0.
\end{equation*}
From the definition of $\tilde{\sigma}_{min}$, we conclude that $\tilde{\sigma}_{min}\leq \tilde{\sigma}_{min}\rho(t_0)$. As $\rho(t_0)<1$, this implies $\tilde{\sigma}_{min}=0$.

Then, we can argue as in Step 2 of Case (i) to deduce
 \begin{equation}\label{eq-piz12}
  \Phi^1(t_0-T,x)\leq \Phi^2(t_0-T,x+l(t_0-T)+z) \text{ for } x\in [0,\infty),\ z\geq 0.
\end{equation}
Applying the strong maximum principle and Hopf boundary lemma to $\Phi^1(t,x)$ and  $\Phi^2(t,x+l(t))$ over $(t_0-T,t_0]\times [0,\infty)$, we  deduce that 
 \begin{equation*}
  \Phi^1(t,x)< \Phi^2(t,x+l(t)), ~~ \Phi^1_x(t_0,0)<\Phi^2_x(t_0,0) \text{ for }t\in(t_0-T,t_0],\ x\in (0,\infty).
\end{equation*}
This leads to, by \eqref{eq-phi12}, 
\begin{equation*}
  k_1(t_0)<k_2(t_0),
\end{equation*}
 which contradicts \eqref{eq-k122}. The proof is thus complete.
\end{proof}

\begin{lemma}\label{lem-1-phi}
 Let $(k^*(t),\Phi^*(t,x))$ be the unique semi-wave pair in Theorem \ref{theo-semi-wave}, and
  \[ \overline{k^*}:=\frac{1}{T}\int_0^Tk^*(s)ds,~~a_0:=-\frac{1}{T}\int_0^Tf_u(s,1)ds,\ \mu_0:=\frac{-\overline{k^*}+\sqrt{\overline{k^*}^2+4a_0d}}{2d}.\]
  Then for any $\mu\in (0,\mu_0)$, there exists $N_1=N_1(\mu)>0$ such that
  \begin{equation}\label{phi-infty}
    |1-\Phi^*(t,x)|+|\Phi_x^*(t,x)|+|\Phi_{xx}^*(t,x)|+|\Phi_t^*(t,x)|\leq N_1 e^{-\mu x} ~\text{  for  }t\in[0,T],~x\in[0,\infty).
  \end{equation}
 
\end{lemma}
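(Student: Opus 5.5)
The plan is to first prove the exponential decay of $w:=1-\Phi^*$ by a comparison argument with an explicit time-periodic supersolution. The derivative bounds will then follow from parabolic regularity.

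\textbf{Setting up the linearised problem.} Since $\Phi^*\to1$ uniformly in $t$ and $\delta<\Phi^*<1$, the function $w>0$ satisfies
\[
w_t-dw_{xx}+k^*(t)w_x=-f(t,\Phi^*)=c(t,x)w,\qquad c(t,x):=\int_0^1 f_u\bigl(t,1-\theta w\bigr)d\theta .
\]
Given $\mu\in(0,\mu_0)$, the defining equation of $\mu_0$ gives $d\mu^2+\overline{k^*}\mu<a_0$. So I fix $\epsilon>0$ with
\[
d\mu^2+\overline{k^*}\mu+\epsilon<a_0 .
\]
By uniform continuity of $f_u$ near $u=1$, I then choose $\xi_1$ so large that $c(t,x)\le f_u(t,1)+\epsilon$ for $x\ge\xi_1$.

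\textbf{The periodic supersolution.} I look for $\bar w(t,x)=A\,p(t)e^{-\mu x}$ with $p>0$ and $T$-periodic. Substituting, the supersolution inequality on $x\ge \xi_1$ reduces to
\[
p'\ \ge\ \bigl(d\mu^2+\mu k^*(t)+f_u(t,1)+\epsilon\bigr)p .
\]
I take equality in this ODE. Let $q(t)$ denote the coefficient on the right. Its mean is $d\mu^2+\mu\overline{k^*}-a_0+\epsilon<0$, so it is not zero, and the pure exponential solution $\exp\bigl(\int_0^t q\bigr)$ is not periodic. I therefore correct by the mean and set
\[
p(t)=\exp\Bigl(\int_0^t\bigl(q(s)-\bar q\bigr)ds\Bigr),\qquad \bar q:=\frac1T\int_0^T q .
\]
This $p$ is positive and $T$-periodic, and it satisfies $p'=(q-\bar q)p\ge qp$ because $\bar q<0$. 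This is exactly where the restriction $\mu<\mu_0$ enters, and mirrors the role of $\rho$ in the proof of Theorem~\ref{theo-semi-wave}.

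\textbf{Comparison step.} I choose $A$ large enough that $\bar w(t,\xi_1)\ge 1-\delta\ge w(t,\xi_1)$. I then compare $w$ and $\bar w$ on $[0,\infty)\times[\xi_1,\infty)$, using the time-periodicity of $w$.

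The main obstacle is that there is no initial time at which the ordering is known. I would handle this by the same device as in Step 1 of the proof of Theorem~\ref{theo-semi-wave}.
\begin{itemize}
\item Since $w\to0$ uniformly, both $w$ and $\bar w$ tend to $0$ as $x\to\infty$.
\item Consider $\sigma_{\min}=\inf\{\sigma\ge0: w(0,\cdot)\le\bar w(0,\cdot)+\sigma\}$ on $[\xi_1,\infty)$.
\item Propagate the inequality over one period, replacing the constant $\sigma$ by a decaying multiple $\sigma\rho(t)$ with $\rho(T)<1$, which is legitimate near $u=1$ by \eqref{eq-fu}.
\item Periodicity then gives $\sigma_{\min}\le\sigma_{\min}\rho(T)$, hence $\sigma_{\min}=0$.
\end{itemize}
A simpler alternative is to run the comparison from $t=-nT$ with the ordering only known up to $+1$. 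Over $n$ periods the error term decays like $\rho^n$, and periodicity lets me send $n\to\infty$. Either route yields $1-\Phi^*\le A\max p\,e^{-\mu x}$ for $x\ge\xi_1$. For $x\in[0,\xi_1]$ the bound is trivial after enlarging the constant.

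\textbf{Derivative bounds.} For $\Phi^*_x$, $\Phi^*_{xx}$ and $\Phi^*_t$ I apply interior parabolic Schauder (or $L^p$) estimates to $w$. The equation is linear with bounded coefficients: $k^*$ is bounded and H\"older in $t$ by \eqref{f-smooth}, and $c$ is bounded. I work on the cylinders $[t-T,t+T]\times[x-1,x+1]$ for $x\ge1$, where periodicity lets me extend in $t$. This gives
\[
\|w\|_{C^{1+\alpha/2,2+\alpha}}\le C\sup_{\text{cylinder}}|w|\le C'e^{-\mu x}.
\]
Near $x=0$ the derivatives are bounded by boundary estimates, so enlarging $N_1$ yields \eqref{phi-infty}. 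One point needs care: $c$ must be H\"older in $x,t$ for Schauder estimates. If only continuity is available, I would use $W^{2,1}_p$ estimates together with embedding, bootstrapping to obtain $\Phi_t$ and $\Phi_{xx}$ bounds. Should this fail, I would fall back to a slightly smaller $\mu'\in(\mu,\mu_0)$ in the decay bound, which costs nothing.
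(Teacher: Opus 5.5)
Your proposal is correct and follows essentially the paper's route. If you undo the paper's substitution $U=(1-\Phi^*(t,x+H))\phi(t)$ with $\phi(t)=e^{-a_0t-\int_0^tf_u(s,1)ds}$, its barrier $e^{-\mu[x+M+\overline{k^*}t-\int_0^tk^*(s)ds]}$ is exactly your $A\,p(t)e^{-\mu x}$, with your $\xi_1$ playing the role of $H$. The derivative bounds come from the same local parabolic estimates.

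The one real difference is how the missing initial ordering is removed. You use a contracting additive term $\sigma\rho(t)$, or start at $t=-nT$, as in Step~1 of the proof of Theorem~\ref{theo-semi-wave}. The paper instead slides the constant $q$ in $\bar U_q$, using that the zeroth-order coefficient satisfies $a_0+C\ge a>0$. Your remark that Schauder estimates need H\"older control of $f(t,\Phi^*)$ in $t$, fixable by giving up a little of the exponent, addresses a point the paper passes over.
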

\begin{proof}
Define
\[
\begin{cases}
\phi(t):=e^{-a_0t-\int_0^tf_u(s,1)ds} &\text{ for }t\geq0,\\
U(t,x):=(1-\Phi^*(t,x+H))\phi(t)&\text{ for } t\geq 0,\ x\geq 0,
\end{cases}\]
with the constant $H>0$  to be determined later.  Clearly, $\phi$ and $U$ are $T$-periodic in $t$. 
Direct calculations give 
\begin{eqnarray*}
  &&U_t(t,x)=-\Phi^*_t(t,x+H)\phi(t)-[a_0+f_u(t,1)]U(t,x);\\
  &&U_x(t,x)=-\Phi^*_x(t,x+H)\phi(t), ~~U_{xx}(t,x)=-\Phi^*_{xx}(t,x+H)\phi(t).
\end{eqnarray*}
Denote 
\[C(t,x)=C_H(t,x):=\frac{f(t,\Phi^*(t,x+H))+f_u(t,1)(1-\Phi^*(t,x+H))}{1-\Phi^*(t,x+H)}.\]
Since $f(t,u)$ is $C^1$ in $u$, we have  
\begin{equation*}
  f(t,\Phi^*)+f_u(t,1)(1-\Phi^*)=o(|\Phi^*-1|) \ \text{ as } |\Phi^*-1|\to 0.
\end{equation*}
For any fixed $\mu\in(0,\mu_0)$, let $a:=d\mu^2+\overline{k^*}\mu$.
Then  $a\in(0,a_0)$  and there exists $H>0$ large enough such that  
\begin{eqnarray*}
  |C(t,x)|\leq a_0-a \ \text{ for }t\in[0,T],\ x\in[0,\infty).
\end{eqnarray*}
Moreover, 
\begin{equation}\label{eq-Lu}\begin{aligned}
  \mathcal{L}[U]&:=U_t-dU_{xx}+k^*(t)U_x+a_0 U+C(t,x)U \\
  &= -\phi[\Phi^*_t-d\Phi^*_{xx}+k^*(t)\Phi^*_x]-f_u(t,1)U+\frac{f(t,\Phi^*)+f_u(t,1)(1-\Phi^*)}{1-\Phi^*}U  \\
  &= -\phi f(t,\Phi^*)-f_u(t,1)U+[f(t,\Phi^*)+f_u(t,1)(1-\Phi^*)]\phi\\
  &=0\ \  \mbox{ for } t\in [0, T],\ x\geq 0.
  \end{aligned}
\end{equation}

Let us now estimate $U$ by constructing an appropriate upper solution. Set 
\[M:= \max_{t\in[0,T]}\left|\overline{k^*}t-\int_0^tk^*(s)ds\right|.\]
For fixed $\mu\in(0,\mu_0)$,  $q\in[0,\infty)$, and variables $t\in[0,T], \ x\in[0,\infty)$,  define the function $\bar U(t,x)=\bar U_q(t,x)$ by
\[\bar{U}_q(t,x):= e^{-\mu[x+M+\overline{k^*}t-\int_0^tk^*(s)ds]}+ q.\]
By direct calculation we have
\begin{eqnarray*}
 && \bar{U}_t= -\mu  [\overline{k^*}-k^*(t)]e^{-\mu[x+M+\overline{k^*}t-\int_0^tk^*(s)ds]};\\
&&\bar{U}_x=-\mu   e^{-\mu[x+M+\overline{k^*}t-\int_0^tk^*(s)ds]};\\
&& \bar{U}_{xx}= \mu^2  e^{-\mu[x+M+\overline{k^*}t-\int_0^tk^*(s)ds]}.
\end{eqnarray*}
Hence, 
\begin{equation}\label{bar-U}
\begin{aligned}
  \mathcal{L}[\bar{U}]=&\ \bar{U}_t-d\bar{U}_{xx}+k^*(t)\bar{U}_x+a_0 \bar{U}+C(t,x)\bar{U} \\
  =&\ e^{-\mu[x+M+\overline{k^*}t-\int_0^tk^*(s)ds]}\Big[-\mu(\overline{k^*}-k^*(t))-d\mu^2-k^*(t)\mu +a_0+C\Big]+(a_0+C)q\\
  =&\ e^{-\mu[x+M+\overline{k^*}t-\int_0^tk^*(s)ds]}\big[-a+a_0+C\big]+(a_0+C)q\\
  \geq&\  0 \ \  \mbox{ for } t\in [0, T],\ x\geq 0.
\end{aligned}
\end{equation}

Denote
\[m_0:=\min_{t\in[0,T]} e^{-\mu(M+\overline{k^*}t-\int_0^tk^*(s)ds)}. \]
Then $m_0\in(0,1]$. Since $\Phi^*(t,\infty)=1$ uniformly in $t\in\mathbb R$, by increasing $H$ if necessary, we have
\[U(t,x)< m_0   \ \text{ for }t\in[0,T],~ x\in[0,\infty).\] Thus, for any fixed $q\geq m_0$, we have
\[U(t,x)\leq \bar{U}_q(t,x)\text{ for  }t\in[0,T],\ x\in[0,\infty).\]
Therefore, the constant 
\[q_0:=\inf\left\{q\in\left(0,\infty\right):U(0,x)\leq \bar{U}_q(0,x)\text{ for  }x\in[0,\infty)\right\}\]
is well-defined and 
\[U(0,x)\leq \bar{U}_{q_0}(0,x)\text{ for  } x\in[0,\infty).\]
  We claim that $q_0=0$.  Assume to the contrary that $q_0>0$. Then for $t\in[0,T]$ we have
\[\bar{U}_{q_0}(t,0)>e^{-\mu[M+\overline{k^*}t-\int_0^tk^*(s)ds]}\geq   m_0\geq U(t,0).\]
The above inequalities at the parabolic boundaries of the region $[0,T]\times [0,\infty)$, together with \eqref{eq-Lu} and \eqref{bar-U}, allow us to use the strong maximum principle to conclude that 
\[U(t,x)< \bar{U}_{q_0}(t,x) \text{ for }t\in(0,T],\ x\in[0,\infty),\]
which leads to
\[U(0,x)=U(T,x)<\bar{U}_{q_0}(T,x)=\bar{U}_{q_0}(0,x) \text{ for }  x\in[0,\infty).\]
Combining this with 
\[U(0,\infty)=0<q_0=\bar{U}_{q_0}(0,\infty),\]
we easily see  that
\[U(0,x)\leq \bar{U}_{q_0-\epsilon}(0,x)\text{ for  }x\in[0,\infty) \mbox{ and some small } \epsilon>0.\]
This contradicts the definition of $q_0$. Hence, $q_0=0$ and we can again apply the  comparison principle  to $U$ and $\bar{U}_0$  over  $[0,T]\times[0,\infty)$ to deduce
\begin{eqnarray*}
  U(t,x)\leq \bar{U}_0(t,x)= e^{-\mu[x+M+\overline{k^*}t-\int_0^tk^*(s)ds]} \leq  e^{-\mu x} \  \text{ for }t\in(0,T],\ x\in[0,\infty).
\end{eqnarray*}
Since $\phi(t)$ is a positive periodic function, this implies, by the definition of $U$, 
\[0\leq 1-\Phi^*(t,x)\leq C_1e^{-\mu x} \text{ for all } t\in[0,T],~x\in[0,\infty) \mbox{ and some } C_1>0.\] 
Because $w:=\Phi^*-1$ satisfies 
\[\begin{cases}
w_t-dw_{xx}+k(t)w_x=f(t,\Phi^*)-f(t,1)=f_u(t,\theta(t,x))w& \mbox{ for } \ t\in\mathbb R, \ x>0,\\
w=\delta-1& \mbox{ for }\ t\in\mathbb R,\ x=0,
\end{cases}\]
with $\theta(t,x)\in[\Phi^*(t,x),1]$,  it follows from the standard parabolic $L^p$ estimates and Sobolev embedding theorems, that  there exists $N_1>0$ such that, for any $y\in[0,\infty)$,
\[\|w\|_{C^{1, 2}([T,2T]\times[y,\ y+1])}\leq C_2 \|w\|_{L^{\infty}(\mathbb R\times [\max\{0,\, y-1\},\ y+2])}\leq N_1e^{-\mu y},\]
which implies the desired result.
\end{proof}

\section{Proof of Theorem \ref{thm-exact}}

In this section, we determine the precise asymptotic spreading speed and propagation profile for the solution of \eqref{free-bound}. 
By carefully constructing suitable upper and lower solutions, following the style of \cite{fife77}, we  derive a uniform bound for
\(|g(t) + \int_0^t k^*(s)\,ds|\) and \(|h(t) - \int_0^t k^*(s)\,ds|\) in Subsections~4.1 and~4.2.  
Making use of these bounds, we then prove  in Subsection~4.3 that along a sequence of time $t_n\to\infty$, the solution  \(u\) under suitable translations in both \(t\) and \(x\) converges to a certain limit function. 
Finally, in Subsections~4.4 and~4.5, through some careful analysis based on various estimates, we show that the limit function obtained in Subsection 4.3  coincides with  a spatial translation and reflection of the semi-wave, which enables us to refine the upper solutions constructed in Subsection 4.1 to deduce the desired convergence result. The strategy of our approach here is inspired by  \cite{DMZ,DMZ2, D}.

\subsection{Upper bound}
In view of $\Phi^{*}\in  C^{1+\alpha/2,2+\alpha}([0,T]\times[0,\infty))$, we have
\[k^*(\cdot)=\frac{d}{\delta}\Phi^{*}_x (\cdot,0)\in C^{1+\alpha/2}([0,T]).\]
Let
\begin{eqnarray*}
  \begin{cases} M:= \displaystyle \frac{1}{2d}\left[\max_{(t,u)\in[0,T]\times[0,\delta]}f(t,u)+\frac{\delta }{d}\|k^*\|_{C^{1}([0,T])}\right],\\[2mm]
   \Upsilon(t,x):=-Mx^2+\frac{\delta}{d}k^*(t)x+\delta.
   \end{cases}
\end{eqnarray*}
Clearly $\Upsilon(t, 0)=\delta>0$.
Let  $a\in(0,1)$ be any fixed number small enough such that  $\Upsilon(t,x)>0$ for $(t,x)\in[0,T]\times[-a,0]$ and  
\begin{equation*}
  \delta_a:= \max_{t\in[0,T]}\Upsilon(t,-a).
\end{equation*} 
Define 
\begin{equation*}
  \bar{\Phi}(t,x):=\begin{cases}
  \Upsilon(t,x)& \text{ for }-a\leq x\leq 0,\\
  \Phi^*(t,x) &\text{ for }x\geq 0.
  \end{cases}
\end{equation*}
Clearly, $\delta_a<\delta$ and $\bar{\Phi}\in C^1([0,T]\times[-a,\infty))$ is strictly increasing in $x$. Denote
\begin{equation}\label{eq-k+-}
   k_{min}:=\min_{t\in[0,T]}k^*(t),\  k_{max}:=\max_{t\in[0,T]}k^*(t).
\end{equation}
For $t\in [0,T],\ x\in[-a,0]$, direct computations give
\begin{equation}\label{eq-upes}
  \begin{cases}
    0<\frac{\delta}{d} k_{min} \leq  \bar{\Phi}_x(t,x)=\Upsilon_x(t,x)=-2Mx+\frac{\delta}{d}k^*(t)\leq2M a+\frac{\delta}{d}k_{max}; \\
|\bar{\Phi}_t(t,x)|=|\Upsilon_t(t,x)|=\frac{\delta}{d}|k^{*}(t)'x|\leq \frac{\delta}{d}\|k^*\|_{C^1([0,T])}|x|.
  \end{cases}
\end{equation}

Due to $f_u(t,1)<0$  and $f\in C^1$ in $u$ uniformly with respect to $t$, there exist $\sigma_0>0$  and $\eta>0$ such that 
\begin{equation}\label{eq-f_usig}
  f_u(t,u)\leq -\sigma_0 \text{ for }t\in[0,T],\ u\in[1-\eta,1+\eta].
\end{equation}
Fix $q_0\in(0,\min\{\eta, \delta-\delta_a \})$. Since $\Phi^*(t,\infty)=1$ uniformly in $t\in\mathbb{R}$, we can find  $K_0\gg 1$  such that 
\begin{equation}\label{eq-1q_0}
    \Phi^*(t,x)\geq 1-\frac{q_0}{2}\geq  1-\eta ~\text{ for } t\in\mathbb{R}, ~x\geq K_0.
\end{equation}
By Proposition \ref{prop-summ} (iv),  there exists  $\tau^*>0$  such that
\begin{equation*}
  -g(t),h(t)>0 \text{ and }u(t,x)\leq 1+\frac{q_0}{2}~ \text{ for } t\geq \tau^*,\ x\in[g(t),h(t)].
\end{equation*}

Now, we define
\begin{equation}\label{eq-ups}
  \begin{cases}
    \displaystyle\bar{h}(t):=\int_{\tau^*}^{t}k^*(s)ds+\gamma(1-e^{-\sigma_0(t-\tau^{*})})+h(\tau^*)+a+K_0 &\text{ for } t>\tau^*,\\[2mm]
    \bar{u}(t,x):=\bar{\Phi}(t, \bar{h}(t)-x+z_\delta(t))+ q_0e^{-\sigma_0(t-\tau^{*})}&\text{ for }t>\tau^*,\ x\in[g(t),\bar{h}(t)],
  \end{cases}
\end{equation}
where   $\gamma>0$ is a  constant to be determined later, and  the function $z_\delta(t)$ is determined by  
\begin{equation}\label{zdelta}
\bar{\Phi}(t,z_\delta(t))+q_0e^{-\sigma_0(t-\tau^{*})}=\delta.
\end{equation}
In view of the restriction on $q_0$, we know that  $z_\delta(t)\in(-a,0)$ is well-defined  for $t\geq\tau^*$. Moreover, $z_\delta(t)\to 0$ as $t\to\infty$ and 
\begin{equation}
  -q_0e^{-\sigma_0(t-\tau^{*})}=\bar{\Phi}(t,z_\delta(t))-\bar{\Phi}(t,0)= \bar{\Phi}_x(t,\theta(t))z_\delta(t)
\end{equation}
for some  $\theta(t)\in[z_\delta(t),0]$. It then  follows that  
\begin{equation}\label{eq-zdt}
  |z_\delta(t)|\leq B_0q_0e^{-\sigma_0(t-\tau^*)}\text{ for } t\geq \tau^*, 
\end{equation}
where 
\begin{equation*}
  B_0:=\frac{1 }{\min_{(t,x)\in[0,T]\times[-a,0]}\bar{\Phi}_x(t,x)}=\frac{d }{\delta k_{min}}.
\end{equation*}
Differentiating \eqref{zdelta} we obtain
\begin{equation*}
  \bar{\Phi}_x(t,z_\delta(t))z_\delta'(t)= -\bar{\Phi}_t(t,z_\delta(t))+ \sigma_0q_0 e^{-\sigma_0(t-\tau^*)}.
  \end{equation*}
This, together with \eqref{eq-upes} and  $z_\delta(t)\in(-a,0)$, implies that 
\begin{eqnarray*}
  \frac{\delta}{d} k_{min}|z_\delta'(t)|\leq|\bar{\Phi}_x(t,z_\delta(t)) z_\delta'(t)|\leq \frac{\delta}{d}\|k^*\|_{C^1([0,T])}|z_\delta(t)|+ \sigma_0q_0 e^{-\sigma_0(t-\tau^*)}.
\end{eqnarray*}
Therefore, by  \eqref{eq-zdt} we obtain
\begin{equation}\label{eq-zd'}
\begin{aligned}
  |z_\delta'(t)|&\leq  \frac{d}{\delta  k_{min}}\left( \frac{\delta}{d}\|k^*\|_{C^1([0,T])}B_0q_0e^{-\sigma_0(t-\tau^*)}+ \sigma_0q_0 e^{-\sigma_0(t-\tau^*)}\right)\\
  &\leq   B_1q_0(1+\sigma_0)e^{-\sigma_0(t-\tau^*)}
  \end{aligned}
\end{equation}
with  
\begin{equation*}
  B_1:=\frac{B_0\delta \|k^*\|_{C^1([0,T])}+d}{ \delta k_{min}}.
\end{equation*}

\begin{lemma}\label{lemma-upp}
There exists a positive constant $\gamma$ such that $\bar{u}$ and $\bar{h}$  given by \eqref{eq-ups} satisfy
\begin{equation*}
  \begin{cases}
    h(t)\leq \bar{h}(t) &\text{ for }t\geq \tau^*,\\
u(t,x)\leq \bar{u}(t,x) &\text{ for }t\geq\tau^*, ~x\in[g(t),h(t)].
  \end{cases}
\end{equation*}
\end{lemma}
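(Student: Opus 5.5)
We apply the one-sided comparison principle, Lemma \ref{lem-comp}, on $[\tau^*,\tau]$ for every $\tau>\tau^*$, after shifting the initial time from $0$ to $\tau^*$. The upper solution is $(\bar u,\bar g,\bar h)$ with $\bar g(t):=g(t)$ and $\bar h$ as in \eqref{eq-ups}. We then verify the five conditions in \eqref{eq-com}. Most are immediate from the construction; the differential inequality forces the choice of $\gamma$.

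\emph{Boundary and initial conditions.} At $x=\bar h(t)$ we have $\bar u=\bar\Phi(t,z_\delta(t))+q_0e^{-\sigma_0(t-\tau^*)}=\delta$ by \eqref{zdelta}.

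For the free boundary condition, note $-\frac d\delta\bar u_x=\frac d\delta\bar\Phi_x(t,z_\delta(t))$. Since $z_\delta(t)\in(-a,0)$, this lies in the $\Upsilon$-region, and by \eqref{eq-upes}
\[
\bar\Phi_x(t,z_\delta)=\tfrac{\delta}{d}k^*(t)-2Mz_\delta\le \tfrac\delta d k^*(t)+2MB_0q_0e^{-\sigma_0(t-\tau^*)}.
\]
Hence
\[
\bar h'(t)=k^*(t)+\gamma\sigma_0e^{-\sigma_0(t-\tau^*)}\ge -\tfrac d\delta\bar u_x
\]
holds as soon as $\gamma\sigma_0\ge 2dMB_0q_0/\delta$.

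At the left end $x=g(t)$ we must check $\bar u\ge u$, and we also need this on $x=h(t)$ whenever $h\le\bar h$. Here we use $u\le 1+q_0/2$ for $t\ge\tau^*$. Because $g(t)<0<\bar h(t)$ and $\bar h(t)-g(t)\ge K_0$ (to be checked: $\bar h(t)\ge h(\tau^*)+K_0+\int_{\tau^*}^tk^*$), the point $x=g(t)$ has argument $\bar h-x+z_\delta\ge K_0$ once we control things. Then $\bar\Phi\ge 1-q_0/2$ by \eqref{eq-1q_0}. This only gives $\bar u\ge 1-q_0/2+q_0e^{-\sigma_0(t-\tau^*)}$, which fails for large $t$.

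So the boundary comparison at $x=g(t)$ must instead come from the solution itself. Near $g(t)$ we have $u(t,g(t))=\delta$. One natural fix is to use $\bar g=g$ and check $\bar u(t,g(t))\ge\delta=u(t,g(t))$. This holds since $\bar h-g+z_\delta>0$ gives $\bar\Phi(\cdot)\ge\delta$.

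At $t=\tau^*$, for $x\in[g(\tau^*),h(\tau^*)]$ the argument is $\ge a+K_0+z_\delta\ge K_0$. So $\bar u\ge 1-q_0/2+q_0>u(\tau^*,x)$, and $h(\tau^*)<\bar h(\tau^*)$.

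\emph{Differential inequality.} Set $\xi=\bar h(t)-x+z_\delta(t)$ and $\varepsilon(t)=q_0e^{-\sigma_0(t-\tau^*)}$. Using the equation for $\Phi^*$ with $k^*$,
\[
\bar u_t-d\bar u_{xx}-f(t,\bar u)=\bar\Phi_t+(\bar h'+z_\delta')\bar\Phi_\xi-d\bar\Phi_{\xi\xi}-\sigma_0\varepsilon-f(t,\bar u).
\]
We split into three regions.

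\emph{(a) $\xi\ge K_0$.} Here $\bar\Phi\in[1-\eta,1]$ and $\bar u\in[1-\eta,1+\eta]$. Then $f(t,\bar\Phi)-f(t,\bar u)\ge\sigma_0\varepsilon$ by \eqref{eq-f_usig}, and the remaining term $(\gamma\sigma_0\varepsilon/q_0+z_\delta')\bar\Phi_\xi$ is $\ge0$ once $\gamma\sigma_0\ge B_1q_0(1+\sigma_0)$ by \eqref{eq-zd'}, since $\bar\Phi_\xi>0$.

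\emph{(b) $0\le\xi\le K_0$.} This is the main obstacle, because $f(t,\bar\Phi)-f(t,\bar u)\ge -L\varepsilon$ has the wrong sign, with $L$ the Lipschitz constant of $f$. It is compensated by $\min_{[0,T]\times[0,K_0]}\Phi^*_x=:\beta>0$, which is positive by Theorem \ref{theo-semi-wave}. The condition $(\gamma\sigma_0\varepsilon/q_0-|z_\delta'|)\beta\ge(L+\sigma_0)\varepsilon$ holds for $\gamma$ large.

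\emph{(c) $\xi\in[z_\delta,0]$.} Here $\bar\Phi=\Upsilon$, and the choice of $M$ makes $-d\Upsilon_{\xi\xi}=2dM$ dominate $|\Upsilon_t|+f$. The term $\bar h'\Upsilon_\xi\ge k^*\Upsilon_\xi>0$ plus the $\gamma$ term absorbs $\sigma_0\varepsilon$ and the $z_\delta'$ term.

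Across $\xi=0$, $\bar\Phi$ is only $C^1$. This is harmless: the kink is convex in the correct direction, or one uses the standard weak form of Lemma \ref{lem-comp} for piecewise-smooth upper solutions. Taking $\gamma$ as the maximum of the three thresholds, Lemma \ref{lem-comp} yields $h<\bar h$ and $u<\bar u$ on $[\tau^*,\tau]$ for every $\tau$, which is the claim.
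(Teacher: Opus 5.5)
Your proposal is correct and follows the paper's route. You feed $(\bar u, g,\bar h)$ into Lemma \ref{lem-comp} starting at $\tau^*$, and check the boundary and free-boundary conditions via the monotonicity of $\bar\Phi$ and the bound $\frac{d}{\delta}\Upsilon_z(t,z_\delta)\le k^*(t)+\frac{2dM}{\delta}B_0q_0e^{-\sigma_0(t-\tau^*)}$. You then obtain the differential inequality in the three regions $\xi\le 0$, $0\le\xi\le K_0$, $\xi\ge K_0$ by taking $\gamma$ large, exactly as the paper does. One small point, shared with the paper: on $\xi\in[z_\delta,0]$ one has $\bar u\in[\delta,\delta+q_0e^{-\sigma_0(t-\tau^*)}]$ rather than $[0,\delta]$, so the extra $O(q_0e^{-\sigma_0(t-\tau^*)})$ coming from $f$ must also be absorbed by the $\gamma$-term, which your choice of large $\gamma$ already allows.
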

\begin{proof}  We  show that $(\bar{u}(t,x),g(t),\bar{h}(t))$ is an upper solution of \eqref{free-bound} for $t\geq \tau^*$, provided that $\gamma$ is chosen properly.  

  Clearly,  $\bar{h}(\tau^*)>h(\tau^*)$. From the choice of $\tau^*$, the monotonicity of $\bar{\Phi}$ and \eqref{eq-1q_0} we also have 
\begin{align*}
  \bar{u}(\tau^*,x)&\geq  \bar{\Phi}(\tau^*, \bar{h}(\tau^*)-h(\tau^*)+z_\delta(\tau^*))+q_0\geq  \bar{\Phi}(\tau^*, K_0)+q_0\\
 & =\Phi^*(\tau^*, K_0)+q_0\geq 1+\frac{q_0}{2}\geq u(\tau^*,x) \text{ for }x\in[g(\tau^*),h(\tau^*)].
\end{align*}
Moreover,  by direct calculations,
\begin{eqnarray}\label{eq-sum}
  &&\bar{u}(t,\bar{h}(t))=\delta, ~~\bar{u}(t,g(t))\geq \bar{u}(t,\bar{h}(t))=\delta=u(t,g(t)),\nonumber\\
  &&\bar{u}(t,h(t))\geq   \bar{u}(t,\bar{h}(t))=\delta=u(t,h(t)) ~~~\text{ if }h(t)\leq \bar{h}(t),\\
  &&-\frac{d}{\delta}\bar{u}_x(t,\bar{h}(t))=\frac{d}{\delta} \bar{\Phi}_z(t,z_\delta(t))=\frac{d}{\delta} \Upsilon_z(t,z_\delta(t))\nonumber.
\end{eqnarray}
Here, $\bar{\Phi}_z$ and $\Upsilon_z$  denote the partial derivatives of $\bar{\Phi}(t,z) $ and $\Upsilon(t,z)$ with respect to $z$, respectively.
By \eqref{eq-zdt}, we also have 
 \begin{equation*}
   \frac{d}{\delta} \Upsilon_z(t,z_\delta(t))\leq \frac{d}{\delta} \Upsilon_z(t,0)+B_2|z_\delta(t)|\leq k^*(t)+B_2B_0q_0 e^{-\sigma_0(t-\tau^*)},
 \end{equation*}
 where 
 \begin{equation*}
  B_2:=\frac{d}{\delta}\|\Upsilon_{zz}(t,z)\|_{L^{\infty}([0,T]\times[-a,0])}.
 \end{equation*}
 Hence, 
 \begin{eqnarray*}
  -\frac{d}{\delta}\bar{u}_x(t,\bar{h}(t))=\frac{d}{\delta}\Upsilon_z(t,z_\delta(t))&\leq &  k^*(t)+B_2B_0q_0 e^{-\sigma_0(t-\tau^*)}\\
&\leq & k^*(t)+\gamma\sigma_0e^{-\sigma_0(t-\tau^{*})}= \bar{h}'(t)
 \end{eqnarray*}
 provided that 
 \begin{equation}\label{eq-gq1}
  \gamma\geq \frac{q_0 B_2B_0 }{\sigma_0}.
 \end{equation}

Denote $\zeta=\zeta (t,x):=\bar{h}(t)-x+z_\delta(t)$. We are going to show that 
\begin{eqnarray*}
  \mathcal{N}[\bar{u}]:=\bar{u}_t-d\bar{u}_{xx}-f(t,\bar{u})\geq 0 \mbox{ for } t>\tau^*,\  x\in[g(t),\bar{h}(t)],
\end{eqnarray*}
according to the cases  $\zeta(t,x)\leq 0$ and $\zeta(t,x)> 0$, respectively.
By the definition of $\bar{u}$, for  $-a\leq \zeta\leq 0$,
\begin{equation*}
  \bar{u}(t,x)=\Upsilon(t,\zeta)+ q_0e^{-\sigma_0(t-\tau^{*})}=-M\zeta^2+\frac{\delta}{d}k^*(t)\zeta+\delta+ q_0e^{-\sigma_0(t-\tau^{*})}.
\end{equation*}
So
\begin{eqnarray*}
  &&\bar{u}_t=\Upsilon_t+\Upsilon_\zeta(\bar{h}'(t)+z_\delta'(t))-\sigma_0q_0e^{-\sigma_0(t-\tau^*)},\\
  &&\bar{u}_x=- \Upsilon_\zeta=2M\zeta-\frac{\delta}{d}k^*(t),~~\bar{u}_{xx}= \Upsilon_{\zeta\zeta}=-2M,
\end{eqnarray*}
and by \eqref{eq-upes}, \eqref{eq-zd'} and the definition of $M$, we have, when $-a\leq\zeta\leq 0$,  
{\small \begin{eqnarray*}
  \mathcal{N}[\bar{u}]&=&\Upsilon_t+\Upsilon_\zeta(\bar{h}'(t)+z_\delta'(t))-\sigma_0q_0e^{-\sigma_0(t-\tau^*)}+2Md-f(t,\bar{u})\\
  &\geq & -\frac{\delta a}{d}\|k^{*}\|_{C^{1}([0,T])}+\frac{\delta  k_{min}}{d}\big[k^*(t)+\gamma\sigma_0e^{-\sigma_0(t-\tau^{*})}\big]-\left(2M a+\frac{\delta}{d}k_{max}\right) B_1q_0(1+\sigma_0)e^{-\sigma_0(t-\tau^*)}\\
  &&-\sigma_0q_0e^{-\sigma_0(t-\tau^*)}+2Md-f(t,\bar{u})\\
  &\geq &  -\frac{\delta a}{d}\|k^{*}\|_{C^{1}([0, T])}+\left[\frac{\delta  k_{min}}{d}\gamma\sigma_0-\left(2M a+\frac{\delta}{d}k_{max}\right) B_1q_0(1+\sigma_0)-\sigma_0q_0\right] e^{-\sigma_0(t-\tau^*)}\\
  && +2Md-f(t,\bar{u})\\
  &\geq& 0 \ \ \mbox{ provided that }
  \end{eqnarray*}}
\begin{equation}\label{eq-gq2}
  \gamma\geq \frac{dq_0}{\delta k_{min}\sigma_0}\left[\Big(2M a+\frac{\delta}{d} k_{max}\Big) B_1(1+\sigma_0)+\sigma_0\right].
\end{equation}

For $\zeta>0$, we have
\begin{eqnarray*}
&& \bar{u}(t,x)=\Phi^*(t, \zeta)+q_0e^{-\sigma_0(t-\tau^{*})},\\
 &&\bar{u}_t=\Phi_t^*+\Phi_\zeta^*(\bar{h}'(t)+z_\delta'(t))-\sigma_0q_0e^{-\sigma_0(t-\tau^{*})},\\
 &&\bar{u}_x=- \Phi_\zeta^*,  ~~ \bar{u}_{xx}=\Phi_{\zeta\zeta}^*,
\end{eqnarray*}
and  hence,
\begin{equation}\label{Nu^2}
\begin{aligned}
 \mathcal{N}[\bar{u}]&= \Phi_t^*+\Phi_\zeta^*[k^*(t)+\gamma\sigma_0e^{-\sigma_0(t-\tau^{*})}+z_\delta'(t)]-\sigma_0q_0e^{-\sigma_0(t-\tau^{*})}-d \Phi_{\zeta\zeta}^*-f(t,\bar{u})\\
&=[\gamma\sigma_0e^{-\sigma_0(t-\tau^{*})}+z_\delta'(t)]\Phi_\zeta^*-\sigma_0q_0e^{-\sigma_0(t-\tau^{*})}+f(t,\Phi^*)-f(t,\bar{u}).
\end{aligned}
\end{equation}
By the mean value theorem, we can express \(f(t, \bar{u})\) as  
\begin{equation*}
  f(t,\bar{u})=f(t,\Phi^*)+f_u(t,\Phi^*+\theta q_0e^{-\sigma_0(t-\tau^{*})})q_0e^{-\sigma_0(t-\tau^{*})}
\end{equation*}
for some $\theta=\theta(t,x)\in(0,1)$. 
Since  $\Phi^*_\zeta(t,\zeta)>0$ for $\zeta\geq 0$, there exist positive constants  $Q_0$   and $Q_1$ such that
 \begin{equation*}
  0<Q_0\leq \Phi^*_\zeta(t,\zeta)\leq Q_1 ~~\text{ for }t\in\mathbb{R},\ \zeta\in[0,K_0].
 \end{equation*}
 Hence we can use \eqref{eq-zd'} to obtain, for $\zeta\in[0,K_0]$,
\begin{eqnarray*}
  \mathcal{N}[\bar{u}]
  &\geq& \gamma\sigma_0Q_0e^{-\sigma_0(t-\tau^{*})}-B_1q_0Q_1(1+\sigma_0)e^{-\sigma_0(t-\tau^*)}-\sigma_0q_0e^{-\sigma_0(t-\tau^{*})}\\
  &&-f_u(t,\Phi^*+\theta q_0e^{-\sigma_0(t-\tau^{*})})q_0e^{-\sigma_0(t-\tau^{*})}\\
&\geq& \left[\gamma\sigma_0Q_0-B_1q_0Q_1(1+\sigma_0)-\sigma_0q_0-q_0\max_{t\in[0,T],u\in[\delta,1+q_0]}f_u(t,u)\right]e^{-\sigma_0(t-\tau^{*})}\\
&\geq& 0\ \ \mbox{ provided that }
\end{eqnarray*}
\begin{equation}\label{eq-gq3}
  \gamma\geq \frac{q_0\left[B_1Q_1(1+\sigma_0)+\sigma_0+\max_{t\in[0,T],u\in[\delta,1+q_0]}\partial_uf(t,u)\right]}{\sigma_0Q_0}.
\end{equation}

It remains to prove  $\mathcal{N}[\bar{u}]\geq 0$  for   $\zeta\geq K_0$.
Thanks to \eqref{eq-1q_0} and the choice of $q_0$, it is clear that  
\begin{equation*}
 1-\eta\leq  \Phi^*(t,\zeta)+\theta q_0e^{-\sigma_0(t-\tau^{*})}\leq 1+q_0\leq 1+\eta ~~\text{ for }\zeta\geq K_0.
\end{equation*}
Thus, for $\zeta\geq K_0$, making use of  \eqref{eq-f_usig}, we deduce from  \eqref{Nu^2} that 
\begin{eqnarray*}
  \mathcal{N}[\bar{u}]&\geq  &(\gamma\sigma_0e^{-\sigma_0(t-\tau^{*})}-B_1q_0(1+\sigma_0)e^{-\sigma_0(t-\tau^*)})\Phi_\zeta^*-\sigma_0q_0e^{-\sigma_0(t-\tau^{*})}\nonumber\\
&&-f_u(t,\Phi^*+\theta q_0e^{-\sigma_0(t-\tau^{*})})q_0e^{-\sigma_0(t-\tau^{*})}\\
&\geq &(\gamma\sigma_0-B_1q_0(1+\sigma_0))\Phi_\zeta^* e^{-\sigma_0(t-\tau^{*})} \nonumber\\
&&+\left(\min_{t\in[0,T],u\in[1-\eta,1+\eta]}[-f_u(t,u)]-\sigma_0\right)q_0e^{-\sigma_0(t-\tau^{*})}\\
&\geq& 0\ \ \mbox{ provided that }
\end{eqnarray*}
\begin{equation}\label{eq-gq4}
  \gamma\geq \frac{B_1q_0(1+\sigma_0)}{\sigma_0}.
\end{equation}
Therefore, upon choosing $\gamma$ sufficiently large so that   \eqref{eq-gq1}, \eqref{eq-gq2}, \eqref{eq-gq3} and \eqref{eq-gq4} are fulfilled,  we may apply Lemma \ref{lem-comp} to conclude that 
\begin{equation*}
  h(t)\leq \bar{h}(t),\ u(t,x)\leq \bar{u}(t,x) ~~\text{ for }t\geq \tau^*,~x\in[g(t),h(t)].
\end{equation*}
This completes the proof.
\end{proof}

 \subsection{Lower bound}
 Fix $q_1\in(0,1-\delta)$ and define
 \begin{equation}\label{eq-low}
  \begin{cases}
  \displaystyle\underline{h}(t):=\int_0^{t}k^*(s)ds+\beta e^{-\sigma_1t}+K_1,  &t\geq0,\\
  \underline{u}(t,x):=\Phi^*(t,\underline{h}(t)-x+x_\delta(t))+\Phi^*(t,\underline{h}(t)+x+x_\delta(t))-1-q_1e^{-\sigma_1t}, &x\in[-\underline{h}(t),\underline{h}(t)],
 \end{cases}
 \end{equation}
where  $\beta$, $\sigma_1$, $K_1$ are  positive constants to be determined later and $x_\delta(t)>0$ is determined by  
\begin{equation}\label{delta}
\Phi^*(t,x_\delta(t))+\Phi^*(t,2\underline{h}(t)+x_\delta(t))-1-q_1e^{-\sigma_1t}=\delta.
 \end{equation}
By the choice of $q_1$ and the fact that $\Phi^*(\cdot,\infty)=1$, we  know that   $x_\delta(t)$ is well-defined, bounded and  satisfies $x_\delta(\infty)=0$. Thus, there exists a positive constant $N_2$ such that 
\begin{equation}\label{phiz-bound}
  \max\limits_{t\in[0,\infty),x\in[0,x_\delta(t)]}\Big(|\Phi_{xx}^*(t,x)|+|\Phi^*_{tx}(t,x)|+|\Phi^*_{t}(t,x)|\Big)\leq N_2.
\end{equation}
Moreover, since $\Phi^*_x>0$, there exist positive constants $N_3$ and $N_4$  such that 
\begin{equation}\label{phiz-bound+}
  0<N_3\leq \Phi_{x}^*(t,x)\leq N_4 ~\text{ for }t\in[0,\infty),\ x\in[0,x_\delta(t)].
\end{equation}
By Lemma \ref{lem-1-phi}, for any fixed $\mu\in(0,\mu_0)$,  that there exists $N_1$ such that    \eqref{phi-infty} holds.
Since
\[2\underline{h}(t)+x_\delta(t)\geq  k_{min}\,t ~\text{ for }t\geq 0\]
with $k_{min}$ given by \eqref{eq-k+-}, we obtain from \eqref{phi-infty} and  \eqref{delta} that 
\begin{eqnarray*}
  \Phi^*(t,x_\delta(t))&=&\delta+1+q_1e^{-\sigma_1t}-\Phi^*(t,2\underline{h}(t)+x_\delta(t))\\
  &\leq& \delta+q_1e^{-\sigma_1t}+N_1 e^{-\mu  k_{min}\,t}\ \mbox{ for }t\geq0.
\end{eqnarray*}
The  mean value theorem then yields
\begin{equation*}
  \Phi_x^*(t,\theta(t))x_\delta(t)=\Phi^*(t,x_\delta(t))-\Phi^*(t,0)\leq  q_1e^{-\sigma_1t}+N_1 e^{-\mu  k_{min}\,t}
\end{equation*}
for some  $\theta(t)\in[0,x_\delta(t)]$. Combining this  with  \eqref{phiz-bound+} we deduce 
\begin{equation}\label{eq-xdt}
  x_\delta(t)\leq C_1q_1e^{-\sigma_1t}+C_1 e^{-\mu  k_{min}\,t} ~\text{ for } t\geq 0, 
\end{equation}
where 
\begin{equation*}
  C_1:=\max\left\{\frac{1 }{N_3},\frac{N_1}{N_3}\right\}.
\end{equation*}
Due to $\Phi^*(t,0)\equiv\delta$ we have  $\Phi_t^*(t,0)=0$, and thus, by the  mean value theorem and  \eqref{phiz-bound},  we deduce 
\begin{eqnarray*}
  |\Phi_t^*(t,x_\delta(t))|=  |\Phi_t^*(t,x_\delta(t))-\Phi_t^*(t,0)|\leq |\Phi^*_{tx}(t,\theta(t))|x_\delta(t)\leq N_2x_\delta(t),
\end{eqnarray*}
where  $\theta(t)\in[0,x_\delta(t)]$. Differentiating equation \eqref{delta}  and employing inequalities \eqref{phi-infty} and \eqref{phiz-bound}, we obtain
\begin{eqnarray*}
  ~~&&[\Phi^*_x(t,x_\delta(t))+\Phi^*_x(t,2\underline{h}(t)+x_\delta(t))]x_\delta'(t)\\
  &=&-\Phi^*_t(t,x_\delta(t))-\Phi^*_t(t,2\underline{h}(t)+x_\delta(t))-2(k^*(t)-\sigma_1\beta e^{-\sigma_1t})\Phi^*_x(t,2\underline{h}(t)+x_\delta(t))-\sigma_1q_1e^{-\sigma_1t}\\
  &\leq&N_2x_\delta(t)+N_1(1+2\sigma_1\beta e^{-\sigma_1t})e^{-\mu  k_{min}\,t}-\sigma_1q_1e^{-\sigma_1t}\\
  &\leq & (N_2C_1-\sigma_1)q_1e^{-\sigma_1t}+(N_2C_1+N_1+2N_1\sigma_1\beta e^{-\sigma_1t})e^{-\mu  k_{min}\,t}\\
  &\leq &\tilde{C_1}q_1e^{-\sigma_1t}+\tilde{C_1}(1+\sigma_1\beta e^{-\sigma_1t})  e^{-\mu  k_{min}t},
\end{eqnarray*}
where 
\begin{equation*}
  \tilde{C_1}:=N_2C_1+2N_1-\sigma_1.
\end{equation*}
This and \eqref{phiz-bound+} imply 
\begin{equation}\label{eq-xdelta'}
  x_\delta'(t)\leq C_2q_1e^{-\sigma_1t}+C_2(1+\sigma_1\beta  e^{-\sigma_1t})  e^{-\mu  k_{min}t} ~\text{ for }t\geq 0,
\end{equation}
with  $C_2:=\frac{\tilde{C_1}}{N_3}>0$.
\begin{lemma}\label{lemma-lower}
  There exist positive constants $\beta$, $\sigma_1$, $K_1$ in $\mathbb R$ and  $n_0, n_2$ in $\mathbb{N}$ such that   $\underline{u}$ and $\underline{h}$  given by \eqref{eq-low} satisfy
  \begin{equation*}
    \begin{cases}
[-\underline{h}(t),\underline{h}(t)]\subset (g(t+n_2T),h(t+n_2T)) &\text{ for }t\geq  n_0T,\\
   \underline{u}(t,x)\leq u(t+n_2T,x) &\text{ for }t\geq n_0T, ~x\in[-\underline{h}(t), \underline{h}(t)].
    \end{cases}
  \end{equation*}
  \end{lemma}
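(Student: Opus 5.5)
Our plan is to check that $(\underline u,-\underline h,\underline h)$ is a lower solution of \eqref{free-bound} on $[n_0T,\infty)$, comparing with the time-shifted solution $(u,g,h)(t+n_2T,\cdot)$. Since $f$ is $T$-periodic, this shifted triple again solves \eqref{free-bound} for $t\ge0$. The comparison is done through the lower-solution version of Lemma \ref{lem-comp2}, i.e. with every inequality reversed.

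\textbf{Step 1: parameters.} Pick $\sigma_1\in(0,\min\{\sigma_0,\mu k_{min}\})$, with $\sigma_0$ from \eqref{eq-f_usig}. Fix $K_1>0$; its only job is to make $\underline h(t)$ large. Then choose $\beta$ large, so that $\sigma_1\beta e^{-\sigma_1 t}$ dominates every error term in the free-boundary inequality and in the equation. Finally take $n_0$ large, so that for $t\ge n_0T$ all exponentially small quantities are below the thresholds where $\Phi^*$ is within $\eta$ of $1$. These quantities are $x_\delta(t)$ via \eqref{eq-xdt}, $x_\delta'(t)$ via \eqref{eq-xdelta'}, and $1-\Phi^*(t,2\underline h+x_\delta)$ via \eqref{phi-infty}.

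\textbf{Step 2: boundary and free-boundary conditions.} By \eqref{delta}, $\underline u(t,\pm\underline h(t))=\delta$. At $x=\underline h(t)$,
\[-\tfrac d\delta \underline u_x=\tfrac d\delta\big[\Phi^*_x(t,x_\delta)-\Phi^*_x(t,2\underline h+x_\delta)\big].\]
By \eqref{phiz-bound}, \eqref{eq-phix1} and \eqref{phi-infty}, this is at least $k^*(t)-C x_\delta(t)-Ce^{-\mu k_{min}t}$. We need it to be $\ge \underline h'(t)=k^*(t)-\sigma_1\beta e^{-\sigma_1 t}$, which holds for $\beta$ large by \eqref{eq-xdt}, since $\sigma_1<\mu k_{min}$. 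The left end follows by symmetry.

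\textbf{Step 3: initial ordering.} By Proposition \ref{prop-summ}(ii),(iii), $u\to1$ locally uniformly and $-g,h\to\infty$. Hence we can pick $n_2$ so that $[-\underline h(n_0T),\underline h(n_0T)]\subset(g(n_0T+n_2T),h(n_0T+n_2T))$. Since $\underline u\le 1-q_1e^{-\sigma_1 n_0T}<1$, we can also arrange $u(n_0T+n_2T,x)>\underline u(n_0T,x)$ on that interval. Note $\underline h(n_0T)$ is fixed once $\beta,K_1,n_0$ are fixed, and $n_2$ is chosen last.

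\textbf{Step 4: differential inequality.} This is the main obstacle. Write $\zeta_\pm=\underline h\mp x+x_\delta$, $\Phi_\pm=\Phi^*(t,\zeta_\pm)$ and $\epsilon(t)=q_1e^{-\sigma_1t}$. Using the semi-wave equation \eqref{eqn-wave},
\[\underline u_t-d\underline u_{xx}-f(t,\underline u)=(x_\delta'-\sigma_1\beta e^{-\sigma_1t})(\Phi^*_{x,+}+\Phi^*_{x,-})+f(t,\Phi_+)+f(t,\Phi_-)-f(t,\underline u)+\sigma_1\epsilon.\]
We must show this is $\le0$, and we split $x$ into three regions.

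First, near the right end, where $\zeta_-$ is large and $\zeta_+\le L$. Here $1-\Phi_-$ is small and $\Phi^*_{x,+}\ge c>0$. The term $-\sigma_1\beta e^{-\sigma_1t}c$ beats $|f(t,\Phi_+)-f(t,\underline u)|\le C(1-\Phi_-+\epsilon)$, using $|f(t,\Phi_-)|\le C(1-\Phi_-)$ and $1-\Phi_-\le N_1e^{-\mu k_{min}t}$. This needs $\beta$ large relative to $q_1$ and to the constants $N_1$, $C_2$.

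Second, near the left end, which is symmetric.

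Third, the middle, where both $\zeta_\pm\ge L$. Here $\Phi_\pm,\underline u\in[1-\eta,1]$ and \eqref{eq-f_usig} applies. Writing $f(t,\Phi_\pm)=f(t,\Phi_\pm)-f(t,1)$ gives
\[f(\Phi_+)+f(\Phi_-)-f(\underline u)\le -\sigma_0\epsilon+C(1-\Phi_+)(1-\Phi_-).\]
The product term is absorbed because $(1-\Phi_+)(1-\Phi_-)\le N_1^2e^{-\mu(\zeta_++\zeta_-)}\le N_1^2e^{-2\mu\underline h}$, which is negligible compared to $\epsilon$ when $\sigma_1<2\mu k_{min}$ and $K_1$ or $n_0$ is large. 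The remaining term $\sigma_1\epsilon-\sigma_0\epsilon$ is negative, and the $x_\delta'$ contribution is controlled by \eqref{eq-xdelta'} together with the $\beta$ term.

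With Steps 2–4 in place, the lower-solution analogue of Lemma \ref{lem-comp2} yields both assertions of the lemma.
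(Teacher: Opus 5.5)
Your construction and parameter bookkeeping follow the paper's proof closely: the same subsolution, the free-boundary inequality via \eqref{eq-xdt} with $\sigma_1<\mu k_{min}$, a three-region split, and $n_2$ chosen last. In the middle region you use the $\beta$ term to make $x_\delta'-\sigma_1\beta e^{-\sigma_1t}\le 0$, where the paper uses smallness of $\Phi^*_z$; that is fine.

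Two steps need repair, however. The first is the middle-region estimate. You claim $f(t,\Phi_+)+f(t,\Phi_-)-f(t,\underline u)\le-\sigma_0\epsilon+C(1-\Phi_+)(1-\Phi_-)$. With $a=1-\Phi_+$ and $b=1-\Phi_-$, the cross term is
$f(t,1-a)+f(t,1-b)-f(t,1-a-b)=\int_0^a[f_u(t,1-s-b)-f_u(t,1-s)]\,ds$.
An $O(ab)$ bound on it needs $f_u$ to be Lipschitz in $u$, whereas \eqref{f-smooth} only gives $f\in C^1$ in $u$. For instance, $f(t,u)=\sigma_0(1-u)-(1-u)^{3/2}$ near $u=1$ makes the cross term of order $\min\{a,b\}\max\{a,b\}^{1/2}$, which is $\gg ab$.

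The repair is what the paper does. Split the cross term as $[f(t,\Phi_+)-f(t,1)]+[f(t,\Phi_-)-f(t,\Phi_--a)]$, or symmetrically, to bound it by $2L\min\{a,b\}$. Then note that $\zeta_++\zeta_-\ge 2\underline h$ forces $\max\{\zeta_+,\zeta_-\}\ge\underline h\ge k_{min}t$, hence $\min\{a,b\}\le N_1e^{-\mu k_{min}t}$. This is negligible against $(\sigma_0-\sigma_1)q_1e^{-\sigma_1t}$ precisely because you chose $\sigma_1<\mu k_{min}$.

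The second point is the final comparison. Besides $\underline u(t,\pm\underline h(t))=\delta$ and the initial ordering, the lower-solution version of Lemma~\ref{lem-comp2} requires $u(t+n_2T,\pm\underline h(t))\ge\delta$ whenever $[-\underline h(t),\underline h(t)]\subset[g(t+n_2T),h(t+n_2T)]$. This is the counterpart of the fourth and fifth conditions there, now imposed at the inner boundary. It is exactly where the $\delta$-level boundary condition breaks the usual order preservation, so it cannot be skipped, and you never verify it.

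It follows from Proposition~\ref{prop-summ}(ii), which gives $u\ge\delta$ for $t\ge T_0$, provided $n_2$ is also chosen with $(n_0+n_2)T\ge T_0$. In Step 3 you do cite (ii), but you only use $u\to 1$ and $-g,h\to\infty$, which come from (iii). With these two fixes your argument coincides with the paper's.
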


  \begin{proof} It suffices to show that $(\underline{u},-\underline{h},\underline{h})$ is  a lower solution of \eqref{free-bound}. For technical convenience, we extend $f$ to   $(-2,0]$ so  that $f\in C^1$ in $u$ for $u\geq -2$, uniformly with respect to $t\in \mathbb R$. Note that this extension does not affect the asymptotic behavior of $u$. By \eqref{phiz-bound} and \eqref{eq-xdt}, we obtain 
\begin{eqnarray*}
  \frac{d}{\delta}\Phi^*_x(t,x_\delta(t))=   \frac{d}{\delta}\Phi^*_x(t,0)+  \frac{d}{\delta}\Phi^*_{xx}(t,\theta(t))x_\delta(t)\geq k^*(t)-\frac{d}{\delta}N_2C_1 (q_1e^{-\sigma_1t}+ e^{-\mu  k_{min}\,t}).
\end{eqnarray*}
Hence,
\begin{eqnarray*}
  -\frac{d}{\delta}\underline{u}_x(t,\underline{h}(t))&=&-\frac{d}{\delta}\left[\Phi^*_x(t,2\underline{h}(t)+x_\delta(t))-\Phi^*_x(t,x_\delta(t))\right]\\
  &\geq& k^*(t)-\frac{d}{\delta}N_2C_1 (q_1e^{-\sigma_1t}+ e^{-\mu  k_{min}t})-\frac{d}{\delta}N_1 e^{-\mu  k_{min}t}\\
  &\geq &k^*(t)- \frac{d}{\delta}N_2C_1q_1 e^{-\sigma_1t}-\frac{d}{\delta}(N_2C_1+N_1) e^{-\mu  k_{min}t}.
\end{eqnarray*}
It follows that 
\begin{eqnarray*}
  \underline{h}'(t)=k^*(t)-\sigma_1\beta e^{-\sigma_1t}\leq   -\frac{d}{\delta}\underline{u}_x(t,\underline{h}(t)) ~\text{ for }t\geq 0
\end{eqnarray*}
provided $\sigma_1<\mu  k_{min}$ and 
\begin{equation*}
  \beta\geq \frac{(2N_2C_1+N_1)d}{\sigma_1\delta}.
\end{equation*}

 Since  $\Phi^*(t,\infty)=1$ and $\Phi^*_x(t,\infty)=0$ uniformly in $t\in\mathbb{R}$, for any given  small positive $\epsilon_0$,  there exists  $\tilde{K}_0=\tilde{K}_0(\epsilon_0)>0$ large enough such that 
\begin{equation}\label{phi-epsi}
  \Phi^*(t,x)> 1-\epsilon_0 \text{ and }\Phi_x(t,x)<\epsilon_0/2~\text{ for }t\in\mathbb{R},\ x\geq  \tilde{K}_0.
\end{equation}
Therefore, if $K_1$ in \eqref{eq-low} is chosen to satify $K_1\geq \tilde{K}_0$, then   $\underline{h}(t)\geq \tilde{K}_0$ for $t\geq 0$.

Next, we shall prove
\begin{equation*}
  \mathcal{N}[\underline{u}]=\underline{u}_t-d\underline{u}_{xx}-f(t,\underline{u})\leq 0.
\end{equation*}
For convenience, denote 
\[
\xi^-=\xi^-(t,x):=\underline{h}(t)-x+x_\delta(t),\ \ \xi^+=\xi^+(t,x):=\underline{h}(t)+x+x_\delta(t).
\]
 By direct calculation we have 
\begin{eqnarray*}
  \underline{u}_t&=& \Phi_t^*(t,\xi^-)+\Phi^*_t(t,\xi^+)+(\Phi^*_z(t,\xi^-)+\Phi^*_z(t,\xi^+))(\underline{h}'(t)+x_\delta'(t))+\sigma_1q_1 e^{-\sigma_1t},\\
  \underline{u}_{xx}&=& \Phi^*_{zz}(t,\xi^-)+\Phi^*_{zz}(t,\xi^+),
\end{eqnarray*}
where $\Phi^*_z$ denotes the partial derivative of $\Phi^*(t,z) $ with respect to $z$.
Since $\Phi^*$ satisfies \eqref{eqn-wave}, we obtain
\begin{equation}\label{eq-Nu_}
\begin{aligned}
  \mathcal{N}[\underline{u}]&= (\Phi^*_z(t,\xi^-)+\Phi^*_z(t,\xi^+))(-\sigma_1\beta e^{-\sigma_1t}+x_\delta'(t))+\sigma_1q_1 e^{-\sigma_1t}\\
  &\ \ \ \ +f(t,\Phi^*(t,\xi^-))+f(t,\Phi^*(t,\xi^+))-f\left(t,\bar{u}\right).
  \end{aligned}
\end{equation}
We next show  $\mathcal{N}[\underline{u}]\leq 0$ in the following regions one by one.
\begin{itemize}
\item[(a):] $ \underline{h}(t)-\tilde{K}_0\leq x\leq \underline{h}(t)$, 
\item[(b):] $  -\underline{h}(t)\leq x\leq -\underline{h}(t)+\tilde{K}_0$, 
\item[(c):] $  -\underline{h}(t)+\tilde{K}_0\leq x\leq \underline{h}(t)-\tilde{K}_0$.
\end{itemize}
\medskip

(a) For $\underline{h}(t)-\tilde{K}_0\leq x\leq \underline{h}(t)$, we have $x_\delta(t)\leq \xi^-\leq x_\delta(t)+\tilde{K}_0$  and
\begin{equation*}
  \xi^+=\underline{h}(t)+x+x_\delta(t)\geq 2\underline{h}(t)-\tilde{K}_0+x_\delta(t)\geq \underline{h}(t)\geq k_{min}\, t
\end{equation*}
with $x_\delta(t)$ being bounded and $t\geq n_0T\gg 1$.
Thus in this case there exist positive constants $\tilde{N}_3$ and $\tilde{N}_4$ such that 
\begin{equation*}
 0< \tilde{N}_3\leq\Phi^*_z(t,\xi^+)+\Phi^*_z(t,\xi^-)\leq \tilde{N}_4.
\end{equation*}
Since $f\in C^1$ in $u$ uniformly in $t$, and $f(\cdot,1)=0$, 
we can find $L>0$ such that
\begin{equation*}
  f(t,\Phi^*(t,\xi^+))= f(t,\Phi^*(t,\xi^+))-f(t,1)\leq L|\Phi^*(t,\xi^+)-1|\leq LN_1e^{-\mu\xi^+} \leq LN_1 e^{-\mu  k_{min}t}
\end{equation*}
and
\begin{eqnarray*}
  f(t,\Phi^*(t,\xi^-))-f(t,\underline{u})\leq L|1-\Phi^*(t,\xi^+)+q_1e^{-\sigma_1t}|\leq L(N_1 e^{-\mu  k_{min}t}+q_1e^{-\sigma_1t}).
\end{eqnarray*}
 It then follows from \eqref{eq-Nu_} and \eqref{eq-xdelta'} that 
\begin{eqnarray*}
  \mathcal{N}[\underline{u}]&\leq&(\tilde{N}_4C_2q_1-\tilde{N}_3\sigma_1\beta) e^{-\sigma_1t}+\tilde{N}_4C_2 (1+\sigma_1\beta  e^{-\sigma_1t})e^{-\mu  k_{min}t}+\sigma_1q_1 e^{-\sigma_1t}\\
  && +L(2N_1 e^{-\mu  k_{min}t}+q_1e^{-\sigma_1t})\\
  &=&\left(\tilde{N}_4C_2q_1-\tilde{N}_3\sigma_1\beta+\sigma_1q_1  +Lq_1\right) e^{-\sigma_1t}+[\tilde{N}_4C_2(1+\sigma_1\beta  e^{-\sigma_1t} )+2LN_1] e^{-\mu  k_{min}t}\\
  &=& \left(\tilde{N}_4C_2q_1-\tilde{N}_3\sigma_1\beta/2+\sigma_1q_1  +Lq_1\right) e^{-\sigma_1t}+(\tilde{N}_4C_2+2LN_1) e^{-\mu  k_{min}t}\\
  &&+\sigma_1\beta(\tilde{N}_4C_2 e^{-\mu  k_{min}t}-\tilde{N}_3 /2)e^{-\sigma_1t}\leq 0 ~~\text{ for }t\geq  n_0T\gg1,
\end{eqnarray*}
provided that $\sigma_1<\mu k_{min}$ and
\begin{equation*}
  \beta\geq \frac{2(\tilde{N}_4C_2q_1+\sigma_1q_1+Lq_1+\tilde{N}_4C_2+2LN_1)}{\tilde{N}_3\sigma_1}.
\end{equation*} 
Here we also used $ n_0\gg 1$ and so 
\begin{equation*}
  \tilde{N}_3\geq 2\tilde{N}_4C_2 e^{-\mu k_{min} n_0T}.
\end{equation*}

(b) Similarly, we can show  $\mathcal{N}[\underline{u}]\leq 0$ for $-\underline{h}(t)\leq x\leq -\underline{h}(t)-\tilde{K}_0$.
 \medskip
 
(c) In  the remaining case   $ -\underline{h}(t)+\tilde{K}_0\leq x\leq \underline{h}(t)-\tilde{K}_0$, it is easily seen that  $\xi^+,\xi^-\geq \tilde{K}_0$ and $\max\{\xi^+,\xi^-\}\geq \underline{h}(t)\geq  k_{min}t$. This together with \eqref{phi-epsi} implies that, for $t\geq n_0T\gg1$,  
\begin{equation}\label{eq-mami}
   \begin{cases} \min\{1-\Phi^*(t,\xi^+), 1-\Phi^*(t,\xi^-)\}\leq N_1e^{-\mu k_{min}t},\ \\ 
   \max\{1-\Phi^*(t,\xi^+), 1-\Phi^*(t,\xi^-)\}\leq \epsilon_0,\ \\
   \Phi^*_z(t,\xi^+)+\Phi^*_z(t,\xi^-)<\epsilon_0.
   \end{cases}
\end{equation}

For any $t\geq 0$ and $u,v\in[0,1]$, we denote
\begin{equation*}
  F(u,v)(t):=f(t,u)+f(t,v)-f\left(t, u+v-1-q_1e^{-\sigma_1 t}\right).
\end{equation*}
Since $u, v\in [0,1]$ and $f(t,\cdot)$ is $C^1$  uniformly in $t$ with $f(t,1)\equiv 1$, there exists some $L>0$ such that
\begin{eqnarray*}
  F(u,v)(t)&\leq& \min\left\{L(1-u)+f_u(t,\theta_v)(1+ q_1e^{-\sigma_1t}-u), L(1-v)+f_u(t,\theta_u)(1+ q_1e^{-\sigma_1t}-v)\right\}\\
  &\leq& \min\left\{2L(1-u)+f_u(t,\theta_v) q_1e^{-\sigma_1t}, 2L(1-v)+f_u(t,\theta_u) q_1e^{-\sigma_1t}\right\},
\end{eqnarray*}
where $\theta_v\in [u+v-1-q_1e^{-\sigma_1 t}, v]$, $\theta_u\in [u+v-1-q_1e^{-\sigma_1 t}, u]$.
We may now use \eqref{eq-mami} to deduce that
\begin{equation*}
  F(\Phi^*(t,\xi^+),\Phi^*(t,\xi^-))\leq 2LN_1 e^{-\mu  k_{min}t}+\sigma_{\epsilon_0}q_1e^{-\sigma_1t} \mbox{ for } t\geq n_0T,
\end{equation*}
where
\[
\sigma_{\epsilon_0}:= \max_{t\in [0,T], u\in [1-3\epsilon_0, 1]}f_u(t,u)\leq \frac{1}{2}\max_{t\in [0,T]}f_u(t,1)<0 
\]
 provided that $\epsilon_0$ is small enough and $n_0\gg 1$.
The above inequality,  \eqref{eq-xdelta'}, \eqref{eq-Nu_}  and \eqref{eq-mami} then lead to,  for $t\geq  n_0T\gg1$,
 \begin{eqnarray*}
\mathcal{N}[\underline{u}]&\leq & (\Phi^*_z(t,\xi^+)+\Phi^*_z(t,\xi^-))x_\delta'(t)+\sigma_1q_1 e^{-\sigma_1t}+F(\Phi^*(t,\xi^+),\Phi^*(t,\xi^-))\\
&\leq &C_2q_1\epsilon_0e^{-\sigma_1t}+C_2\epsilon_0(1+\sigma_1\beta  e^{-\sigma_1t})e^{-\mu  k_{min}t}+(\sigma_1+\sigma_{\epsilon_0})q_1 e^{-\sigma_1t}+2LN_1 e^{-\mu  k_{min}t}\\
&\leq&q_1e^{-\sigma_1t}\left\{C_2\epsilon_0+\sigma_1+\frac{1}{2}\max_{t\in [0,T]}f_u(t,1)
+e^{-(\mu  k_{min}-\sigma_1)t}\left[C_2\epsilon_0(1+\sigma_1\beta e^{-\sigma_1t})+2LN_1\right]\right\}\\
&\leq& 0 \ \  \mbox{ provided that $\epsilon_0$ is small enough and}
\end{eqnarray*}
  \[
  \displaystyle\sigma_1<\min\Big\{-\frac12 \max_{t\in[0,T]}f_u(t,1),\mu k_{min}\Big\}.
  \]

 We have now proved that $\mathcal{N}[\underline{u}]\leq 0$ for $t\geq n_0T\gg1$ and $x\in [-\underline h(t), \underline h(t)]$, with suitable choices of the parameters.
\smallskip

By Proposition \ref{prop-summ}, we can find positive integers $n_1\geq  n_0$ such that 
\begin{equation*}\begin{cases}
-g(n_1T), h(n_1T)>\underline{h}( n_0T),\\
  u(t,x)\geq \delta ~\text{ for }t\geq n_1T,\  x\in[g(t),h(t)],\\
  u(n_1T,x)\geq 1-q_1e^{-\sigma_1 n_0T}\geq \underline{u}( n_0T,x) ~\text{ for }x\in[-\underline{h}( n_0T),\underline{h}( n_0T)].
  \end{cases}
\end{equation*} 
Denote $n_2=n_1-n_0$. Then
\begin{equation*}
u( n_0T+n_2T,x)\geq \underline{u}( n_0T,x) ~\text{ for }x\in[-\underline{h}( n_0T),\underline{h}( n_0T)],
\end{equation*} 
and for $t\geq  n_0T$, 
 \begin{equation*}
  u(t+n_2T,\pm \underline{h}(t))\geq \delta=\underline{u}(t,\pm \underline{h}(t)) \text{ whenever } [-\underline{h}(t),\underline{h}(t)]\subset [g(t+n_2T),h(t+n_2T)].
 \end{equation*}  
So we are now able to apply the lower solution version of Lemma  \ref{lem-comp2} to conclude that 
\begin{eqnarray*}\begin{cases}
  [-\underline{h}(t),\underline{h}(t)]\subset (g(t+n_2T),h(t+n_2T)),\\ \underline{u}(t,x)\leq u(t+n_2T,x),
  \end{cases}  \text{ for }t\geq  n_0T,\ x\in [-\underline{h}(t),\underline{h}(t)].
\end{eqnarray*}
The proof is thereby complete. 
\end{proof}

\begin{proposition}\label{prop-bound}
  There exists $Q>0$ such that 
  \begin{equation}
     \Big|g(t)+\int_0^tk^*(s)ds\Big|+ \Big|h(t)-\int_0^tk^*(s)ds\Big|\leq Q ~~\text{ for } t\geq 0.
  \end{equation}
\end{proposition}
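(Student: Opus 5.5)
We deduce the bound by combining Lemma~\ref{lemma-upp} and Lemma~\ref{lemma-lower}, together with the symmetric versions for the left boundary $g(t)$.

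\emph{Upper bound for $h$.} Lemma~\ref{lemma-upp} gives, for $t\ge\tau^*$,
\[
h(t)\le \bar h(t)=\int_{\tau^*}^t k^*(s)\,ds+\gamma\bigl(1-e^{-\sigma_0(t-\tau^*)}\bigr)+h(\tau^*)+a+K_0 .
\]
Hence $h(t)-\int_0^t k^*(s)\,ds\le \gamma+h(\tau^*)+a+K_0$ for $t\ge\tau^*$. On $[0,\tau^*]$, Proposition~\ref{prop-summ}(i) gives $|h'|\le M_0$, so $h$ and $\int_0^tk^*$ are both bounded there.

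\emph{Left boundary, upper bound.} We apply the same construction after the reflection $x\mapsto -x$. The triple $(u(t,-x),-h(t),-g(t))$ solves \eqref{free-bound} with initial datum $u_0(-x)\in I(h_0)$. Its right boundary is $-g$, and Lemma~\ref{lemma-upp} (whose proof uses only the comparison Lemma~\ref{lem-comp} and never involves the left boundary except through $\bar u\ge \delta$ there) yields $-g(t)-\int_0^tk^*(s)\,ds\le C$.

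\emph{Lower bounds.} Lemma~\ref{lemma-lower} gives, for $t\ge n_0T$,
\[
g(t+n_2T)<-\underline h(t),\qquad h(t+n_2T)>\underline h(t)\ge \int_0^t k^*(s)\,ds+K_1 .
\]
Set $s=t+n_2T$. Since $k^*$ is bounded, $\int_0^{s}k^*-\int_0^{s-n_2T}k^*\le n_2Tk_{max}$. Therefore
\[
h(s)-\int_0^s k^*\ge K_1-n_2Tk_{max},\qquad -g(s)-\int_0^sk^*\ge K_1-n_2Tk_{max}
\]
for $s\ge(n_0+n_2)T$. On the compact interval $[0,(n_0+n_2)T]$ we again use the boundedness of $h$ and $g$ from Proposition~\ref{prop-summ}(i).

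Combining the four one-sided estimates gives the constant $Q$. The only point needing care is the reflection argument for $g$. It is justified because the equation is invariant under $x\mapsto -x$ and the lower solution in Lemma~\ref{lemma-lower} is already symmetric, so no genuine obstacle remains; the substantive work was done in the two lemmas.
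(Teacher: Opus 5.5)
Your proposal is correct and follows essentially the same route as the paper. You bound $h(t)-\int_0^tk^*$ from above via $\bar h$ from Lemma~\ref{lemma-upp}, and from below via the shifted lower solution of Lemma~\ref{lemma-lower}; the shift by $n_2T$ costs a bounded amount, which the paper computes exactly as $n_2T\overline{k^*}$ by periodicity. Your reflection $(u(t,-x),-h,-g)$ is simply an explicit justification of the paper's ``similarly'' for $g$.
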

\begin{proof}
  Denote $\tau_0:=\max\{\tau^{*}, (n_2+n_0)T\}$. By Lemmas \ref{lemma-upp} and  \ref{lemma-lower}, we have for $t\geq \tau_0$,
  \begin{eqnarray*}
    -\int_{t-n_2T}^tk^*(s)ds+\beta e^{-\sigma_1(t-n_2T)}+K_1&\leq&  h(t)-\int_0^tk^*(s)ds\\
    &\leq& -\int_0^{\tau^*}k^*(s)ds+\gamma(1-e^{-\sigma_0(t-\tau^{*})})+h(\tau^*)+a+K_0.
  \end{eqnarray*}
Set 
\begin{equation*}
  Q:=2\max\left\{n_2\overline{k^*}T+\beta +K_1,\int_0^{\tau^*}k^*(s)ds+\gamma+h(\tau^*)+a+K_0, \max_{t\in[0,\tau_0]}\Big|h(t)-\int_0^tk^*(s)ds\Big|\right\}.
\end{equation*}
Then 
\begin{equation*}
  \Big|h(t)-\int_0^tk^*(s)ds\Big|\leq Q/2 \text{ for all }t\geq 0.
\end{equation*}
Similarly, we can show
\begin{equation*}
  \Big|g(t)+\int_0^tk^*(s)ds\Big|\leq Q/2 \text{ for all }t\geq 0.
\end{equation*}
The proof is complete.
\end{proof}

\subsection{Convergence along a special time sequence}

Let $(u(t,x), g(t), h(t))$ be the solution to \eqref{free-bound}, $Q$ be given by Proposition \ref{prop-bound},  and
\begin{equation*}\begin{cases}
  l(t):=\int_0^tk^*(s)ds-2Q, \\
  \sigma(t):=g(t)-l(t),\\
   \rho(t):=h(t)-l(t),\\  v(t,x):=u(t,x+l(t)).
   \end{cases}
\end{equation*}
Clearly, 
\begin{equation}\label{eq-c3c}
\lim_{t\to\infty}\sigma(t)=-\infty,\    Q\leq \rho(t)\leq 3Q,
\end{equation}
and
\begin{equation*}
u_x=v_x, ~u_{xx}=v_{xx},~  u_t=v_t-k^*(t)v_x.
\end{equation*}
Hence, $(v(t,x),\sigma(t),\rho(t))$ solves
\begin{equation*}
  \begin{cases}
    v_t-dv_{xx}-k^*(t)v_x=f(t,v), &t>0,\ \sigma(t)<x<\rho(t),\\
   v(t,\rho(t))=\delta, &t>0,\\
    \rho'(t)=h'(t)-l'(t)=-\frac{d}{\delta}v_x(t,\rho(t))-k^*(t),&t>0.\\
  \end{cases}
\end{equation*}
Let $\{t_n\}$ be a positive sequence converging to $\infty$ such that   
\begin{equation*}
  \lim_{n\to\infty}\rho(t_n)=\liminf_{t\to\infty}\rho(t)\in [Q, 3Q].
\end{equation*}
Write $t_n=k_nT+r_n$ with $k_n$ a positive integer and $r_n\in [0, T)$. Up to extraction of a subsequence  we may assume $r_n\to r_*\in[0,T]$.
Define
\begin{equation*}\begin{cases}
  (k_n^*(t),\sigma_n(t),\rho_n(t)):=(k^*(t+t_n-r_*),\sigma(t+t_n-r_*),\rho(t+t_n-r_*)), \\
  v_n(t,x):=v(t+t_n-r_*,x).
  \end{cases}
\end{equation*}
We have  the following result.
\begin{lemma}\label{lem-lim}
  There exists a subsequence of $\{(\rho_n, v_n)\}$, still denoted by  itself, such that
  \begin{equation*}
    \rho_n\to \Gamma \text{ in } C^{1+\alpha/2}_{loc}(\mathbb{R}) \text{  and }\|v_n-V\|_{C_{loc}^{(1+\alpha)/2,1+\alpha}(\overline\Sigma)}\to 0 ~\text{ as } n\to \infty,
  \end{equation*}
  where $\Sigma:=\{(t,x):t\in\mathbb{R}, -\infty<x< \Gamma(t)\}$. Moreover $(V(t,x),\Gamma(t))$ satisfies
  \begin{equation}\label{eq-V}
    \begin{cases}
      V_t-dV_{xx}-k^*(t)V_x=f(t,V), ~V>\delta, &(t,x)\in\Sigma,\\
      V(t,\Gamma(t))=\delta, &t\in\mathbb{R},\\
      \Gamma'(t)=-\frac{d}{\delta}V_x(t,\Gamma(t))-k^*(t),~ \Gamma(t)\geq \Gamma(r_*), &t\in\mathbb R.
    \end{cases}
  \end{equation}
\end{lemma}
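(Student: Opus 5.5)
The plan is a standard compactness argument: straighten the moving boundary, get uniform parabolic estimates, pass to the limit, and finally use the choice of $t_n$ to obtain the inequality $\Gamma(t)\geq\Gamma(r_*)$.

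\textbf{Step 1: straightening.} Set $y=x-\rho_n(t)$, so that $w_n(t,y):=v_n(t,y+\rho_n(t))$ is defined on $y\in(\sigma_n(t)-\rho_n(t),0]$. It satisfies
\[
w_{n,t}-dw_{n,yy}-\big(k_n^*(t)+\rho_n'(t)\big)w_{n,y}=f(t,w_n),
\]
together with $w_n(t,0)=\delta$ and $\rho_n'(t)=-\frac{d}{\delta}w_{n,y}(t,0)-k_n^*(t)$. By \eqref{eq-c3c}, the left endpoint $\sigma_n(t)-\rho_n(t)$ tends to $-\infty$ uniformly for $t$ in any compact set once $n$ is large.

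\textbf{Step 2: uniform estimates.} Proposition \ref{prop-summ}(i) gives $0<u\leq M_0$ and $|h'|\leq M_0$. Hence the drift coefficient $k_n^*+\rho_n'$ is bounded uniformly in $n$, and $\rho_n\in[Q,3Q]$. Interior-and-boundary $L^p$ estimates on the sets $[-L,L]\times[-R,0]$, followed by Sobolev embedding, give a uniform $C^{(1+\alpha)/2,1+\alpha}$ bound on $w_n$. From the Stefan-type condition we then get $\|\rho_n'\|_{C^{\alpha/2}}$ bounded locally.

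To bootstrap, note that the drift is now Hölder. Schauder estimates give local $C^{1+\alpha/2,2+\alpha}$ bounds on $w_n$ up to $y=0$, and therefore $\rho_n$ is bounded in $C^{1+(1+\alpha)/2}_{loc}$, which is compactly embedded in $C^{1+\alpha/2}_{loc}$. A diagonal Arzelà--Ascoli argument produces a subsequence with $\rho_n\to\Gamma$ in $C^{1+\alpha/2}_{loc}(\mathbb R)$ and $w_n\to W$ in $C^{(1+\alpha)/2,1+\alpha}_{loc}(\mathbb R\times(-\infty,0])$. Since $t_n-r_*\in T\mathbb Z$, we have $k_n^*(t)=k^*(t+k_nT+r_n-r_*)\to k^*(t)$ uniformly, by periodicity and continuity. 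Undoing the change of variables with $V(t,x)=W(t,x-\Gamma(t))$ gives the stated convergence of $v_n$ on $\overline\Sigma$, and passing to the limit in the equations yields the PDE, the boundary condition $V(t,\Gamma(t))=\delta$, and the free boundary law in \eqref{eq-V}.

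\textbf{Step 3: $V>\delta$ and the inequality on $\Gamma$.} Proposition \ref{prop-summ}(ii) gives $u\geq\delta$ for $t\geq T_0$, so $V\geq\delta$ on $\Sigma$. If $V$ attained the value $\delta$ at an interior point, the strong maximum principle would force $V\equiv\delta$ on $\Sigma\cap\{s\leq t\}$. This is impossible because $f(t,\delta)>0$, or alternatively because $u\to1$ locally uniformly by Proposition \ref{prop-summ}(iii). Hence $V>\delta$ on $\Sigma$.

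For the inequality, $\Gamma(r_*)=\lim_n\rho(t_n-r_*+r_*)=\lim_n\rho(t_n)=\liminf_{t\to\infty}\rho(t)$. On the other hand, for each fixed $t$, $\Gamma(t)=\lim_n\rho(t+t_n-r_*)\geq\liminf_{s\to\infty}\rho(s)$, which gives $\Gamma(t)\geq\Gamma(r_*)$.

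\textbf{Main obstacle.} The delicate step is obtaining the $C^{1+\alpha/2}$ compactness of $\rho_n$ up to the boundary. This requires the Schauder bootstrap with a drift that is only known to be Hölder after the first step. I would follow the standard argument of \cite{DL15,MDW}: first obtain $C^{1+\alpha}$ estimates on $w_n$ up to $y=0$, then $C^{\alpha/2}$ bounds on $\rho_n'$, and then Schauder estimates to upgrade everything.
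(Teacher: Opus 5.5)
Your proposal is correct and follows essentially the same route as the paper: straighten the free boundary, use $L^p$ estimates with Sobolev embedding, extract a diagonal subsequence, get $V>\delta$ from $u\geq\delta$, $f(t,\delta)>0$ and the strong maximum principle, and get $\Gamma(t)\geq\Gamma(r_*)$ from the $\liminf$ choice of $t_n$. The only differences are minor: the paper straightens by scaling, $\xi=x/\rho_n(t)$, rather than translating; it also avoids your Schauder bootstrap (your ``main obstacle'') by taking the Sobolev exponent $\beta\in(\alpha,1)$, which already bounds $\rho_n$ in $C^{1+\beta/2}$ and so gives precompactness in $C^{1+\alpha/2}_{loc}$ (and note that $t_n-r_*\notin T\mathbb Z$ in general, though your formula $k^*(t+k_nT+r_n-r_*)$ with $r_n\to r_*$ already handles this).
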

\begin{proof}
  By Proposition \ref{prop-summ}, there exists $M_0>0$ such  that $|h'(t)|\leq M_0$, from which we have
  \begin{equation*}
    -M_0-k^*(t)\leq \rho'(t)\leq M_0-k^*(t) ~~\text{ for }t\geq 0.
  \end{equation*}
Define
\begin{equation*}
  \xi:=\frac{x}{\rho_n(t)}, ~~\tilde{V}_n(t,\xi):=v_n(t,x).
\end{equation*}
Then $(\tilde{V}_n(t,\xi),\rho_n(t))$ satisfies
\begin{equation}\label{eq-tilV}
  \begin{cases}
   \partial_t\tilde{V}_n-\frac{d}{\rho_n^2(t)}\partial_{\xi\xi}\tilde{V}_n-\frac{k^*_n(t)+\rho_n'(t)\xi}{\rho_n(t)}\partial_\xi\tilde{V}_n=f(t+t_n-r_*,\tilde{V}_n),&t>-t_n+r_*,\frac{\sigma_n(t)}{\rho_n(t)}\leq\xi\leq 1,\\[2mm]
   \tilde{V}_n(t,1)=\delta,~ \rho_n'(t)=-\frac{d}{\delta\rho_n(t)}\partial_\xi\tilde{V}_n(t,1)-k^*_n(t), &t>-t_n+r_*.
\end{cases}
\end{equation}
Since $u(t,x)$ is uniformly bounded for $t\in[0,\infty)$ and $x\in[g(t),h(t)]$, we know that $\tilde{V}_n$ is uniformly bounded in $\{(t,\xi):t\geq -t_n+r_*, \frac{\sigma_n(t)}{\rho_n(t)}\leq\xi\leq 1 \}$.  Thanks to \eqref{eq-c3c}, for any $\tau>0$, $L>0$  and $s\in\mathbb{R}$, we can apply the standard  interior and boundary $L^p$ estimates to \eqref{eq-tilV} over $[s-\tau,s]\times[-L,1]$ to obtain that, for any $p>1$,
\begin{equation*}
  \|\tilde{V}_n\|_{W_p^{1,2}([s-\tau,s]\times[-L,1])}\leq C_p \text{ for all large }n,
\end{equation*}
where $C_p$ is a constant depending only on $p$, $\tau$ and $L$ but independent of $n$ and $s$. It then follows from the  Sobolev  embedding  theorem that 
\begin{equation*}
  \|\tilde{V}_n\|_{C^{(1+\beta)/2,1+\beta}([s-\tau,s]\times[-L,1])}\leq \tilde{C}_p  \text{ for all large }n,
\end{equation*}
where $\alpha<\beta<1$ and $\tilde{C}_p$ is a constant depending  on $\beta$, $\tau$ and $L$ but independent of $n$ and $s$.
This together with \eqref{eq-tilV} and \eqref{eq-c3c} implies that, for any $s\in\mathbb{R}$, there exists $C_1>0$ depending  on $\beta$, $\tau$ and $L$ but independent of $n$ and $s$ such that 
\begin{equation*}
 Q\leq \rho_n\leq 3Q,\  \|\rho_n\|_{C^{1+\beta/2}([s-\tau,s])}\leq C_1.
\end{equation*}
Hence, up to extraction of a subsequence  through a diagonal process, we have 
\begin{equation*}
  \tilde{V}_n\to \bar{V} \text{ in } C^{(1+\alpha)/2, 1+\alpha}_{loc}(\mathbb R\times (-\infty,1]), ~~\rho_n\to \Gamma\text{ in } C_{loc}^{1+\alpha/2}(\mathbb R).
\end{equation*}
By applying  the standard parabolic regularity theory, one obtains that $(\bar{V}, \Gamma)$ solves the following  problem in the classical sense
\begin{equation*}
  \begin{cases}
  \bar{V}_t-\frac{d}{\Gamma^2(t)}\bar{V}_{\xi\xi}-\frac{k^*(t)+\Gamma'(t)\xi}{\Gamma(t)}\bar{V}_\xi=f(t,\bar{V}), &t\in\mathbb R,~\xi\in(-\infty,1],\\
  \bar{V}(t,1)=\delta,~ \Gamma'(t)=-\frac{d}{\delta\Gamma(t)}\bar{V}_\xi(t,1)-k^*(t), &t\in\mathbb R.
   \end{cases}
   \end{equation*}
It follows from Proposition \ref{prop-summ} that $\bar{V}\geq \delta$ in $\mathbb{R}\times (-\infty,1]$. Due to $f(t,\delta)>0$, we can apply the strong parabolic maximum principle to conclude that $\bar{V}>\delta$ in $\mathbb{R}\times (-\infty,1)$.  Set  $V(t,x)=\bar{V}(t,x/\Gamma(t))$. It is easily seen that $(V,\Gamma)$ solves \eqref{eq-V} and  satisfies
\begin{equation*}
  \lim_{n\to\infty}\|v_n-V\|_{C_{loc}^{(1+\alpha)/2,1+\alpha}(\overline\Sigma)}= 0.
\end{equation*}
Moreover, since 
\begin{equation*}
  \Gamma(r_*)=\lim_{n\to\infty} \rho(t_n)=\liminf_{t\to\infty}\rho(t) \text{ and } \Gamma(t)=\lim_{n\to\infty}\rho(t+t_n-r),
\end{equation*}
 we have  $\Gamma(t)\geq \Gamma(r_*)$ for $t\in\mathbb R$. The proof is thus complete.
\end{proof}

\subsection{Identification of  the limit pair $(V,\Gamma)$.}

By \eqref{eq-c3c}, we have
\[Q\leq \Gamma(t)\leq 3 Q, \quad\forall t\in\mathbb R.\]
From Lemma \ref{lemma-lower} and the $T$-periodic property of $\Phi^*$ in $t$, we see that for $t\geq (n_0+n_2)T$ and $x\in[-\underline{h}(t-n_2T),\underline{h}(t-n_2T)]$,
  \begin{eqnarray*}
  u(t,x)&\geq & \Phi^*(t, -x+\underline{h}(t-n_2T)+x_\delta(t-n_2T))\\
  &&+\, \Phi^*(t, x+\underline{h}(t-n_2T)+x_\delta(t-n_2T)) -1-q_1e^{-\sigma_1(t-n_2T)}.
\end{eqnarray*}
Denote 
{\small\begin{eqnarray*}\begin{cases}
  \Phi_n^-(t,x):=\Phi^*(t+r_n-r_*,-x-l(t+t_n-r_*)+\underline{h}(t+t_n-r_*-n_2T)+x_\delta(t+t_n-r_*-n_2T)),\\
  \Phi_n^+(t,x):=\Phi^*(t+r_n-r_*,\ \ x+l(t+t_n-r_*)+\underline{h}(t+t_n-r_*-n_2T)+x_\delta(t+t_n-r_*-n_2T)).
  \end{cases}
\end{eqnarray*}}
It follows that   
 \begin{equation}\label{eq-vn} 
  v_n(t,x)\geq \Phi_n^-(t,x)+\Phi_n^+(t,x)-1- q_1e^{-\sigma_1(t+t_n-r_*-n_2T)},
 \end{equation}
 for $t+t_n-r_*\geq (n_0+n_2)T$ and
\[
 x\in[-\underline{h}(t+t_n-r_*-n_2T)-l(t+t_n-r_*),\underline{h}(t+t_n-r_*-n_2T)-l(t+t_n-r_*)].
\]
 Since $x_\delta(t)$ is bounded and 
 \begin{equation*}
  r_n\to r_*  ~\text{ and }  \underline{h}(t+t_n-r_*-n_2T)+l(t+t_n-r_*)\to \infty ~~\text{ as }n\to\infty,
 \end{equation*}
 we obtain 
 \begin{equation*}
  \lim_{n\to\infty}\Phi_n^+(t,x)=1 \text{ locally uniformly in } \mathbb{R}\times\mathbb{R}.
 \end{equation*}
Moreover, direct calculations give
\begin{eqnarray*}
  &&\underline{h}(t+t_n-r_*-n_2T)-l(t+t_n-r_*)\\
  &\geq& \int_{0}^{t+t_n-r_*-n_2T}k^*(s)ds+K_1-\int_{0}^{t+t_n-r_*}k^*(s)ds+2Q\\
&=&-\int_{t+t_n-r_*-n_2T}^{t+t_n-r_*}k^*(s)ds+K_1+2Q\\
&\geq &-n_2T\overline{k^*} +K_1+2Q~ \text{ for }t+t_n-r_*\geq (n_0+n_1)T.
\end{eqnarray*}
Hence, there exists $L_0\in \mathbb R$  such that 
\begin{equation*}
  \underline{h}(t+t_n-r-n_2T)-l(t+t_n-r)\geq L_0 ~~\text{ for  }t+t_n-r\geq (n_0+n_1)T.
\end{equation*}
Since $x_\delta(t)\to 0$ as $t\to\infty$, by letting $n\to\infty$ in \eqref{eq-vn} and utilizing the monotonicity of $\Phi^*$, we can deduce from Lemma \ref{lem-lim} that 
\begin{equation}\label{eq-vphi}
  V(t,x)\geq \Phi^*(t,L_0-x) \text{ for }(t,x)\in\mathbb{R}\times(-\infty,L_0].
\end{equation}
Define
\begin{equation*}
  L^*:=\sup\{L\in\mathbb{R}:V(t,x)\geq \Phi^*(t,L-x) \text{ for }(t,x)\in\mathbb{R}\times(-\infty,L]\}.
\end{equation*}
From  \eqref{eq-vphi} we see that $L^*\in [L_0,\infty]$ is well-defined. Since $V(t,L(t))=\delta$ with $Q\leq L(t)\leq 3Q$ and $\Phi^*(t,x)>\delta$ for $t\in\mathbb R$, $x>0$, we must have $L^*\leq \Gamma(r_*)$. Thus we have 
\begin{equation}\label{eq-vpr}
 \begin{cases} V(t,x)\geq \Phi^*(t,L^*-x) \text{ for }(t,x)\in\mathbb{R}\times(-\infty,L^*],\\
 \min_{t\in\mathbb R}\Gamma(t)=\Gamma (r_*)\geq  L^*.
 \end{cases}
\end{equation}

  \begin{lemma}\label{lemm-gr}
$L^*=\Gamma(r_*)$.
\end{lemma}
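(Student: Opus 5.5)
We argue by contradiction. Suppose $L^*<\Gamma(r_*)$; by \eqref{eq-vpr} this means $L^*$ is finite and $\Gamma(t)\geq\Gamma(r_*)>L^*$ for every $t\in\mathbb R$. Set $W_L(t,x):=\Phi^*(t,L-x)$. Since $\Phi^*$ solves \eqref{eqn-wave}, a direct computation gives
\[
\partial_tW_L-d\,\partial_{xx}W_L-k^*(t)\,\partial_xW_L=f(t,W_L)\quad\text{for } x<L,
\]
which is exactly the equation satisfied by $V$ in \eqref{eq-V}, and $W_L(t,L)=\delta$. The aim is to show that $V\geq W_{L^*+\epsilon}$ on $\mathbb R\times(-\infty,L^*+\epsilon]$ for some small $\epsilon>0$. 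This contradicts the definition of $L^*$.

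\textbf{Step 1: a uniform positive margin on a compact $x$-range.} First, $V\leq 1$: this follows from Proposition \ref{prop-summ}(iv) by passing to the limit. Together with $V\geq W_{L^*}\to1$ as $x\to-\infty$, this gives $V(t,x)\to1$ as $x\to-\infty$, uniformly in $t$. Fix $\eta,\sigma_0$ as in \eqref{eq-f_usig}, and choose $K>0$ so large that $W_{L^*+1}$ and $V$ lie in $[1-\eta,1]$ for $x\leq -K$. Choose $\epsilon_1\in(0,\Gamma(r_*)-L^*)$. I claim that
\[
\inf_{t\in\mathbb R,\ x\in[-K,L^*]}\big(V-W_{L^*}\big)>0
\quad\text{and}\quad
\inf_{t\in\mathbb R,\ x\in[L^*,L^*+\epsilon_1]}\big(V-\delta\big)>0 .
\]
Pointwise positivity is clear. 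On $(-\infty,L^*)$, the strong maximum principle applied to $V-W_{L^*}\geq0$ gives strict inequality, because at $x=L^*$ we have $V>\delta=W_{L^*}$ (recall $L^*<\Gamma(t)$ and $V>\delta$ in $\Sigma$). The real issue is uniformity in $t\in\mathbb R$, since $V$ is not periodic; I expect this to be the main obstacle.

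To get uniformity, suppose one of the infima is zero along points $(s_m,x_m)$. Write $s_m=j_mT+\theta_m$ with $j_m\in\mathbb Z$ and $\theta_m\to\theta_*$. Consider the translates $V(\cdot+j_mT,\cdot)$ and $\Gamma(\cdot+j_mT)$. The regularity estimates from the proof of Lemma \ref{lem-lim} make these precompact, and because $k^*$ and $\Phi^*$ are $T$-periodic the translated equation and the comparison function $W_{L^*}$ are unchanged. The limit pair $(V_\infty,\Gamma_\infty)$ again satisfies \eqref{eq-V}, $\Gamma_\infty\geq\Gamma(r_*)$, and $V_\infty\geq W_{L^*}$. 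For the second infimum, $V_\infty(\theta_*,x_\infty)=\delta$ with $x_\infty<\Gamma_\infty(\theta_*)$ contradicts $V_\infty>\delta$ in its domain. For the first, we would get $V_\infty=W_{L^*}$ at an interior point or at $x=L^*$. The boundary case is again excluded by $V_\infty>\delta$; the interior case is excluded by the strong maximum principle, since $V_\infty-W_{L^*}\geq0$ solves a linear parabolic equation and is strictly positive at $x=L^*$.

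\textbf{Step 2: shift by a small $\epsilon$.} Because $\Phi^*$ is uniformly continuous and increasing, $W_{L^*+\epsilon}\to W_{L^*}$ uniformly as $\epsilon\to0$. For small $\epsilon\leq\epsilon_1$, the first margin therefore gives $V\geq W_{L^*+\epsilon}$ on $[-K,L^*]$. On $[L^*,L^*+\epsilon]$ we have $W_{L^*+\epsilon}\leq\Phi^*(t,\epsilon)$, which is close to $\delta$, so the second margin gives $V\geq W_{L^*+\epsilon}$ there as well.

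\textbf{Step 3: the far field $x\leq -K$.} Here $w:=V-W_{L^*+\epsilon}$ is bounded and satisfies
\[
w_t-dw_{xx}-k^*(t)w_x=c(t,x)\,w,\qquad c(t,x)=\int_0^1 f_u\big(t,W_{L^*+\epsilon}+\theta w\big)\,d\theta\leq-\sigma_0 ,
\]
using \eqref{eq-f_usig}, with $w(t,-K)\geq0$ by Step 2. Since this solution is entire in time, we compare $w$ from below with $-\|w\|_\infty e^{-\sigma_0(t-s)}$ on $[s,\infty)\times(-\infty,-K]$ and let $s\to-\infty$; this gives $w\geq0$. The unbounded spatial domain is handled by the usual Phragmén–Lindelöf-type argument for bounded solutions, or by adding a small term $\varepsilon' \cosh(\kappa x)$ with $\kappa$ small and letting $\varepsilon'\to0$.

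Combining Steps 2 and 3 gives $V\geq W_{L^*+\epsilon}$ on $\mathbb R\times(-\infty,L^*+\epsilon]$, contradicting the definition of $L^*$. Hence $L^*=\Gamma(r_*)$.
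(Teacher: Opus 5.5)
Your proof is correct, and its architecture matches the paper's except at the far-field step. Your Step 1 corresponds to the paper's Steps 1--2: the uniform margin $\chi(-K)>0$ via a compactness argument. You translate only by multiples of $T$ in time, which keeps $W_{L^*}$ fixed and is slightly cleaner than the paper's joint $(t,x)$-translation. Your Step 2 is the paper's Step 4.

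The genuine difference is how you obtain $V\ge \Phi^*(t,L^*-x+\epsilon)$ for $x\le -K$.
\begin{itemize}
\item The paper solves an auxiliary Dirichlet problem on $(L,L_0)$ with boundary data $1$ and $\Phi^*(t,L^*-L_0+\epsilon)$ and initial value $1-\epsilon_0$. It lets $L\to-\infty$, then shifts $t\mapsto t+kT$ with $k\to\infty$, to get a periodic solution $\tilde w$. It identifies $\tilde w$ with the shifted wave by a maximum argument using $f_u<0$ near $1$, and finally compares $V$ with $w_\infty(\cdot+kT,\cdot)$ through the nonlinear comparison principle.
\item You linearize instead. Since $V$ and the shifted wave both lie in $[1-\eta,1]$ there, $w=V-W_{L^*+\epsilon}$ solves a linear equation whose zeroth-order coefficient is $\le-\sigma_0$. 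The decaying subsolution $-\|w\|_\infty e^{-\sigma_0(t-s)}$, with $s\to-\infty$ and the standard $\cosh(\kappa x)$ device on the half-line, then gives $w\ge0$ directly.
\end{itemize}

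Both arguments rest on the same two facts: $V$ is an entire solution, and the state $1$ is linearly stable. Yours avoids constructing and identifying a periodic solution. The modest cost is that you need the upper bound $V\le 1$, which you correctly obtain from Proposition \ref{prop-summ}(iv), so that the whole segment between $V$ and the shifted wave lies where \eqref{eq-f_usig} applies. The paper uses the sign of $f_u$ only to identify $\tilde w$ with the shifted wave.

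One small slip: choose $K$ so that $W_{L^*}$, not $W_{L^*+1}$, is at least $1-\eta$ for $x\le-K$. Since $W_{L^*}\le W_{L^*+\epsilon}\le 1$ and $W_{L^*}\le V\le 1$, this is what places both endpoints of that segment in $[1-\eta,1]$ for every small $\epsilon$.
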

\begin{proof}
Suppose to the contrary that $L^*<\Gamma(r_*)$. We are going to derive a contradiction in  several steps.

\textbf{ Step 1.} We prove that 
\begin{equation}\label{eq-v>p}
  V(t,x)> \Phi^*(t,L^*-x) ~~\text{ for } (t,x)\in\mathbb R \times(-\infty,L^*].
\end{equation}
Otherwise, in view of \eqref{eq-vpr}, there exists $(t_0,x_0)\in \mathbb R \times(-\infty,L^*]$ such that
\begin{equation*}
  V(t_0,x_0)= \Phi^*(t_0,L^*-x_0).
\end{equation*}
Since $L^*<\Gamma(r_*)\leq \Gamma(t)$, we obtain $V(t,L^*)>\delta=\Phi^*(t,0)$ for all $t\in\mathbb R$, which implies $x_0<L^*$. Thus $w(t,x):=V(t,x)-\Phi^*(t,L^*-x)$ satisfies $w\geq 0$ in $\mathbb R \times(-\infty,L^*]$ and
\begin{equation*}\begin{cases}
  w_t-dw_{xx}-k^*(t)w_x-c(t,x)w=0 &\text{ for }(t,x)\in\mathbb R \times(-\infty,L^*),\\
  w(t,L^*)> 0 &\text{ for }t\in\mathbb R,
\end{cases}
\end{equation*}
with 
$$c(t,x):=\int_0^1\partial_uf(t,s\Phi^*+(1-s)V)ds\ \  \mbox{ a bounded function.}$$
  By the parabolic strong maximum principle, we deduce  $w(t,x)>0$ for $(t,x)\in\mathbb R \times(-\infty,L^*]$, which contradicts  $w(t_0,x_0)=0$. Hence, \eqref{eq-v>p} holds.

 \textbf{ Step 2.} We show that for any $y<L^*$, 
 \begin{equation}\label{eq-chi}
  \chi (y):=\inf_{(t,x)\in\mathbb{R}\times[y,L^*]}[V(t,x)-\Phi^*(t,L^*-x)]>0.
 \end{equation}
Clearly, $\chi(y)\geq 0$ for $y<L^*$. Assume to the contrary that \eqref{eq-chi} does not hold,  then there exists  $y_0\in(-\infty,L^*)$ such that 
\begin{equation*}
  \chi(y_0)=0.
\end{equation*}
Hence, we can find a sequence $\{(s_n,x_n)\}\subset \mathbb{R}\times [y_0,L^*]$ such that 
\begin{equation*}
  \lim_{n\to\infty}[V(s_n,x_n)-\Phi^*(s_n,L^*-x_n)]=0.
\end{equation*}
In view of Step 1, we have $|s_n|\to\infty$. Write $s_n=\tilde{k}_nT+\tilde{r}_n$ with $\tilde{k}_n\in\mathbb Z$, $\tilde r_n\in [0, T)$. Up to extraction of a subsequence if necessary, we may assume that $\tilde{r}_n\to\tilde{r}\in[0,T]$ and $x_n\to x_0\in[y_0,L^*]$. Set 
\begin{equation*}
  (V_n(t,x), \Gamma_n(t)):=(V(t+s_n-\tilde{r},x+x_n),\Gamma(t+s_n-\tilde{r})-x_n).
\end{equation*} 
 By applying the standard parabolic estimates and up to extraction of a subsequence,  we can argue similarly  to  Lemma \ref{lem-lim} to conclude that 
 \begin{equation*}
  (V_n(t,x), \Gamma_n(t))\to (V^\infty,\Gamma^\infty) \text{ in }C_{loc}^{(1+\alpha)/2,1+\alpha}(\overline{\Sigma}_\infty)\times C_{loc}^{1+\alpha/2}(\mathbb{R}),
 \end{equation*}
 where ${\Sigma}_\infty:=\{(t,x):t\in\mathbb{R},x<\Gamma^\infty(t)\}$ and $(V^\infty,\Gamma^\infty)$ satisfies
\begin{equation}\label{eq-Vinf}\begin{cases}
  V^\infty_t-dV^\infty_{xx}-k^*(t)V^\infty_x=f(t,V^\infty), V^\infty>\delta &\text{ for }(t,x)\in{\Sigma}_\infty,\\
  V^\infty(t,\Gamma^\infty(t))=\delta &\text{ for }t\in\mathbb R
\end{cases}
\end{equation}
and 
\begin{equation}\label{eq-Vin}
  V^\infty(\tilde{r},0)=\Phi^*(\tilde{r}, L^*-x_0).
\end{equation}
Moreover, $\Gamma_n(t)=\Gamma(t+s_n-\tilde{r})-x_n\geq \Gamma(r_*)-x_n $ leads to
 $\Gamma^\infty(t)+x_0\geq \Gamma(r_*)>L^*$ and \eqref{eq-v>p} yields
\begin{equation*}
  V^\infty(t,x)\geq \Phi^*(t,L^*-x_0-x)~~ \text{ for }(t,x)\in\mathbb{R} \times (-\infty, L^*-x_0].
\end{equation*}
In addition, it is easily seen that $\Phi^*(t,L^*-x_0-x)$ solves \eqref{eq-Vinf} with $\Gamma^\infty(t)$ replaced by $L^*-x_0$. Due to $L^\infty(t)>L^*-x_0>0$, we have
\begin{equation*}
  V^\infty(t,L^*-x_0)>\delta=\Phi^*(t,0) ~~\text{ for }t\in\mathbb{R}.
\end{equation*} 
Hence, we can apply the parabolic strong maximum principle to compare $V^\infty(t,x)$ with $\Phi^*(t,L^*-x_0-x)$ over $(-\infty,0]\times (-\infty,L^*-x_0)$ to conclude that 
\begin{equation*}
  V^\infty(t,x) >\Phi^*(t,L^*-x_0-x) \text{ for }t\leq \tilde{r},\ x<L^*-x_0,
\end{equation*}
which contradicts \eqref{eq-Vin}. Therefore, \eqref{eq-chi} holds for all $y< L^*$.

\textbf{Step 3.} We find a constant $L_0<L^*$ such that  $V(t,x)\geq \Phi^*(t,L^*-x+\epsilon)$ in $\mathbb{R}\times(-\infty,L_0]$ for all small $\epsilon>0$.

Since $f_u(\cdot,1)<0$ and $f$ is $C^1$ in $u$ uniformly for $t\in\mathbb R$,  there exists $\epsilon_0>0$ small enough such that  
\begin{equation}\label{eq-f_u}
  f_u(t,u)<0 ~~\text{ for }t\in\mathbb{R},\ u\in[1-\epsilon_0,1].
\end{equation} 
Owing to $\Phi^*(t,x) \to\infty$ as $x\to\infty$ uniformly in $t\in\mathbb R$,   we can find $L_0=L_0(\epsilon_0)<L^*$ with $L^*-L_0\gg 1$  such that 
\begin{equation}\label{eq-pl1}
  \Phi^*(t,L^*-x)\geq 1-\epsilon_0 ~~\text{ for }t\in\mathbb{R},\ x\leq L_0.
\end{equation}
By continuity,  there exists  $\epsilon\in(0,\epsilon_0)$ small enough (depending on $L^*-L_0$) such that
\begin{equation}\label{eq-pchi}
  \Phi^*(t,L^*-L_0+\epsilon)\leq \Phi^*(t,L^*-L_0)+\chi(L_0)~ \text{ for }t\in\mathbb R,
\end{equation}
where $\chi$ is defined by \eqref{eq-chi}.
Fix any such  $\epsilon$. For any $L<L_0$, consider the following auxiliary problem 
\begin{equation}\label{eq-wR}
  \begin{cases}
    w_t-dw_{xx}-k^*(t)w_x=f(t,w), &t>0, x\in(L,L_0),\\
    w(t,L)=1, ~w(t,L_0)=\Phi^*(t,L^*-L_0+\epsilon), &t>0,\\
    w(0,x)=1-\epsilon_0, &x\in[L,L_0].
  \end{cases}
\end{equation}
Clearly, $1$ and $1-\epsilon_0$ form a pair of upper and lower solutions of \eqref{eq-wR}.  It follows from the standard upper and lower solution arguments that \eqref{eq-wR} admits a maximal solution 
$$w_L\in C_{loc}^{1+\alpha/2,2+\alpha}((0,\infty)\times[L,L_0])\cap C_{loc}^{1+\alpha/2,2+\alpha}([0,\infty)\times(L,L_0))$$
 satisfying
\begin{equation}\label{eq-1pw}
 1-\epsilon_0\leq  w_L(t,x)\leq 1 ~~\text{ for }t>0,\ x\in[L,L_0].
\end{equation}
By applying the usual comparison principle, we know that $w_L$ increases in $L$ in the sense that $L_2<L_1$ implies $w_{L_2}(t,x)\leq w_{L_1}(t,x)$. Hence, the pointwise limit
\begin{equation*}
   w_\infty(t,x):=\lim_{L\to\infty}w_L(t,x) \ \mbox{ for } t>0,x\in(-\infty,L_0]
 \end{equation*}
 is well-defined and  $1-\epsilon_0\leq w_\infty\leq 1$. By applying  the standard parabolic estimates, we conclude that the above convergence also holds in $C_{loc}^{1,2}([0,\infty)\times(-\infty,L_0))$,  and thus $w_\infty$ solves \eqref{eq-wR} in $[0,\infty) \times(-\infty,L_0]$  except for the first equation in the second line.
Since $1-\epsilon_0$ is a  lower solution and $f$  is $T$-periodic in $t$,  $w_\infty(t+kT,x)$ is nondecreasing  in $k\in\mathbb R$. Hence, the pointwise limit
\begin{equation*}
  \tilde{w}(t,x)=\lim_{k\to\infty}w_\infty(t+kT,x)
\end{equation*}
is well-defined and $\tilde{w}(t,x)$ is $T$-periodic  in $t$ with $1-\epsilon_0\leq \tilde{w}\leq 1$.   Similar to before,  we deduce that  $\tilde{w}$ solves
\begin{equation}\label{eq-wtild}
  \begin{cases}
    \tilde{w}_t-d\tilde{w}_{xx}-k^*(t)\tilde{w}_x=f(t,\tilde{w}), ~1-\epsilon_0\leq \tilde{w}\leq 1, &t\in\mathbb R,\  x\in(-\infty,L_0),\\
    \tilde{w}(t,L_0)=\Phi^*(t,L^*-L_0+\epsilon), &t\in\mathbb R,\\
    \tilde{w}(t,x)=\tilde{w}(t+T,x), &t\in\mathbb R,\  x\in(-\infty,L_0].
  \end{cases}
\end{equation}
We claim that $\tilde{w}(t,-\infty)=1$. Indeed, for any sequence $\{x_n\}$  satisfying $x_n\to-\infty$ as $n\to\infty$, if we  denote 
\[\tilde{w}_n(t,x):=\tilde{w}(t,x+x_n) \text{ for } t\in\mathbb{R}, x\in(-\infty, L_0-x_n],\]
then $\tilde{w}_n(t,x)$ satisfies \eqref{eq-wtild} with $L_0$ replaced by $L_0-x_n$. Hence, by using the standard parabolic estimates and up to extraction of a subsequence, we know that there exists a function $\hat{w}$ such that 
\[\tilde{w}_n(t,x)\to\hat{w}(t,x) \text{  as } n\to\infty \text{ locally uniformly in } C^{1, 2}(\mathbb{R}\times\mathbb{R}).\]
Moreover, $\hat{w}(t,x)$ satisfies 
\[ \begin{cases}
  \hat{w}_t-d\hat{w}_{xx}-k^*(t)\hat{w}_x=f(t,\hat{w}), ~1-\epsilon_0\leq \hat{w}\leq 1, &t\in\mathbb R, x\in\mathbb{R},\\
  \hat{w}(t,x)=\hat{w}(t+T,x), &t\in\mathbb R, x\in\mathbb{R}.
\end{cases}
\]
The usual comparison principle yields that $\hat{w}(t,x)\geq p(t)$ with $p(t)$ solving
\[p_t=f(t,p),~t>0;~~~p(0)=1-\epsilon_0.\]
Due to $f(\cdot,u)>0$ for $u\in(0,1)$, we know that $p(t+kT)\to 1$  locally uniformly in $t\in\mathbb{R}$ as the integer $ k\to\infty$. Therefore, 
\[1\geq \hat{w}(t,x)=\lim_{k\to\infty}\hat{w}(t+kT,x)\geq \lim_{k\to\infty}p(t+kT)=1  \text{ locally uniformly in }(t,x)\in\mathbb{R}\times\mathbb R,\]
which implies that 
\[\tilde{w}(t,x+x_n)\to 1   \text{ locally uniformly in } C^{1, 2}(\mathbb{R}\times\mathbb{R}).\]
By the arbitrariness of  $\{x_n\}$, we conclude that $\tilde{w}(t,-\infty)=1$. The claim is thus proved.

We now set 
$$\Psi(t,x):=\Phi^*(t,L^*-x+\epsilon) ~\text{ for }t\in\mathbb R, x\leq L_0.$$
Clearly, $\Psi$ also satisfies \eqref{eq-wtild}. Thanks to \eqref{eq-pl1}, one has $\Psi(t,x)\geq 1-\epsilon_0$ for $x\leq L_0$. Therefore, we can compare $\Psi$ and $w_\infty$  to obtain
\begin{equation*}
  \Psi(t+kT,x)\geq  w_\infty(t+kT,x)~~\text{ for } t>-kT,\  x\in(-\infty,L_0],\  k\in\mathbb N.
\end{equation*}
Now, letting $k\to\infty$, one gets
\begin{equation}\label{eq-wp1}
  \Psi(t,x)\geq \tilde{w}(t,x)\geq 1-\epsilon_0~~\text{ for }t\in\mathbb{R},\ x\in(-\infty,L_0].
\end{equation}

We claim that 
\begin{equation*}
  \Psi(t,x)\equiv \tilde{w}(t,x)~~ \text{ for }t\in\mathbb{R},\  x\in(-\infty,L_0].
\end{equation*}
Denote
\begin{equation*}
  U(t,x):=\Psi(t,x)-\tilde{w}(t,x).
\end{equation*}
Then $U\geq 0$ satisfies 
\begin{equation}\label{eq-U}\begin{cases}
  U_t-dU_{xx}-k^*(t)U_x-f_u(t,\theta(t,x))U=0,&t\in\mathbb{R},\  x\in(-\infty,L_0),\\
  U(t,-\infty)=U(t,L_0)=0,&t\in\mathbb{R},\\
  U(t,x)=U(t+T,x), &t\in\mathbb{R},\ x\in(-\infty, L_0],
\end{cases}
\end{equation}
where $\theta(t,x)\in[\tilde{w}(t,x),\Psi(t,x)]\subset[1-\epsilon_0,1]$.
If the claim does not hold, then in view of \eqref{eq-wp1} and $U(t,-\infty)=0$, there exists $(t_0,x_0)\in\mathbb{R}\times(-\infty,L_0)$ such that 
$$U(t_0,x_0)=\max_{(t,x)\in\mathbb{R}\times(-\infty,L_0)}U(t,x)>0.$$
Since  $f_u(\cdot,u)<0$ for $u\in[1-\epsilon_0,1]$, using the first equation of \eqref{eq-U} we obtain
\begin{equation*}
  0=\big[U_t-dU_{xx}-k^*(t)U_x-f_u(t,\theta(t,x))U\big]\big|_{(t_0,x_0)}\geq -f_u(t,\theta(t_0,x_0))U(t_0,x_0)>0.
\end{equation*}
This contradiction proves the claim.

By \eqref{eq-v>p},\eqref{eq-chi} and \eqref{eq-pchi}, we have
\begin{eqnarray*}
  &&V(t,x)> \Phi^*(t,L^*-x)\geq 1-\epsilon_0 ~~\text{ for } (t,x)\in\mathbb R \times(-\infty,L_0],\\
  && V(t,L_0)\geq \Phi^*(t,L^*-L_0)+\chi (L_0)\geq  \Phi^*(t,L^*-L_0+\epsilon),
\end{eqnarray*}
 and thus we can compare $V$ with $w_\infty$  to deduce that 
\[ V(t,x)\geq w_\infty(t+kT,x)~\text{ for }t>-kT,\ x\in(-\infty,L_0],\  k\in\mathbb N.\]
Letting $k\to\infty$ we obtain 
\begin{equation}\label{eq-vw}
  V(t,x)\geq \tilde{w}(t,x)=\Phi^*(t,L^*-x+\epsilon) ~\text{ for }t\in\mathbb R,\ x\in(-\infty,L_0].
\end{equation}

\textbf{Step 4.} Completion of the proof.

By Step 2, we know that
\begin{equation*}
  \varrho:=\chi(L_0)>0.
\end{equation*}
Since $L^*<\Gamma(r_*)\leq \Gamma(t)$ for all $t\in\mathbb R$, we deduce from the  interior parabolic estimates that 
\begin{equation*}
  |V_x(t,x)|\leq C ~~\text{ for } t\in\mathbb R,~ L^*\leq x\leq (\Gamma(r_*)+L^*)/2,
\end{equation*}
and thus, for any $\epsilon_1\in(0,\varrho ]$ small enough, we have
\begin{eqnarray*}
 V(t,x)>V(t,L^*)-\varrho/2 ~~\text{ for }t\in\mathbb R,\  x\in[L^*,L^*+\epsilon_1].
\end{eqnarray*}
Similarly,  one has
\[\Phi^{*}(t,L^*-x+\epsilon_1)\leq \Phi^*(t,L^*-x)+\varrho/2 ~~\text{ for }t\in\mathbb R,\ x\in[L_0, L^*].\]
Recall that 
\begin{equation*}
V(t,x)-\Phi^*(t,L^*-x)\geq \chi (L_0)=\varrho  \text{ for }(t,x)\in\mathbb{R}\times[L_0,L^*].
\end{equation*}
As a consequence, for $t\in\mathbb{R}$ and $x\in[L_0,L^*]$, we readily have 
\begin{equation*}
  V(t,x)-\Phi^*(t,L^*-x+\epsilon_1)\geq \Phi^*(t,L^*-x)+\varrho - \Phi^*(t,L^*-x)-\varrho/2\geq \varrho/2>0.
\end{equation*}
Up to decreasing $\epsilon_1$ if necessary,  we may also assume
\begin{equation*}
  \Phi^*(t,L^*-x+\epsilon_1)<\Phi^*(t,0)+\varrho/2 ~~\text{ for } x\in[L^*,L^*+\epsilon_1],\ t\in\mathbb R.
\end{equation*}
Hence,  for $t\in\mathbb{R}$ and $x\in[L^*,L^*+\epsilon_1]$, we have
\begin{equation*}
  V(t,x)-\Phi^*(t,L^*-x+\epsilon_1)> V(t,L^*)-\varrho /2 - \Phi^*(t,0)-\varrho/2\geq0.
\end{equation*}
These inequalities, together  with \eqref{eq-vw}, finally give 
\begin{equation*}
  V(t,x)-\Phi^*(t,L^*-x+\epsilon_1)\geq0 \text{ for } t\in\mathbb{R},\ x\in(-\infty,L^*+\epsilon_1)
\end{equation*}
for all small $\epsilon_1\in(0,\epsilon)$, which violates the definition of $L^*$. Therefore, $L^*=\Gamma(r_*)$ and the lemma is thus  proved.
\end{proof}

\begin{proposition}
  $V(t,x)\equiv \Phi^*(t,L^*-x)$ and $\Gamma(t)\equiv L^*$.
\end{proposition}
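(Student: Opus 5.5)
The plan is to compare $V$ with $W(t,x):=\Phi^*(t,L^*-x)$ using the strong maximum principle and the Hopf lemma at the point $(r_*,L^*)$. There $\Gamma$ attains its minimum, by Lemma \ref{lemm-gr} and \eqref{eq-vpr}. This should first give $V\equiv W$ for $t\le r_*$; uniqueness for the free boundary problem \eqref{eq-V} then propagates the identity forward in time.

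\emph{Step 1: $W$ is a solution of the same free boundary problem with boundary $\Gamma\equiv L^*$.} $W$ solves $W_t-dW_{xx}-k^*(t)W_x=f(t,W)$ for $x<L^*$, and $W(t,L^*)=\delta$. By \eqref{eq-phix1},
\[
-\tfrac{d}{\delta}W_x(t,L^*)=\tfrac{d}{\delta}\Phi^*_x(t,0)=k^*(t).
\]
So the constant boundary $\tilde\Gamma\equiv L^*$ satisfies $\tilde\Gamma'=0=-\tfrac d\delta W_x(t,\tilde\Gamma)-k^*(t)$, which is the free boundary condition in \eqref{eq-V}.

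\emph{Step 2: identification for $t\le r_*$.} Since $\Gamma(t)\ge\Gamma(r_*)=L^*$, the half-strip $(-\infty,r_*]\times(-\infty,L^*]$ lies in $\overline\Sigma$. Set $w:=V-W$. By \eqref{eq-vpr}, $w\ge0$ there, and $w$ satisfies a linear equation $w_t-dw_{xx}-k^*w_x-c(t,x)w=0$ with bounded $c$, as in Step 1 of Lemma \ref{lemm-gr}. Moreover $w(r_*,L^*)=V(r_*,\Gamma(r_*))-\delta=0$.

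Suppose $w\not\equiv0$ on $(-\infty,r_*]\times(-\infty,L^*)$. The strong maximum principle gives $w>0$ in $(-\infty,r_*]\times(-\infty,L^*)$. The boundary $x=L^*$ is flat, so the parabolic Hopf lemma at $(r_*,L^*)$ yields $w_x(r_*,L^*)<0$, that is,
\[
V_x(r_*,L^*)<W_x(r_*,L^*)=-\tfrac{\delta}{d}k^*(r_*).
\]
Hence $\Gamma'(r_*)=-\tfrac d\delta V_x(r_*,\Gamma(r_*))-k^*(r_*)>0$. This contradicts $\Gamma'(r_*)=0$, which holds because $r_*$ is an interior minimum point of the $C^1$ function $\Gamma$.

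Therefore $V\equiv W$ on $(-\infty,r_*]\times(-\infty,L^*]$. In particular $V(t,L^*)=\delta$ for $t\le r_*$. Since $V>\delta$ in $\Sigma$, the point $L^*$ cannot lie strictly below $\Gamma(t)$, and together with $\Gamma\ge L^*$ this gives $\Gamma(t)=L^*$ for all $t\le r_*$.

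\emph{Step 3: extension to $t>r_*$.} Both $(V,\Gamma)$ and $(W,L^*)$ solve \eqref{eq-V} on $t\ge r_*$. They are bounded (using Proposition \ref{prop-summ}(i) for $V$) and coincide at $t=r_*$. I would invoke uniqueness of bounded solutions of this one-phase problem on a half-line. One route is to straighten the boundary via $\xi=x-\Gamma(t)$, as in \eqref{eq-tilV}, and run the standard contraction/energy argument used for well-posedness of \eqref{free-bound} in \cite{MDW}; the unbounded domain is harmless because both solutions are bounded with bounded derivatives. This gives $V\equiv\Phi^*(t,L^*-x)$ and $\Gamma\equiv L^*$ for all $t$.

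I expect the main obstacle to be Step 3. The comparison arguments alone only show that $w$ is nonnegative after $r_*$. The Hopf trick from Step 2 cannot be repeated at later times, because the minimum of $\Gamma$ need not be attained again there. So a genuine uniqueness statement for the free boundary problem on an unbounded interval is needed, and I would verify it carefully in the straightened coordinates.
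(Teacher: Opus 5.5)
Your Steps 1--2 are the paper's proof: $V\ge\Phi^*(t,L^*-x)$ from \eqref{eq-vpr}, equality at $(r_*,\Gamma(r_*))$, then the strong maximum principle and the Hopf lemma, contradicted by $\Gamma'(r_*)=0$ from the minimality of $\Gamma(r_*)$.

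The difference is in what happens next. The paper stops there and reads off $V\equiv\Phi^*(t,L^*-x)$ for all $t$. You rightly note that this argument only propagates equality backward in time. By itself it gives the identity, and $\Gamma\equiv L^*$, only for $t\le r_*$. (A minor point: if $w\not\equiv0$, the strong maximum principle gives $w>0$ on some slab $[t_2,r_*]\times(-\infty,L^*)$ rather than on the whole half-strip. That is all the Hopf lemma needs.)

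Your Step 3 closes the forward direction with a uniqueness theorem for the one-phase problem on a half-line. This is sound in principle. In the variable $y=x-\Gamma(t)$ the problem becomes $\tilde V_t-d\tilde V_{yy}+\frac{d}{\delta}\tilde V_y(t,0)\tilde V_y=f(t,\tilde V)$ with $\tilde V(t,0)=\delta$, and a short-time contraction using uniform local Schauder estimates works. But it is an additional theorem you would have to prove, since \cite{MDW} only treats bounded intervals with two free boundaries.

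A cheaper route avoids any uniqueness theory:
\begin{enumerate}
\item The rest of the proof of Theorem \ref{thm-exact} uses the identification only at $t=r_*$, through \eqref{eq-limup}, and your $t\le r_*$ result already supplies this.
\item Step 2 of the completion of that proof then gives $\rho(t)\to L^*$, so $\Gamma(t)=\lim_n\rho(t+t_n-r_*)\equiv L^*$.
\item Hence $\Gamma'\equiv0$, and your Hopf argument can be run at every point $(t,L^*)$. This gives $V\equiv\Phi^*(t,L^*-x)$ on all of $\mathbb R\times(-\infty,L^*]$.
\end{enumerate}

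Alternatively, since $V(r_*,\cdot)=\Phi^*(r_*,L^*-\cdot)$ exactly, you can adapt the upper solution of Lemma \ref{lemma-upp} to the moving frame and start it at $r_*$. The initial shift, $q_0$ and $\gamma$ can then be taken arbitrarily small, which gives $\Gamma\le L^*$ on $[r_*,\infty)$ and again reduces everything to the Hopf argument.
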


\begin{proof}
  By \eqref{eq-vpr} and  Lemma \ref{lemm-gr}, we know that $L^*=\Gamma(r_*)=\min_{t\in\mathbb R} \Gamma(t)$ and 
  \begin{equation*}
    V(t,x)\geq \Phi^*(t,L^*-x) \text{ for }t\in\mathbb{R},\ x\leq L^*
  \end{equation*}
  with 
  \begin{equation*}
    V(r_*,\Gamma(r_*))=\Phi^*(r_*,0)=\delta.
  \end{equation*}
 The parabolic strong maximum principle and Hopf boundary lemma then imply that
  \begin{equation*}
    \text{ either } V(t,x)\equiv \Phi^*(t,L^*-x) \text{ or } V_x(r_*,\Gamma(r_*))<-\Phi^*_x(r_*,0).
  \end{equation*} 
 By  \eqref{eq-V} and the minimality of $\Gamma(r_*)$, 
  \begin{equation*}
   0= \Gamma'(r_*)=-\frac{d}{\delta}V_x(r_*,\Gamma(r_*))-k^*(r_*),
  \end{equation*}
  and hence, 
  \begin{equation*}
    V_x(r_*,\Gamma(r_*))=-\frac{\delta}{d}k^*(r_*)=-\Phi_x^*(r_*,0).
  \end{equation*}
  Thus we necessarily have 
  \begin{equation}\label{eq-vp}
    V(t,x)\equiv \Phi^*(t,L^*-x), \text{ which implies }L(t)\equiv L^*. 
  \end{equation}
  The proof is complete.
\end{proof}
\subsection{Completion of the proof of Theorem \ref{thm-exact}}
We will finish the proof in three steps.\medskip

\textbf{Step 1.} Let $\{t_n\}$ be the time sequence used  in Lemma \ref{lem-lim}, with $r_n=t_n-k_nT\to r_*\in [0,T]$. We show that
\begin{equation*}
  \begin{cases}
  \lim_{n\to\infty}\big[h'(t+t_n-r_*)-k^*(t+t_n-r_*)\big]=0  \text{ for }t\in\mathbb R, \\
  \lim_{n\to\infty}\sup_{x\in[0,h(t_n)]}|u(t_n,x)-\Phi^*(t_n,h(t_n)-x)|=0.\\
  \end{cases}
\end{equation*}
By \eqref{eq-vp} and Lemmas \ref{lem-lim} and \ref{lemm-gr}, we have,  as $n\to\infty$,
\[\begin{cases}\rho_n(t)=h(t+t_n-r_*)-\int_0^{t+t_n-r_*}k^*(s)ds+2Q\to L^*& \text{ in } C_{loc}^{1+\alpha/2}(\mathbb R),\\
u(t+t_n-r_*,x+l(t+t_n-r_*))\to \Phi^*(t,L^*-x) &\text{ in } C_{loc}^{(1+\alpha)/2,1+\alpha}(\mathbb{R}\times(-\infty,0]).
\end{cases}\]
Hence,  
\[h'(t+t_n-r_*)-k^*(t+t_n-r_*)\to 0 \text{ in } C_{loc}^{\alpha/2}(\mathbb R) \text{ as }n\to\infty,\]
and for any $R>0$,
\begin{equation}\label{eq-uphi}
\begin{aligned}
  &\lim_{n\to\infty}\|u(t_n,\cdot)-\Phi^*(t_n,h(t_n)-\cdot)\|_{L^\infty( [h(t_n)-R,h(t_n)])}\\
  &=\lim_{n\to\infty}\|u(t_n,\cdot)-\Phi^*(r_*,h(t_n)-\cdot)\|_{L^\infty( [h(t_n)-R,h(t_n)])}=0.
  \end{aligned}
\end{equation}
 By  Lemmas \ref{lemma-upp} and \ref{lemma-lower},  for any $\epsilon>0$,  there exist  an integer $N>0$ and a constant  $R_1>0$ large enough such that 
\[1-\epsilon\leq  u(t_n,x)\leq 1+\epsilon ~\text{ for }x\in[0,h(t_n)-R_1],\ n\geq N.\]
By increasing $R_1$ if necessary, we also have
\[1-\epsilon\leq \Phi^*(t_n, h(t_n)-x)\leq 1+\epsilon,~ \forall n\in\mathbb N,\ x\in(-\infty,h(t_n)-R_1].\]
It follows that 
\[ \|u(t_n,\cdot)- \Phi^*(t_n, h(t_n)-\cdot)\|_{L^\infty([0,h(t_n)-R_1])}\leq 2\epsilon \mbox{  for $n\geq N$}.\]
 This, combined with \eqref{eq-uphi}, yields
\begin{equation}\label{eq-limup}
  \lim_{n\to\infty}\|u(t_n,\cdot)- \Phi^*(t_n, h(t_n)-\cdot)\|_{L^\infty([0,h(t_n)])}=0.
\end{equation}

 \textbf{Step 2.} We show that $\displaystyle\lim_{t\to\infty}\Big[h(t)-\int_0^tk^*(s)ds\Big]=h^*:=L^*-2Q$.

 From Step 1, we know that along the special sequence $\{t_n\}$ used  in Lemma \ref{lem-lim}, which  satisfies
 \[\lim_{n\to\infty}\left[h(t_n)-\int_0^{t_n}k^*(s)ds+2Q\right]=\liminf_{t\to\infty}\left[h(t)-\int_0^{t}k^*(s)ds+2Q\right]=L^*,\]
 we have   
\begin{equation}
  \lim_{n\to\infty}[h'(t_n)-k^*(t_n)]=0,
\end{equation}
and \eqref{eq-limup} holds.
Assume to the contrary that the desired  conclusion does not hold. 
Since 
\[\lim_{n\to\infty}\left[h(t_n)-\int_0^{t_n}k^*(s)ds\right]=\liminf_{t\to\infty}\left[h(t_n)-\int_0^{t}k^*(s)ds\right]=h^*,\]
 we can find a sequence $\{s_n\}$ such that $s_n\to \infty$ as $n\to\infty$ and 
\[\lim_{n\to\infty}\left[h(s_n)-\int_0^{s_n}k^*(s)ds\right]=\limsup_{t\to\infty}\left[h(t)-\int_0^{t}k^*(s)ds\right]=\hat{h}>h^*.\]
To derive a contradiction, we revisit the upper solution constructed in \eqref{eq-ups} and refine the parameters there by using \eqref{eq-limup}. 
Take 
\begin{equation}\label{parameters}
\mbox{$K_0=\gamma=(\hat{h}-h^*)/9$, $a\leq(\hat{h}-h^*)/9$ and $\tau^*=t_n$}
\end{equation}
 with $n\in\mathbb{N}$ to be determined later. Reviewing the proof of Lemma \ref{lemma-upp},  we can choose  $q_0\in(0,\min\{\eta,\frac{\delta-\delta_a}{\delta_a}\})$ small enough such that inequalities \eqref{eq-gq1}, \eqref{eq-gq2}, \eqref{eq-gq3} and \eqref{eq-gq4} hold, which implies that 
\begin{equation}\label{h-u}
\begin{cases}\bar{h}'(t)\geq -\frac{d}{\delta}\bar{u}_x(t,\bar{h}(t)) &\text{ for }t\geq t_n, \\
\bar{u}_t-d\bar{u}_{xx}-f(t,\bar{u})\geq 0 & \text{ for }t\geq t_n,\ x\in[g(t),\bar{h}(t)].
\end{cases}
 \end{equation}
Fix the above $q_0$. Now we show that $u(t_n,x)\leq \bar{u}(t_n,x)$ holds for  $x\in[g(t_n),h(t_n)]$ and all large $n$.
Since $\bar{h}(t_n)-x+z_\delta(t)\geq K_0>0$ for $x\in[g(t_n),h(t_n)]$, we have 
$$\bar{\Phi} (t_n,\bar{h}(t_n)-x+z_\delta(t))=\Phi^*(t_n,\bar{h}(t_n)-x+z_\delta(t_n))\geq \Phi^*(t_n,K_0)>\delta \text{ for }x\in[g(t_n),h(t_n)].$$
Thanks to \eqref{eq-limup},  there exists $n_1\in\mathbb{N}$ large enough such that for all $n\geq n_1$,
\begin{eqnarray*}
  u(t_n,x)- \Phi^*(t_n, h(t_n)-x)<q_0   \text{ for }x\in[0,h(t_n)].
\end{eqnarray*}
This, combined with the monotonicity of $\Phi^*$ and the fact that $\bar{h}(t_n)\geq h(t_n)+a\geq h(t_n)+z_\delta(t_n)$, implies that 
\begin{eqnarray*}
  u(t_n,x)< \Phi^*(t_n, \bar{h}(t_n)-x+z_\delta(t_n))+q_0 \text{ for }x\in[0,h(t_n)],\ n\geq n_1.
\end{eqnarray*}
Moreover, it is clear that  $\bar{h}(t_n)-x\geq h(t_n)+a$ for $g(t_n)\leq x\leq 0$. Due to $h(t)\to\infty$  as $t\to\infty$ and $\limsup_{t\to\infty}u(t,x)\leq 1$ uniformly in $x\in\mathbb{R}$  (by Proposition \ref{prop-summ}), we can increase $n_1$ if necessary to obtain 
\[ \Phi^*(t_{n},\bar{h}(t_{n})-x+z_\delta(t_{n}))+q_0\geq\Phi^*(t_{n},h(t_{n}))+q_0\geq  1+\frac{q_0}{2}\geq u(t_{n},x)\]
for $g(t_{n})\leq x\leq 0$ and $n\geq n_1$. 
As a result, we obtain 
$$u(t_{n},x)\leq \bar{u}(t_{n},x) \text{ for }x\in[g(t_n),h(t_n)],\ n\geq n_1.$$
It is evident from the definition of $\bar u$ that $\bar u(t, g(t))>\delta$. Hence recalling \eqref{h-u} we see that $(\bar{u},g, \bar{h})$ with $\tau^*=t_n$ is an upper solution of \eqref{free-bound} for  $t\geq t_{n}$, $n\geq n_1$, and by the comparison principle it follows that,  for such $t$ and $n$,
\[h(t)\leq \bar{h}(t)=\int_{t_{n}}^tk^*(s)ds+\gamma(1-e^{-\sigma_0(t-t_{n})})+h(t_{n})+a+K_0.\]
Hence, for all large $k$ satisfying $s_k>t_n$, we have
\[h(s_k)\leq h(t_{n})+\int^{s_k}_{t_{n}}k^*(s)ds+\gamma(1-e^{-\sigma_0(s_k-t_{n})})+a+K_0,\]
or equivalently,
\[h(s_k)-\int_0^{s_k}k^*(s)ds\leq h(t_{n})-\int_0^{t_{n}}k^*(s)ds+\gamma(1-e^{-\sigma_0(s_k-t_{n})})+a+K_0.\]
Letting $k\to\infty$ and then $n\to\infty$ in the above inequality, we obtain by recalling \eqref{parameters}, 
\[\hat{h}-h^*\leq \gamma+a+K_0\leq (\hat{h}-h^*)/3.\]
This is a contradiction, and Step 2 is completed.

\textbf{Step 3.}  We prove that $\lim_{t\to\infty} \sup_{x\in[0,h(t)]}|u(t,x)-\Phi^*(t,h(t)-x)|=0$.

By Step 2  for any given  sequence $\{t_n\}$ satisfying $t_n\to\infty$ as $n\to\infty$, we have
$$\lim_{n\to\infty}\rho(t_n)=\liminf_{t\to\infty}\rho(t).$$
This allows us to use Lemma \ref{lem-lim} to conclude that \eqref{eq-limup} holds for some subsequence of  $\{t_n\}$. The arbitrariness of  $\{t_n\}$ thus implies 
$$\lim_{t\to\infty} \sup_{x\in[0,h(t)]}|u(t,x)-\Phi^*(t,h(t)-x)|=0.$$

Symmetrically, we can prove 
 \begin{equation*}
  \lim_{t\to\infty}\sup_{x\in[g(t),0]}|u(t,x)-\Phi^*(t,x-g(t))|=0.
 \end{equation*}
 The proof   is now complete.\hfill $\Box$
\section*{Acknowledgments} The research of Du and Ma was supported by the Australian Research Council.   Wang's research was supported by NNSF of China (12471164,12071193).

\medskip

\noindent
{\bf Declarations:} \begin{enumerate}
\item There is no conflict of interest associated with this paper.
\item This paper has no associated data.

\end{enumerate}

\bibliographystyle{plain}

\end{document}